\tikzset{toarrow/.style={{|[scale=0.7]}-{>[scale=0.7]}},
    backarrow/.style={{<[scale=0.7]}-{|[scale=0.7]}}}
\lbrace\begin{array}{@{}l@{}}}%
\newcommand{\R}{\mathbb{R}}
\newcommand{\Q}{\mathbb{Q}}
\newcommand{\N}{\mathbb{N}}
\newcommand{\Z}{\mathbb{Z}}
\newcommand{\C}{\mathbb{C}}
\theoremstyle{plain}
\newtheorem{theorem}{Theorem}[section]
\newtheorem{lemma}[theorem]{Lemma}
\newtheorem{prop}[theorem]{Proposition}
\newtheorem{cor}[theorem]{Corollary}
\newtheorem*{theorem*}{Theorem}
\theoremstyle{definition}
\newtheorem{defi}[theorem]{Definition}
\theoremstyle{remark}
\newtheorem{ex}[theorem]{Example}
\newtheorem{rem}[theorem]{Remark}
\newcommand{\bigslant}[2]{{\raisebox{.4em}{$#1$}\left/\raisebox{-.4em}{$#2$}\right.}}
\newcommand\restr[2]{{
  \left.\kern-\nulldelimiterspace 
  #1 
  \littletaller 
  \right|_{#2} 
  }}
\newcommand{\littletaller}{\mathchoice{\vphantom{\big|}}{}{}{}}
\author{
  Pedrotti, Riccardo\\
  \texttt{pedrotti.riccardo@math.utexas.edu}
 }
\title{Fixed point Floer cohomology of disjoint Dehn twists on a $w^+$-monotone manifold with rational symplectic form}
\begin{document}
\maketitle
\begin{abstract}
In this paper we give an explicit description of the Floer cohomology of a composition of Dehn twists $\tau$ about disjoint Lagrangian spheres in a $w^+$-monotone symplectic manifold whose symplectic class  $[\omega]$ admits a rational representative.  To do so,  we generalize the approach developed in \cite{Sei96} and \cite{Gau03} and apply it to the \textit{modified} Floer cohomology groups defined by K. Ono in \cite{Ono95} which can be shown to be isomorphic to the standard ones.  As a byproduct of this new framework,  in a monotone manifold, we are able to prove that a certain class in the fixed point Floer cohomology group of a single Dehn twist $\tau_V$ has to vanish. This class, which counts pseudo-holomorphic half-strips bound to the Lagrangian sphere $V$, plays a role in a new geometric proof of the exactness of the triangle P. Seidel defined in \cite{Sei03} which is the subject of subsequent ongoing work.
\end{abstract}

\section{Introduction}
Let $(M,\omega)$ be a $w^+$-monotone rational symplectic closed manifold. In this paper we are interested in computing the fixed point Floer cohomology for a composition of Dehn twists $\tau$ about disjoint Lagrangian spheres in $M$.  In \cite{Sei97},  P.  Seidel studied them extensively and used them to exhibit an example of a symplectomorphism that is smoothly isotopic to the identity but not symplectically isotopic to it. In there, the fixed point Floer homology of the Dehn twist in a (weakly-monotone) $4$-dimensional symplectic manifold is computed as an abstract quotient of the quantum cohomology ring $QM^*(M,\omega)$ (\cite[Theorem 3.5]{Sei97}), leveraging a certain exact sequence (\cite[Theorem 16.1]{Sei97}) which was then  greatly generalised (\cite{Sei03},\cite{Sei08}) in what is now known as the Seidel's long exact sequence/triangle. In particular, in his thesis, P. Seidel put a lot of emphasis to the quantum module structure of the groups involved which turned out to be crucial for his result.\\ Under the assumption that the \textit{PSS} isomorphism for $HF^*(V,V)$ can be defined, our main result then follows from the existence of such triangle. Usually this involves requiring that $\pi_2(M,V)=0$, and it is no longer true under the more general hypothesis of monotonicity of $V$. For an overview on this issue, see \cite{Alb08}.\\
Since we are not interested in the quantum module structure of $HF^*(\tau_V)$, our approach sidesteps the need of invoking the Lagrangian \textit{PSS} map and provides an explicit isomorphism between $HF^*(\tau_V)$ and $H^*(M,V)$ without relying on Seidel's exact triangle for fixed point Floer cohomology (see \cite[Theorem 4.2 and Example 4.3]{Sei01}). Other than not specialising specifically to $4$-dimensional manifolds, this is the main difference between our approach and the one in \cite{Sei97}, and it is based on the observation that if one can rule out certain pseudo-holomorphic strips, then the fixed point Floer cochain complex of $\tau$ naturally coincides with a cochain complex computing the relative Morse cohomology of a pair of subspaces of $M$.\\
The assumption on $M$ being $w^+$-monotone is there to avoid using virtual techniques, and it is automatically satisfied for manifolds of dimension at most $4$ for example,  while the rationality of the symplectic form is used to prove invariance under continuation maps of the \textit{modified} Floer cohomology introduced in \cite{Ono95}. While in the Hamiltonian setting we are allowed to perturb the symplectic form to achieve its rationality, in fixed point Floer cohomology we must make sure that the given composition of twists $\tau$ remains a symplectomorphism after the perturbation. If the dimension of $M$ is at least $6$, we show that this can be done and therefore the rationality assumption on $[\omega]$ can be safely dropped.\\
In subsequent work we plan on using this framework to give a more geometric construction of the exact triangle and use it to obtain a combinatorial formula to count certain pseudo-holomorphic sections.  As a necessary technical step to do so, one has to study the moduli space of pseudo-holomorphic half-strips bound to a framed Lagrangian sphere $V$. We define a cocycle $c \in CF^*\left(\tau_V; \Lambda_{\omega}\right)$ in terms of a count of elements of such moduli space and exploit our framework to prove it is null-cohomologous.\\

The main results of this paper are the following:
\begin{theorem}\label{theorem 1 case 2}
Let $(M,\omega)$ be a closed $w^+$-monotone symplectic manifold of dimension $2n\geq 4$ such that the symplectic class $[\omega]$ admits a rational representative.  Let $V_1,\dots, V_l,\dots, V_m$ be pairwise disjoint framed Lagrangian spheres and set $\tau:=\tau_{V_1}^{\sigma_1}\cdots\tau_{V_l}^{\sigma_l}\cdots \tau_{V_m}^{\sigma_m}$,  where $\sigma_l=\pm 1$ for all $l$.  Let $C_+$ (resp. $C_-$) be the union of all $V_i$'s such that $\sigma_l=+1$ (resp. $\sigma_l=-1$), 
then
\begin{equation*}
    HF^{k}(\tau; \Lambda_{\omega}) \cong  \bigoplus_{j=k \pmod{2}} H^{j}\left(M \setminus C_-, C_+; \Lambda_{\omega}\right)
\end{equation*}
where the Floer cohomology of $\tau$ is $\Z_2$-graded and $\Lambda_{\omega}$ is the Novikov field associated to the symplectomorphism $\tau : M \to M$.  The isomorphism is (up to a shift) of $\Z_2$-graded $\Lambda_{\omega}$-vector spaces.
\end{theorem}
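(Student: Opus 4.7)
The approach adapts and generalizes the Morse-theoretic computation of fixed point Floer cohomology due to \cite{Sei96} and \cite{Gau03}, now carried out within Ono's modified Floer cohomology in order to bypass virtual perturbations. The first step is to put each $\tau_{V_l}^{\sigma_l}$ into an explicit local form on a Weinstein neighborhood of $V_l$, and to choose a Morse function $f:M\to\R$ adapted to this data: generic on $M\setminus (C_+\cup C_-)$, minimised (up to a suitable shift) along each $V_l\subset C_+$, and arranged near $C_-$ so that the composition $\tau_{V_l}^{-1}\circ\phi^1_{\epsilon f}$ has no fixed points in a small neighborhood of $V_l$. Setting $\phi:=\tau\circ\phi^1_{\epsilon f}$ for a sufficiently small $\epsilon>0$, the generators of $CF^*(\tau;\Lambda_\omega)$ are the fixed points of $\phi$.

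Next, I enumerate these fixed points and match them with generators of a relative Morse complex. Outside the Weinstein neighborhoods, $\tau=\mathrm{Id}$, so fixed points are critical points of $f$ in $M\setminus (C_-\cup C_+)$. Near each $V_l$ with $\sigma_l=+1$, the combination of the local Dehn twist model with $\phi^1_{\epsilon f}$ yields fixed points in bijection with the critical points of an auxiliary Morse function on $V_l$, their Conley--Zehnder indices shifted by $n-1$ relative to the Morse indices on $V_l$. Near each $V_l$ with $\sigma_l=-1$, the choice of $f$ eliminates fixed points altogether. Summing these contributions, the resulting generating set is in bijection with the generators of a relative Morse cochain complex $CM^*(M\setminus C_-, C_+;\Lambda_\omega)$ (up to the expected degree shift and reduction modulo $2$).

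The third and analytically most delicate step is the identification of the two differentials. Sphere bubbling is controlled by the $w^+$-monotonicity hypothesis, while the rationality of $[\omega]$ guarantees that the modified Floer cohomology of \cite{Ono95} is well-defined and invariant under the continuation maps needed to interpolate between the data chosen above and a degeneration limit. \emph{The main obstacle}, which is the technical heart of the argument, is the exclusion of Floer strips that enter a fixed neighborhood of some $V_l$ in a ``non-Morse'' way: for $\sigma_l=-1$ this requires a Gromov monotonicity argument together with the explicit form of $\tau_{V_l}^{-1}$ to produce an action barrier preventing strips from penetrating a neighborhood of $V_l$; for $\sigma_l=+1$, it requires an adaptation of the Seidel--Gautschi local analysis of strips in $T^*V_l$ to the $w^+$-monotone rational setting, carried out within the modified Floer framework so that the usual continuation invariance of Floer cohomology remains valid. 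Once these unwanted strips are ruled out, an adiabatic-limit argument identifies the remaining trajectories with gradient flow lines of $f$ and yields an isomorphism of cochain complexes $CF^*(\tau;\Lambda_\omega)\cong CM^*(M\setminus C_-,C_+;\Lambda_\omega)$, from which the stated $\Z_2$-graded isomorphism on cohomology follows.
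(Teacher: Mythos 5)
Your outline shares the paper's broad strategy (a Morse-type Hamiltonian perturbation, exclusion of strips near the spheres, modified Floer cohomology and rationality for continuation invariance), but two steps would fail as written. The first is the analytic heart. For $\sigma_l=-1$ you invoke Gromov monotonicity plus the explicit form of $\tau_{V_l}^{-1}$, and for $\sigma_l=+1$ an adaptation of the Seidel--Gautschi local analysis, to produce an ``action barrier'' that rules out strips entering a neighborhood of $V_l$, and you then claim a chain-level isomorphism $CF^*(\tau;\Lambda_\omega)\cong CM^*(M\setminus C_-,C_+;\Lambda_\omega)$. In dimension $2n\geq 4$ the Gautschi mechanism is unavailable: one cannot rescale $\omega$ on a neighborhood of $V_l$ without violating closedness, and since $M$ is only $w^+$-monotone there is no a priori energy bound on index-one strips, so a barrier of any \emph{fixed} size excludes nothing. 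The paper's replacement is an SFT-type neck-stretch of the almost complex structure along the sphere bundles $S(V_l)$ (keeping $\omega$ fixed), which yields only lower bounds $\lambda_i\to\infty$ for strips crossing the neck of the $i$-th stretched structure $J^{\nu_i}$; such strips are never excluded outright, only their contribution to differentials within an action window of width less than $\lambda_i$ is. Consequently the comparison with Morse theory is made window-by-window, via the low-energy PSS isomorphism on the relative (filtered-quotient) groups $HF^{\ast}_{(k,a_{k,i})}$, and one then needs: Ono's theorem that continuation maps preserve the energy filtration and are invertible on relative groups (this is where rationality of $[\omega]$ enters, beyond discreteness of the action spectrum), a diagonal inverse-limit argument over the stretching parameter, and the appendix identification of modified with standard Floer cohomology. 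None of these ingredients appears in your plan, and the chain-level isomorphism you assert is stronger than what this scheme can deliver.

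Second, your enumeration of generators near the spheres with $\sigma_l=+1$ is modeled on the surface case and is incorrect for $n\geq 2$: the model Dehn twist restricted to the zero section $V_l$ is the antipodal map, which is fixed-point free, so composing with a $C^2$-small Hamiltonian flow produces no fixed points near $V_l$ at all (for either sign of $\sigma_l$); there is no local family of fixed points in bijection with the critical points of an auxiliary Morse function on $V_l$, nor the attendant index shift by $n-1$. In the paper all generators lie in $M'=M\setminus\bigcup_l T^*_{<\delta}(S^n)_l$, and the relative group $H^{\ast}(M\setminus C_-,C_+;\Lambda_\omega)$ arises from the boundary behaviour of the Morse function $h'$ on $M'$ (gradient pointing inward or outward along $\partial M'_l$ according to $\sigma_l$), not from generators sitting on $C_+$. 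Unless you redesign the perturbation and redo the index and action computations accordingly, the complex you describe does not have the generators you claim, independently of the issues with the differential.
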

\begin{rem} By following \cite[Page 589]{DoSa94}, if $N$ is the Chern number of $M$, we can define a \textit{relative} $\Z_{2N}$-grading on $HF^*(\tau; \Lambda_{\omega})$ (i.e. defined up to a global shift). By choosing suitable conventions (\cite[Section 6]{HS95}) our isomorphism is easily seen to preserve this relative grading.\\ If $\dim M \geq 6$, we can drop the assumption on the rationality of $[\omega]$ since we show we can perturb $\omega$ to a make it rational while preserving the $V_l$'s as Lagrangian spheres.
\end{rem}
\begin{theorem}\label{theorem vanishing c}
Let $(M,\omega)$ be a closed monotone symplectic manifold of dimension $2n\geq 4$.  Let $V$ be a framed Lagrangian sphere and let $\tau_V^{\sigma}$, for $\sigma = \pm 1$,  be the Dehn twist (or its inverse) around it.  We can define a cocycle $c \in CF^*(\tau_V^{\sigma}; \Lambda_{\omega})$ by counting index $0$ pseudo-holomorphic half-strips bound to $V$.  It satisfies \[ [c]=0 \in HF^*(\tau_V^{\sigma}; \Lambda_{\omega}).\]
\end{theorem}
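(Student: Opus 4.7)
The plan is to deduce $[c]=0$ from Theorem~\ref{theorem 1 case 2} applied to the single twist $\tau = \tau_V^\sigma$ (the case $m=1$), by tracing $[c]$ through the explicit isomorphism it produces. Specialising that theorem gives
\[
\Phi_\sigma \,:\, HF^*(\tau_V^\sigma; \Lambda_\omega) \xrightarrow{\sim} \bigoplus_{j \equiv * \pmod 2} H^j(X_\sigma, Y_\sigma; \Lambda_\omega),
\]
with $(X_+,Y_+)=(M,V)$ and $(X_-,Y_-)=(M\setminus V, \varnothing)$. Crucially, the proof of that theorem realises the Floer cochain complex as a mapping cone whose summands are Morse-type complexes of $V$ and of $M\setminus V$, with cone differential encoding the geometric interaction between the two pieces.

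The first step is to locate $c$ in this mapping cone. I would partition the generators of $CF^*(\tau_V^\sigma)$ into those corresponding, under the Morse model, to critical points of a Morse function on $V$, versus those lying in $M\setminus V$. Evaluating $c$ on a generator of the latter type yields a moduli space of index $0$ half-strips with boundary on $V$ but asymptote far from $V$; a combination of the half-strip energy bound with the $w^+$-monotonicity assumption on $M$ should rule these out, so $c$ is concentrated in the $V$-summand of the cone.

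Having identified $c$ in this way, I would conclude via the long exact sequence of the cone. For $\sigma=+1$ this is the long exact sequence of the pair $(M,V)$, and the explicit half-strip count identifies the class that $c$ represents on $V$ with the restriction to $V$ of a globally defined Morse cocycle on $M$ (built from the closed-string analogue of the same count); by exactness this image vanishes in $H^*(M,V;\Lambda_\omega)$. For $\sigma=-1$ the $V$-summand of the cone contributes nothing to $H^*(M\setminus V;\Lambda_\omega)$, so any cocycle supported there is automatically null-cohomologous.

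The main technical obstacle is the explicit matching, in the first step, between the geometric half-strip count and its Morse incarnation in the $V$-summand. This requires a careful compactness/transversality analysis of the moduli space of index $0$ pseudo-holomorphic half-strips bound to $V$ within the modified-Floer framework of~\cite{Ono95}---so that the count is invariant in the given rational, $w^+$-monotone setting---together with a description of its boundary strata, from which the identification of $c$ with a specific cochain in the $V$-summand of the cone can be unambiguously read off.
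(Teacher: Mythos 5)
Your plan has two genuine gaps, one of hypotheses and one of mechanism. First, you propose to deduce the vanishing from Theorem~\ref{theorem 1 case 2}, but that theorem assumes $[\omega]$ admits a rational representative, an assumption that Theorem~\ref{theorem vanishing c} deliberately does not make (the paper stresses this right after the statement, and in dimension $4$ the rationality cannot be traded away by perturbing $\omega$). The actual proof is arranged precisely to avoid Ono's Theorem~\ref{thm 3.4 Ono continuation maps preserves filtration}: it works with an increasing family of Hamiltonians (Cor.~\ref{cor we can make Hamiltonians bigger}) so that continuation maps raise the action and hence descend to the filtered complexes without any rationality input. Second, your structural picture of the isomorphism is not the one the paper produces. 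There is no ``$V$-summand'' of $CF^*(\tau_V^\sigma)$: the perturbed twist $\tau_p^{\nu}$ has \emph{no} fixed points in a neighbourhood of $V$ (Remark~\ref{rem sign morse function}), so every generator lies in $M'$, far from $V$, and the identification with $H^*(M,V)$ (resp.\ $H^*(M\setminus V)$) comes from relative Morse cohomology of $h'$ on $M'$ with boundary conditions, not from a mapping cone with a $V$-piece. Consequently your step ``rule out half-strips with asymptote far from $V$ by the energy bound plus monotonicity, so $c$ is concentrated on $V$'' cannot work: all asymptotes are far from $V$, and for a fixed almost complex structure those counts are in general nonzero --- $c$ is a nontrivial cochain, and only its cohomology class vanishes.

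The mechanism the paper actually uses, and which is missing from your proposal, is the interplay between neck-stretching and the action filtration: Theorem~\ref{thm lower bound on energy for half-cylinders} forces every index-$0$ half-strip bound to $V$ to have energy at least $\lambda_i\to\infty$, so the cocycle $c_{\nu_i}$ lies in ever deeper filtration levels $F^i C_i^\bullet$ (Cor.~\ref{cor c-i lives in higher filtration steps}); combined with the naturality $[{\chi_{i,i+1}}_* c_{\nu_i}]=[c_{\nu_{i+1}}]$ (Lemma~\ref{lem c is natural}) and the identification of the modified with the standard Floer cohomology (Prop.~\ref{prop appendix isomorphism modified and standard floer cohomology}), this forces $[c_{\nu_0}]=0$. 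Your alternative ending --- identifying $c$ with the restriction of a ``closed-string analogue'' cocycle and invoking exactness of the pair $(M,V)$ --- is not substantiated: no such global cocycle is constructed, and no argument is given that the half-strip count is a restriction of it, so the exactness step has nothing to apply to. As written, the proposal would at best prove a weaker statement (with rationality of $[\omega]$ assumed) and even then the key identification step is unsupported.
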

Notice that we are not requiring $[\omega]$ to be rational for the second theorem but we impose a monotonicity condition on $M$.\\
We have an analogous statement in the case of a Riemann surface $(\Sigma,\omega)$:
\begin{theorem}\label{theorem vanishing c for surfaces}
Let $(\Sigma,\omega)$ be a closed Riemann surface of genus at least $2$.  Let $V$ be a framed non-contractible Lagrangian sphere and let $\tau_V^{\sigma}$, for $\sigma = \pm 1$,  be the Dehn twist (or its inverse) around it.  We can define a cocycle $c \in CF^*(\tau_V^{\sigma}; \Z_2)$ by counting index $0$ pseudo-holomorphic half-strips bound to $V$.  Then 
\[ [c]=0 \in HF^*(\tau_V^{\sigma}; \Z_2).\]
\end{theorem}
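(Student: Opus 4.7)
My plan is to carry out the argument of Theorem~\ref{theorem vanishing c} in the two-dimensional setting, using the low dimension to compensate for the failure of monotonicity on a genus $\geq 2$ surface. Since $(\Sigma,\omega)$ is not monotone, Theorem~\ref{theorem 1 case 2} does not apply off the shelf, so the first step is to verify that the underlying Floer-theoretic machinery can still be implemented (with $\Z_2$ coefficients) in this new setting. Two observations are crucial: (i) any non-constant $J$-holomorphic sphere from $\mathbb{C}P^1$ into a closed surface of genus $g \geq 2$ would violate the Riemann--Hurwitz formula, so sphere bubbling cannot occur in any of the relevant moduli spaces; (ii) since $V$ is non-contractible in $\Sigma$, the inclusion induces an injection on $\pi_1$, and combined with $\pi_2(\Sigma) = 0$ (by uniformisation, the universal cover is the upper half-plane) this yields $\pi_2(\Sigma, V) = 0$, ruling out disk bubbling along $V$. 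These two exclusions let one run Gromov compactness without monotonicity, so the moduli counts defining the differential and the cocycle $c$ are finite and the Floer complex is defined directly over $\Z_2$.

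Next, I would reproduce the identification underlying Theorem~\ref{theorem 1 case 2} in this surface context, obtaining an isomorphism between $HF^*(\tau_V^\sigma;\Z_2)$ and the relative cohomology $H^*(\Sigma \setminus C_-, C_+; \Z_2)$, where $C_\pm = V$ according to the sign of $\sigma$. Essentially nothing needs to be changed in that construction except substituting the bubbling-exclusion statements above for every appeal to $w^+$-monotonicity, and replacing the Novikov field $\Lambda_\omega$ by $\Z_2$ throughout. Under this identification, the half-strip cocycle $c$ is traced to an explicit geometric class on the target side, exactly as in the proof of Theorem~\ref{theorem vanishing c}.

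Finally, the class $[c]$ is shown to vanish by producing an explicit primitive. In analogy with the proof of Theorem~\ref{theorem vanishing c}, this primitive should be constructed as a count of index $-1$ parametrised half-strips in a suitable one-parameter family interpolating between two geometric data, whose Floer coboundary matches the count defining $c$; equivalently, one can read off the vanishing after transporting $c$ through the isomorphism of the previous paragraph and exhibiting a cobounding chain there. The main obstacle is the bookkeeping required to rerun the compactness and transversality arguments of Theorems~\ref{theorem 1 case 2} and~\ref{theorem vanishing c} in the non-monotone, two-dimensional setting: one must verify that every instance where monotonicity was invoked can indeed be replaced by the dimensional bubbling-exclusion above, and that the framework originally built over $\Lambda_\omega$ descends cleanly to $\Z_2$-coefficients here.
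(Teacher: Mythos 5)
Your preliminary observations are fine and match the paper: for genus $\geq 2$ one has $\pi_2(\Sigma)=0$ and $\pi_2(\Sigma,V)=0$ for a non-contractible $V$, which is exactly how Lemma \ref{lem Lagrangian sphere is monotone} handles the surface case, and $\Lambda_\omega\cong\Z_2$ here, so working over $\Z_2$ is legitimate. But the core of your argument --- the actual proof that $[c]=0$ --- is missing, and what you substitute for it does not work as described. First, you mischaracterize the proof of Theorem \ref{theorem vanishing c}: that proof never ``traces $c$ to an explicit geometric class'' under the isomorphism of Theorem \ref{theorem 1 case 2}, and it never exhibits an explicit primitive for $c$. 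The parametrized moduli space of half-strips over a one-parameter family of data appears only in Lemma \ref{lem c is natural}, where it proves \emph{naturality}, $\chi_{i,i+1*}[c_{\nu_i}]=[c_{\nu_{i+1}}]$; a cobordism of that type can only show that $[c]$ is independent of the auxiliary choices, not that it vanishes. Likewise, knowing the abstract isomorphism $HF^*(\tau_V^\sigma;\Z_2)\cong H^*(\Sigma\setminus C_-,C_+;\Z_2)$ tells you nothing about where $[c]$ lands, so ``exhibiting a cobounding chain there'' is not a proof step but a restatement of the claim.

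The missing mechanism is the neck-stretching/filtration argument, which is the whole point of the paper's proof: one defines $c_{\nu_i}$ for a sequence of stretched almost complex structures, proves the lower bound $E(u^i)\geq\lambda_i\to+\infty$ for half-strips bound to $V$ (Theorem \ref{thm lower bound on energy for half-cylinders}), uses increasing Hamiltonians (Cor. \ref{cor we can make Hamiltonians bigger}) so that continuation maps respect the action filtration without assuming $[\omega]$ rational, deduces that $c_{\nu_i}$ lies in ever deeper filtration levels (Cor. \ref{cor c-i lives in higher filtration steps}), hence represents $0$ in the limit of relative groups, and finally transfers this back to ordinary Floer cohomology via Prop. \ref{prop appendix isomorphism modified and standard floer cohomology} and the invertibility of continuation maps, using Lemma \ref{lem c is natural}. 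None of this appears in your outline. In the surface case the argument even simplifies (since $\Lambda_\omega\cong\Z_2$ the action spectrum of the fixed points, all lying in $M'$, is a fixed finite set, so the energy bound forces the index~$0$ half-strip moduli spaces to be \emph{empty} for large stretching parameter, and naturality alone then gives $[c_{\nu_0}]=[c_{\nu_i}]=0$), but some version of the energy lower bound from stretching the neck around $V$ is indispensable; without it your proposal has no route from the definition of $c$ to its vanishing. A secondary inaccuracy: the obstruction to quoting Theorem \ref{theorem 1 case 2} for surfaces is its dimension hypothesis $2n\geq 4$, not a failure of $w^+$-monotonicity (which is vacuous in dimension $2$), and in any case that theorem is not what drives the vanishing of $[c]$.
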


As a corollary of Theorem \ref{theorem 1 case 2},  we have a quick proof of a well-known fact proved in \cite{FOOO10} or \cite{Alb05} in our particular setting:
\begin{cor}
Let $V$ be a Lagrangian sphere in a $w^+$-monotone closed manifold $M$ of dimension $2n\geq 6$.  If $[V]\neq 0$ in $H_n(M;\Z_2)$,  then $V$ cannot be displaced via an Hamiltonian isotopy.
\end{cor}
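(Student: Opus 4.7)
The plan is to argue the contrapositive: assume $V$ is Hamiltonianly displaceable, and deduce $[V] = 0$ in $H_n(M; \Z_2)$. Pick a Hamiltonian diffeomorphism $\phi$ with $\phi(V) \cap V = \emptyset$ and set $V' := \phi(V)$. By naturality of the Dehn twist under symplectomorphisms, $\tau_{V'} = \phi\,\tau_V\,\phi^{-1}$, so
\[ \tau \;:=\; \tau_V \circ \tau_{V'}^{-1} \;=\; \tau_V\,\phi\,\tau_V^{-1}\,\phi^{-1}. \]
If $\{\phi_s\}_{s\in[0,1]}$ is a Hamiltonian isotopy realising $\phi$, then $s \mapsto \tau_V\,\phi_s\,\tau_V^{-1}\,\phi_s^{-1}$ is a Hamiltonian isotopy from $\mathrm{id}$ to $\tau$ (conjugation by the symplectomorphism $\tau_V$ sends Hamiltonian isotopies to Hamiltonian isotopies, and composition preserves the class). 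Therefore $\tau$ is Hamiltonian isotopic to the identity, and by invariance of fixed-point Floer cohomology under Hamiltonian isotopy together with the standard identification for the identity,
\[ HF^*(\tau;\Lambda_\omega) \;\cong\; HF^*(\mathrm{id};\Lambda_\omega) \;\cong\; H^*(M;\Lambda_\omega) \]
as $\Z_2$-graded $\Lambda_\omega$-vector spaces.

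Since $V$ and $V'$ are disjoint, Theorem~\ref{theorem 1 case 2} applied to $\tau$ with $C_+ = V$ and $C_- = V'$ (using $\dim M \geq 6$ to drop the rationality hypothesis on $[\omega]$) gives
\[ HF^k(\tau;\Lambda_\omega) \;\cong\; \bigoplus_{j \equiv k \pmod 2} H^j(M \setminus V',\, V;\Lambda_\omega). \]
A possible parity shift in the isomorphism is invisible to the total $\Lambda_\omega$-dimension, so comparing with the previous paragraph yields
\[ \sum_j \dim_{\Z_2} H^j(M;\Z_2) \;=\; \sum_j \dim_{\Z_2} H^j(M \setminus V',\, V;\Z_2). \]

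The last step is a purely cohomological computation showing that this equality forces $[V] = 0$. I would combine the Gysin long exact sequence of the codimension-$n$ closed embedding $V' \hookrightarrow M$, whose degree-$0$ Gysin map $H^0(V') \to H^n(M)$ sends $1$ to $\mathrm{PD}[V']$, with the long exact sequence of the pair $(M \setminus V',\,V)$, using both that $[V'] = [V]$ in $H_n(M;\Z_2)$ (since $\phi$ is isotopic to $\mathrm{id}$) and that the mod-$2$ self-intersection of a Lagrangian $n$-sphere vanishes, so $\mathrm{PD}[V']$ restricts to zero on $V$. A degree-by-degree trace then shows that when $[V] \neq 0$ the total $\Z_2$-dimension of $H^*(M \setminus V',\,V;\Z_2)$ is exactly $4$ less than that of $H^*(M;\Z_2)$: one unit is lost at degree $0$, two at degree $n$ (because $\mathrm{PD}[V']$ becomes zero in $H^n(M \setminus V')$ and the induced restriction $H^n(M \setminus V') \to H^n(V)$ is surjective), and one at degree $2n$. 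This is incompatible with the equality above, so $[V] = 0$.

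The main obstacle is the careful bookkeeping of the two long exact sequences in the degree range $\{n-1,\,n,\,n+1\}$, where the interaction between the Thom class of $V'$ and the restriction to $V$ determines whether each cohomology class is preserved or killed; outside this range the two sequences reduce to the identifications $H^k(M \setminus V',\,V;\Z_2) \cong H^k(M;\Z_2)$. The rest of the proof is a direct application of Theorem~\ref{theorem 1 case 2} together with the standard invariance properties of fixed-point Floer cohomology.
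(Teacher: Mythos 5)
Your proposal is correct and follows essentially the same route as the paper: displaceability gives $\tau_V\circ\tau_{V'}^{-1}$ Hamiltonian isotopic to the identity, so $HF^*(\tau_V\circ\tau_{V'}^{-1};\Lambda_\omega)\cong H^*(M;\Lambda_\omega)$, which is then played off against the isomorphism $HF^*\cong H^*(M\setminus V',V;\Lambda_\omega)$ from Theorem \ref{theorem 1 case 2} (with the rationality hypothesis dropped since $\dim M\geq 6$). The Gysin-plus-pair-sequence bookkeeping you carry out, giving a total $\Z_2$-dimension drop of exactly $4$ when $[V]\neq 0$ (degrees $0$, $n$ twice, and $2n$), is precisely the ``standard computation with $\Z_2$-coefficients'' that the paper leaves implicit, and your count is accurate.
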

The corollary is a direct consequence of the fact that if $V$ could be displaced to a disjoint Lagrangian sphere $V'$ via a Lagrangian isotopy,  then 
\[HF^{\bullet}(\tau_V\circ \tau_{V'}^{-1}; \Lambda_{\omega})\cong HF^{\bullet}(\text{Id}_M; \Lambda_{\omega}).\] On the other hand, a standard computation with $\Z_2$-coefficients cohomology shows that,  as $\Z_2$-graded $\Z_2$-vector spaces \[{H^{\bullet}(M\setminus V',V; \Z_2)\not\cong H^{\bullet}(M; \Z_2)},\] which implies
\[H^{\bullet}(M\setminus V',V; \Z_2)\otimes_{\Z_2} \Lambda_{\omega}\not\cong H^{\bullet}(M; \Z_2)\otimes_{\Z_2} \Lambda_{\omega}\]
contradicting Theorem \ref{theorem 1 case 2}.\\

The proof of the Theorem \ref{theorem 1 case 2} roughly follows the reasoning done in \cite{Sei96} with some key differences due to the different setting we are working on.  Without a (strong) monotonicity of $M$,  there are no obvious \textit{a priori} upper-bounds on the energy of low-index solutions of the Floer-Cauchy-Riemann equations.  For this reason we construct an energy filtration on $CF^{\bullet}(\tau)$ and prove the theorem for each subcomplex of the filtration.  An analogue filtration was intensively studied in \cite{Ono95} in order to define the so called \textit{modified} Floer cohomology and in \cite{FOOO10} in order to construct a spectral sequence for Hamiltonian Floer cohomology.\\
By construction,  at each step of the filtration we have an \textit{a priori} bound on the energy of Floer trajectories realizing nonzero differentials.  Thanks to those bounds we can exploit a similar idea from \cite{Gau03},  where the author proves that by arbitrarily rescaling the symplectic form in a tubular neighborhood of certain circles,  pseudo-holomorphic strips passing through them must have arbitrarily high lower-bounds on their energy.  Since we are working in dimension greater than $2$,  we cannot perturb the symplectic form on certain compact neighborhoods of our Lagrangian sphere since that would violate the closedness of our symplectic form. Instead we used a neck-stretching technique borrowed from symplectic field theory.  Thanks to that we created a family of compatible almost complex structures on $(M, \omega)$ (together with a family of induced metrics) which force the energy of pseudo-holomorphic strips passing through these neighborhoods to be arbitrarily high. 
In this way,  trajectories that go through the neck of the Lagrangian spheres $V_i$'s will not contribute to differentials on certain levels of the filtration.  A diagonal argument, together with some technical results,  is then needed to show that such reasoning can be pushed to the limit in order to retrieve Theorem \ref{theorem 1 case 2} for the \textit{modified} flavour of fixed point Floer cohomology.  In the appendix, we included an adaption of unpublished work of K. Ono which shows that the \textit{modified} Floer cohomology theory computed using this energy filtration is canonically isomorphic to the standard one. \\
The last part of this paper is dedicated to adapt our neck-stretching and filtration techniques to the case of half-strips that concerns Theorem \ref{theorem vanishing c} and its version for surfaces.  Most notably,  we noted that no additional requirement on the symplectic form is required other than $M$ being monotone on spheres is necessary.  
\section*{Acknowledgments}
First and foremost I would like to thank my advisor,  Timothy Perutz,  for his continued support and guidance.  After explaining to him the main ideas of an early version of this work, he pushed me to try to generalize it to a broader class of manifolds, leading to the current statement of the main results.  I am also very grateful to Kaoru Ono,  for his careful explanation of his work involving modified Floer cohomology,  the isomorphism with the standard Floer cohomology and other technicalities.  I want to thank my former colleagues Arun Debray and Ivan Tulli for their helpful comments on preliminary ideas on this project.  Lastly,  I want to express my gratitude to the anonymous referee for their meticulous work on this manuscript and for pointing out some gaps in the original version of this article.
\newpage
\tableofcontents

\section{Setting up the Floer cohomology groups}\label{setting up the floer cohomology groups}
The definition of the Floer cohomology groups for a symplectomorphism 	\[\phi : (M,\omega) \to (M,\omega)\] is by now well understood and can be done in rather great generality.  Nowadays there are two main approaches one can take in order to set up the machinery: the Morse-Novikov homology of the twisted loop space or the symplectic mapping torus and horizontal sections viewpoint. The former was historically the first one developed since it is closely related to the classical Hamiltonian Floer (co)homology construction (see for example \cite{DoSa94} and \cite{Lee05}).  The (co)chain complex is generated by the critical points of the action functional defined on some covering of the twisted loop space while the differentials are represented by the gradient flow lines of such function. \\ The definition of Floer cohomology in terms of horizontal sections of the symplectic mapping torus and pseudo-holomorphic sections of a Lefschetz fibration is due to P.  Seidel and has the benefit of being very geometric (see \cite{Sei97} for the general construction).  The (co)chain complex is generated by horizontal sections of the mapping torus $M_{\phi}$ and the differential is given by counts of pseudoholomorphic sections of the product bundle 
\[ E = \R \times M_{\phi} \to \R \times S^1.\] 
For the purpose of this paper we decided to follow the approach in \cite{Lee05} due to our necessity of working with the action functional to define our energy filtration on the cochain complex.  We summarize the main steps of the construction here:\\
To avoid the use of virtual techniques to establish compactness of the relevant moduli spaces,  invariance under change of Hamiltonians and almost complex structure,  we restrict ourselves to $w^+$-monotone symplectic manifolds.  For the convenience of the reader, we recall here the definition of such spaces.
\begin{defi}\label{def w+ mflds} A symplectic manifold $(M,\omega)$ is said to be $w^+$-\textit{monotone} if $\omega(A)>0$ for all $A\in H_2(M)$ in the image of $\pi_2(M)$ under the Hurewicz map satisfying $0 < \langle c_1(M), A \rangle \leq n-2$
\end{defi}
\begin{rem}	Clearly the definition of $w^+$-monotonicity implies that $M$ is weakly monotone in the usual sense of \cite{HS95}.  Examples of $w^+$-monotone manifolds include any symplectic manifold $M$ such that $\dim M \leq 4$, Fano and Calabi-Yau manifolds.
\end{rem}

Let
\begin{equation}\label{eq def twisted loop space}
    \Omega_{\phi} = \{ \gamma : \R \to M \mid \phi\gamma(t+1)=\gamma(t)\}
\end{equation}
be the twisted loop space, let $H: \R\times M \to \R$ be a time dependent Hamiltonian satisfying
\[H_{t+1}=H_t\circ \phi\]
and consider the closed $1$-form $a_H$ defined on it 
\begin{equation*}\label{eq differential action functional}
{a_H}_{\gamma}(\xi) := \int_0^1 \omega(\dot{\gamma}-X_H(\gamma),\xi)dt.
\end{equation*}
We consider (a connected component of) the Novikov cover of $\Omega_{\phi}$ which can be explicitly described as the following quotient.  Let $\gamma_0 \in \Omega_{\phi}$ be a reference path, then:
\begin{equation*}
\widetilde{\Omega_{\phi}}:= \left\lbrace (\gamma,u) \mid u : [0,1]\times [0,1]\to M \ \text{s.t.  } \left\lvert \begin{array}{l}
     \phi u(s,1)= u(s,0)\\ u(0,t) =\gamma_0(t) \\ u(1,t)=\gamma(t)
\end{array}\right.\right\rbrace \bigg/ \raisebox{-.6em}{$\mathlarger{\mathlarger{\mathlarger{\mathlarger{\sim}}}}$}
\end{equation*}
where the equivalence relation is defined as follows: 
 \[ (\gamma,u)\sim (\gamma',u') \Leftrightarrow \gamma=\gamma' \ \text{ and } \int_{[0,1]^2}u^*\omega = \int_{[0,1]^2}{u'}^*\omega. \]
Equivalently, we can characterize $\widetilde{\Omega_{\phi}}$ as the cover associated to the kernel of the evaluation homomorphism 
\[ \text{ev}_{\omega} : \pi_1\left(\Omega_{\phi},\gamma_0\right) \to \R\] 
defined as follows. Let $\left[\omega_{\phi}\right]$ be the class in $H^2(M_{\phi};\R)$ induced by $[\omega]$ using the fact that $\phi$ is a symplectomorphism. Each $1$-cycle $\zeta : S^1 \to \Omega_{\phi}$ representing a homotopy class can be thought as a map $\zeta: S^1\times S^1 \to M_{\phi}$, then by setting
\[\text{ev}_{\omega}(\zeta) :=\langle \left[\omega_{\phi}\right], \zeta_*\left[S^1\times S^1\right]\rangle\]
we have the required map.\\
The differential $1$-form $a_H$ introduced earlier pulls back to an exact form on $\widetilde{\Omega_{\phi}}$ as the differential of the \textit{action functional} $\mathcal{A}_{H,\gamma_0}$:
\begin{align*}
\mathcal{A}_{H,\gamma_0}([\gamma,u]) = \int_{[0,1]^2}u^*\omega + \int_0^1 H\circ \gamma(t) dt
\end{align*}
whose set of critical points $\text{Crit}(\mathcal{A}_{H,\gamma_0})$ can be characterized as the set of elements elements $[(\gamma,u)] \in \widetilde{\Omega_{\phi}}$ satisfying the additional condition:
\begin{equation}\label{eq def of critical points}
\dot{\gamma}(t) =X_H(\gamma(t)).
\end{equation}
Let \[N :=\bigslant{\pi_1\left(\Omega_{\phi},\gamma_0\right)}{\ker \text{ev}_{\omega}}\] 
and consider the induced injective homomorphism induced by the evaluation 
\[ \overline{\text{ev}_{\omega}} : N  \to \R.\]
Following the convention in \cite{HS95},  we define the Novikov field \[ \Lambda_{\omega} := \text{Nov}\left(N, \overline{\text{ev}_{\omega}}, \Z_2 \right)\] as the upward completion of $N$ over $\Z_2$ with respect to the (weight) homomorphism $\overline{\text{ev}_{\omega}}$.\\
\begin{defi}\label{def floer cochain} The Floer cochain complex $CF^k(\phi; \Lambda_{\omega})$ is the $\Z_2$-graded free $\Lambda_{\omega}$-module generated by $\gamma \in \Omega_{\phi}$ satisfying Eq. \ref{eq def of critical points}.  The degree $k$ is given by the sign of $\det\left( 1-d_{\gamma(1)}\phi\circ L_{\gamma}\right)$ where $L_{\gamma} : T_{\gamma(0)}M\to T_{\gamma(1)}M$ is obtained by linearizing Eq. \ref{eq def of critical points} near $\gamma$.  For generic Hamiltonians,  such determinant is non-zero. 
\end{defi}
\begin{rem} For our purposes,  it is convenient to look at $CF^k(\phi; \Lambda_{\omega})$ from a different, although equivalent, perspective: as the $\Z_2$-vector space whose elements are generalised series 
\[ \sum \xi_{[\gamma,u]}\cdot [\gamma,u] \] 
where $\xi_{[\gamma,u]}\in \Z_2$, subject to the condition 
\[\left\lvert \left\lbrace [\gamma,u] \in \text{Crit}^k(\mathcal{A}_{H,\gamma_0}) \text{ s.t.  } \xi_{[\gamma,u]}\neq 0 ,  \mathcal{A}_{H,\gamma_0}([\gamma,u])\geq c \right\rbrace\right\rvert < \infty\] 
for every constant $c > 0$,  where $\text{Crit}^k(\mathcal{A}_{H,\gamma_0})$ is the subset of critical points whose degree is $k\pmod{2}$. A drawback of this point of view is that, in most of the cases, $CF^k(\phi; \Lambda_{\omega})$ is infinitely generated over $\Z_2$.
\end{rem}
 Let $\{J_t\}_t$ be a 1-parameter family of almost complex structures and let $H$ be an autonomous (i.e.  time-independent) Hamiltonian.  Consider the following system of equations
\begin{equation}\label{eq floer symplectomorphism} \begin{cases} &\partial_r u + J_t\partial_t u - \nabla H=0 \\ &\phi \circ u(r,t+1)= u(r,t)\\ &\lim\limits_{r\to \pm \infty} u(r,t)=u_{\pm}(t) \in \text{Crit}(\mathcal{A}_{H,\gamma_0}).\end{cases}
 \end{equation}
It is possible to incorporate the Hamiltonian perturbation inside the almost complex structure in order to obtain an equivalent system of equations by using the Hamiltonian flow of $\psi_t^H$ of the autonomous Hamiltonian $H$. 
\begin{equation}\label{eq floer perturbed symplectomorphism}
 \begin{cases} &\partial_r v + J'_t\partial_t v =0 \\ &\left(\phi \circ\psi_{1}^H\right)\circ v(r,t+1)= v(r,t)\\ &\lim\limits_{r\to \pm \infty} v(r,t)=x_{\pm} \in \text{Fix}(\phi \circ \psi_{1}^H)\end{cases} 
 \end{equation}
 via the identification \[ v(s,t):= (\psi_t^H)^{-1}u(s,t),\]
 where $\{J'_t\}_t$ is a family of almost complex structures that satisfies 
 \[J'_t={\psi_t^H}^*J_t := \left(d{\psi_t^H}\right)^{-1}J_td{\psi_t^H}.\]
 For full details see \cite{DoSa94}[Page 586] and \cite{Oh06}[Section 2.6], keeping in mind the different convention in the definition of the twisted loop space (Eq. \ref{eq def twisted loop space}) which means that we are working with $\phi^{-1}$ in their notation.\\
 To each such solution $u$ we can associate an integer number called the Maslov index $\mu(u)$.  The differentials, similarly to the Morse case,  are given by counting solutions of Eq. \ref{eq floer symplectomorphism} or equivalently Eq. \ref{eq floer perturbed symplectomorphism} with Maslov index $1$.  In order to have a meaningful count,  one has to equip $M$ with regular $1$-parameter families of almost complex structures and Hamiltonians.  In the autonomous case,  additional care is required: following \cite[Theorem 7.3]{HS95}, by taking a small enough autonomous Hamiltonian we can circumvent these transversality issues.  After quotienting out the obvious $\R$-translation action on the solutions,  thanks to $M$ being a $w^+$-monotone manifold we observe that the moduli space of solutions with Maslov index $0$ or $1$ can be compactified by adding broken strips.  This turns out to be essential in order to have $\partial^2=0$.\\ As mentioned at the beginning,  one can prove, using continuation maps,  that different choices of (admissible) Hamiltonians or (regular) almost complex structure gives rise to quasi-isomorphic cochain complexes.
\subsection{The neck-stretching procedure}
In this section we briefly recall the main notions involved in the neck-stretching procedure we are going to use in the proof of Theorem \ref{theorem 1 case 2}.  Classic references for this procedure are \cite{EGH00} (Example 1.3.1 directly applies to our setting) and \cite{BEHWZ03}.  We will closely follow the construction made in \cite[Section 5.2.1]{Eva10}. \\ 

Let $X$ be a contact-type codimension-$1$ submanifold of a symplectic manifold $(M,\omega)$: by definition $X$ comes with a choice of a Liouville vector field $\eta$, defined on a neighborhood $\mathcal{N}$ of $X$,  which is transverse to $X$. The stretching consists in finding a family of compatible almost complex structures on $M$ satisfying certain properties, which we will make precise in a second moment.  Roughly speaking,  the flow of $\eta$ defines an identification \[\mathcal{N}\cong(-\varepsilon,\varepsilon)\times X\] for some $\varepsilon>0$,  and by changing the almost complex structure on $\mathcal{N}$ we change the metric there effectively stretching the neck.  The remaining of this section will be devoted to make precise this idea. \\ Thanks to \cite[Prop. 3.58]{McDS17},  we can equip $X$ with a contact form \[\alpha :=\eta \righthalfcup \omega\] defining a contact structure which is customary to denote by $\xi=\ker \alpha$.  On $\mathcal{N}$ we can write $\omega=d(e^q\alpha)$, where $q$ is the coordinate in the first factor of $(-\varepsilon,\varepsilon)\times X$.  We denote with $\mathbf{R}$ the associated Reeb vector field on $X$,  which we can think of being defined on $\mathcal{N}$ thanks to the identification above.
\begin{defi}\label{def adjusted acs} 
With the notation above,  we say that an almost complex structure $J$ is $\eta$-adjusted (or just adjusted if the choice of Liouville vector field $\eta$ is not ambiguous) if it satisfies the following properties:
\begin{itemize}
\item On $\mathcal{N}$,  $\mathcal{L}_{\eta}J=0$.  In other words $J$ is $\R$-translation invariant, where the translation acts in the direction of $\eta$
\item $J(\eta)=\mathbf{R}$ and $\left.J\right|_{\xi}$ is a $d\alpha$-compatible almost complex structure on $\xi$.
\end{itemize}
\end{defi}
Note that an adjusted almost complex structure $J$ is automatically $d(e^q\alpha)$-compatible on $\mathcal{N}$. \\ Let $\mathcal{F}_s$ be the flow of $\eta$ at time $s$. Let $\nu\geq 0$ be any real number, using this flow we can \textit{stretch} the neighborhood $\mathcal{N}$ of $M$ with parameter $\nu$ as follows: let 
\begin{align*}
\Phi_{\nu} : I_{\nu} \times X &\to M\\
(s,m) &\mapsto \mathcal{F}_{\beta_{\nu}(s)}(m)
\end{align*}
where $I_{\nu} := (-\nu-\varepsilon,\nu+\varepsilon)$ and $\beta_{\nu} : I_{\nu} \to (-\varepsilon, \varepsilon)$ is a smooth strictly monotone increasing function that satisfies the following constraints: 
\[\beta_{\nu}(s) = \begin{cases} s+ \nu &\mbox{ if } s \in (-\nu-\varepsilon, -\nu-2/3\varepsilon] \\ \frac{\varepsilon}{2\nu + \varepsilon}s &\mbox{ if } s \in [-\nu -\varepsilon/2, \nu + \varepsilon/2] \\ s-\nu &\mbox{ if } s \in [\nu + 2/3\varepsilon, \nu + \varepsilon). \end{cases}\]
See Fig. \ref{fig:beta} for a possible candidate of $\beta_{\nu}$.
\begin{figure}
\begin{center}
\includegraphics[scale=1.5]{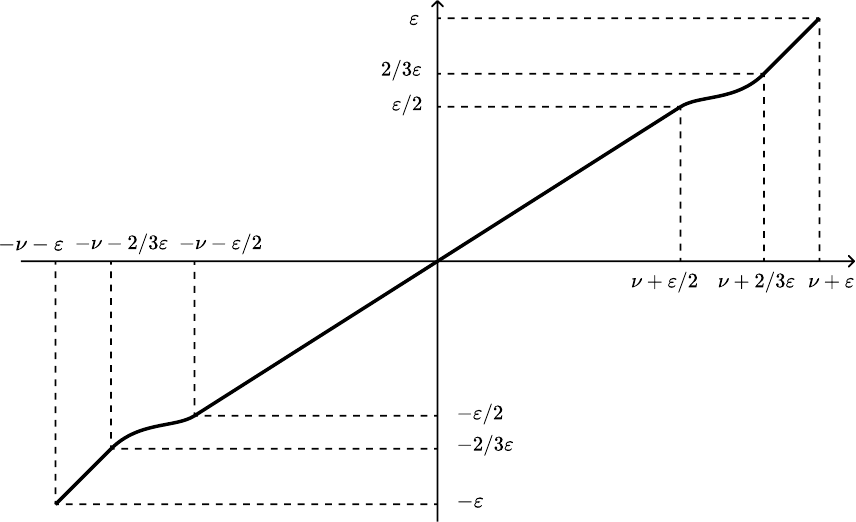}
\captionof{figure}{a possible candidate for $\beta_{\nu}$} \label{fig:beta}
\end{center}
\end{figure}

Let $\widetilde{J}^{\nu}$ be a $\eta$-adjusted almost complex structure on $I_{\nu}\times X$ such that it coincides with $J$ on the distribution $\xi$.  We reconstruct the manifold $M$ but with a new $\omega$-compatible almost complex structure as follows:
 \begin{equation*}
\left(M, J^{\nu}\right):= \left( M\setminus [-2/3\varepsilon,2/3\varepsilon]\times X, J\right)\bigsqcup_{\Phi_{\nu}} \left( I_{\nu}\times X ,  \widetilde{J}^{\nu}\right).
\end{equation*}
On the common overlap of the two pieces the differential of the map $\Phi_{\nu}$ is the identity.  Hence,  we can glue the two almost complex structures to form a global one which we denote with $J^{\nu}$.  The sequence $\{J^{\nu}\}_{\nu}$ of almost complex structures on $M$ is called the \textit{neck-stretch} of $J$ along $X$.  
\begin{rem}\label{rem exact form on neck} If we equip $I_{\nu} \times X$ with the symplectic form \[\omega_{\beta_{\nu}}:=d\left( e^{\beta_{\nu}(s)}\alpha\right)=\Phi_{\nu}^*\omega\] the map $\Phi_{\nu}$ becomes a symplectic embedding by definition.  Therefore we are deforming the almost complex structure $J \rightsquigarrow J^{\nu}$ on $\mathcal{N}$ while keeping the symplectic structure fixed, implying that we are changing the associated metric on $\mathcal{N}$ as well.  With the new induced metric the map $\Phi_{\nu}$ becomes an isometry on its image.
\end{rem}
Let us conclude this section by working out some concrete computations in the case that $M$ is a Riemann surface: thanks to our specific choice of coordinates, in the higher dimensional case we are interested in, the remaining $n-2$ coordinates will not interfere in the following observations.
\begin{ex}\label{ex stretched metric surface}
Let $V$ be a simple closed curve in a Riemann surface $\Sigma$.  By choosing an embedding $S^1 \to V$ we have a coordinate $z$ that describes all the points in $V$.  The contact form $dz$ on $TV$ satisfies the (trivial) identity 
\[ d(dz)=0=\left.\text{vol}_{\Sigma}\right|_{V},\] 
hence by \cite[Prop 3.58]{McDS17} we can find a Liouville vector field $\eta$ defined on a neighborhood $\mathcal{N} \subset \Sigma$ of $V$ transverse to $V$.  We can use its flow to construct the following trivialization: $\mathcal{N} \cong (-\varepsilon, \varepsilon)\times S^1$,  which let us assign to any given point $p \in \mathcal{N}$ coordinates $(q,z)$. \\ Now consider $(-\nu-\varepsilon, \nu+\varepsilon) \times S^1$ equipped with the symplectic form $\omega = d(e^{\beta_{\nu}(s)}dz)=e^{\beta_{\nu}(s)}\beta_{\nu}'(s)ds \wedge dz$,  where $dz$ is the canonical contact form of $S^1$, $z$ representing the angular coordinate on the circle.  As before,  we will use the letter $s$ to denote the coordinate in the Liouville direction in $I_{\nu}\times S^1$. \\
On $(-\nu-\varepsilon, \nu+\varepsilon) \times S^1$ we have the canonical a.c.s \[ \widetilde{J}^{\nu}_{(s,z)}= \begin{pmatrix}
0 & -1 \\ 1 & 0
\end{pmatrix}.\]
Let us now consider $({\Phi_{\nu}})_*J$ on $\mathcal{N}$,  and let $a,b \in \R$ be the components of a given tangent vector with respect to the basis given by $(\eta,\mathbf{R})$,  the Liouville and Reeb vector fields in $\mathcal{N}$.  Similarly to \cite[Lemma 5.2.8]{Eva10} we have,  for a vector $v=a\eta + b\mathbf{R}$ at the point $(q,z)$:
\[{\Phi_{\nu}}_* \widetilde{J}^{\nu}v = -\beta_{\nu}'\left(\beta_{\nu}^{-1}(q) \right)b\eta + \dfrac{1}{\beta_{\nu}'\left(\beta_{\nu}^{-1}(q)\right)}a\mathbf{R}.\]
Notice that $({\Phi_{\nu}})_*J$ is not adjusted to the sphere bundle of $V$,  for the Liouville field $\eta$ anymore,  but it's still cylindrical and still interchanges the \textit{directions} given by $\eta$ and $\mathbf{R}$.  This observation applies in any dimension.\\
Thanks to our choice of coordinates $(q,z)$,  the symplectic form can be written as $e^qdq\wedge dz$.  Then the associated metric becomes
\[ g_{\nu}(v,v) = \omega(v,(\Phi_{\nu})_* \widetilde{J}^{\nu}v)= e^q\left(a^2\dfrac{1}{\beta_{\nu}'\left(\beta_{\nu}^{-1}(q)\right)}+ b^2\beta_{\nu}'\left(\beta_{\nu}^{-1}(q) \right)\right),\] 
for the general case compare with \cite[Lemma 5.2.8]{Eva10}.  The last computation we want to spell out in details is an estimate of the geodesic length of the neck $\mathcal{N}$ with respect to the metric $g_{\nu}$.  More precisely,  we are interested in finding an equation of the geodesic of the form $\gamma_{\nu}(r)=(q(r),0)$ lying in $[ -\varepsilon/2,  \varepsilon/2]\times V$.  The geodesic equation boils down to 
\[ \dfrac{d^2q}{dr^2}+\Gamma^1_{1,1}\left(\dfrac{dq}{dr}\right)^2=0\] 
and thanks to the specific value of $\beta_{\nu}$ the Christoffel symbol (w.r.t.  $g_{\nu}$) can be easily computed: 
\[\Gamma^1_{1,1}= -\dfrac{1}{2}.\]
Therefore,  the geodesic is given by the following parametrization:
 \[ \gamma_{\nu}(r)=\left(\log(r^2),z_0\right)\]
 for a given $z_0\in V$.  For a given parameter $\nu$,  the speed of the geodesic must be constant and a quick computation using the metric $g_{\nu}$ reveals that 
 \[ \| \dot{\gamma_{\nu}}\|_{\nu} =2\sqrt{\dfrac{2\nu+\varepsilon}{\varepsilon}}.\]
A similar computation reveals that a solution to the same geodesic equation but for ${q \in [-\varepsilon, -2/3\varepsilon]\cup [2/3\varepsilon,\varepsilon]}$ is given by the same function $\gamma_{\nu}(r)=\left(\log(r^2),z_0\right)$.  The magnitude of its speed is easily seen to be equal to $2$.\\ Let $q\in [-\varepsilon/2,\varepsilon/2]$,  notice that for $\nu\to \infty$  \[ \| \dot{\gamma_{\nu}}\|_{\nu}\to +\infty.\] Moreover it is immediate to see that $\gamma_{\nu}$ maps the segment $\left[e^{-\varepsilon/4},e^{\varepsilon/4}\right]$ to $[-\varepsilon/2,\varepsilon/2]$.  Since the domain is independent from $\nu$,  we conclude that for $\nu\to \infty$, the geodesic length of the neck is growing to infinity. 
\end{ex}
\begin{rem}\label{rem arclength}
We will need the computation about geodesic later in the paper so let us assume that our curves are parametrised with their arc-length to ensure they have constant unit speed.
\end{rem}

\subsection{Perturbing the Cauchy-Riemann-Floer equations using Morse functions}
For each $l \in \{1, \dots , m\}$,  by the Darboux - Weinstein theorem there exists $\delta>0$ such that we have a symplectic embedding \[ T^*_{\delta}(S^n)_l \hookrightarrow M\] that identifies the zero section $S^n$ with the Lagrangian sphere $V_l$.  The precise quantity $\delta$ might depend on the index $l$, but this is of little importance here so for simplicity we suppress the "$l$" from the notation. Since the Lagrangian spheres are disjoint from each other, by taking smaller $\delta$ if necessary, we can always assume that the images of these symplectic embedding are disjoint from each other.\\ We remind the reader that $T^*_{\delta}(S^n)_l$ is the disk bundle of the cotangent bundle of $S^n$ whose vectors have norm less or equal to $\delta$ with respect to the metric induced by the canonical symplectic form and almost complex structure.  The index $l$ keeps track of which Lagrangian sphere $V_l$ we are working on.\\ 
Let us fix such an embedding and equip $M$ with an $\omega$-compatible almost complex structure $J$ that extends the canonical ones induced on $\bigcup_l T^*_{\delta}(S^n)_l$.  This implies we are choosing a framing for the Lagrangian sphere $V_l$,  following the convention in \cite{Sei03} we suppress the specific choice of the framing in the notation,  understood that it has been fixed once and for all.\\ Such an identification gives us preferred choices for Liouville vector fields defined around each Lagrangian spheres,  which we will denote with $\eta_l$: specifically the push-forward along the chosen embedding of the radial vector field defined on $T^*_{\delta}(S^n)_l$.  In this way,  any (rescaled) sphere bundle $S(S^n)_l$ lying in $T^*_{\delta}(S^n)_l$ inherits the structure of a codimension $1$ submanifold of $T^*_{\delta}(S^n)_l$, hence of $M$.  We pick one and denote it with $S(V_l)$,  we choose $\varepsilon >0$ such that, for each Lagrangian sphere $V_l$,  we have an isometric embedding 
\[ (-\varepsilon,\varepsilon)\times S(V_l) \subset T^*_{\delta}(S^n)_l \subset M. \]
Notice that we are using the flow of the Liouville vector field $\eta_l$ to define the coordinate $q\in (-\varepsilon,\varepsilon)$.  Without loss of generality,  the complement of $(-\varepsilon,\varepsilon)\times S(V_l)$ inside $T^*_{\delta}(S^n)_l$,  denoted with $D(V_l)$,  is isomorphic to the disk bundle $T^*_{\delta'}(S^n)_l$,  for some $\delta'>0$.   The exact value of $\delta'$ does not matter. \\
To ease the notation, let us set $M':=M\setminus \bigcup_l^m T^*_{<\delta}(S^n)_l$.  $M'$ is a compact symplectic manifold with $m$ boundary components $\{\partial M'_l\}_{l=1}^m$.  For each of this boundary components we keep track of the sign $\sigma_{l}$ of the Dehn twist about $V_l$.  
\begin{figure}[H]
\begin{center}
\includegraphics[scale=.5]{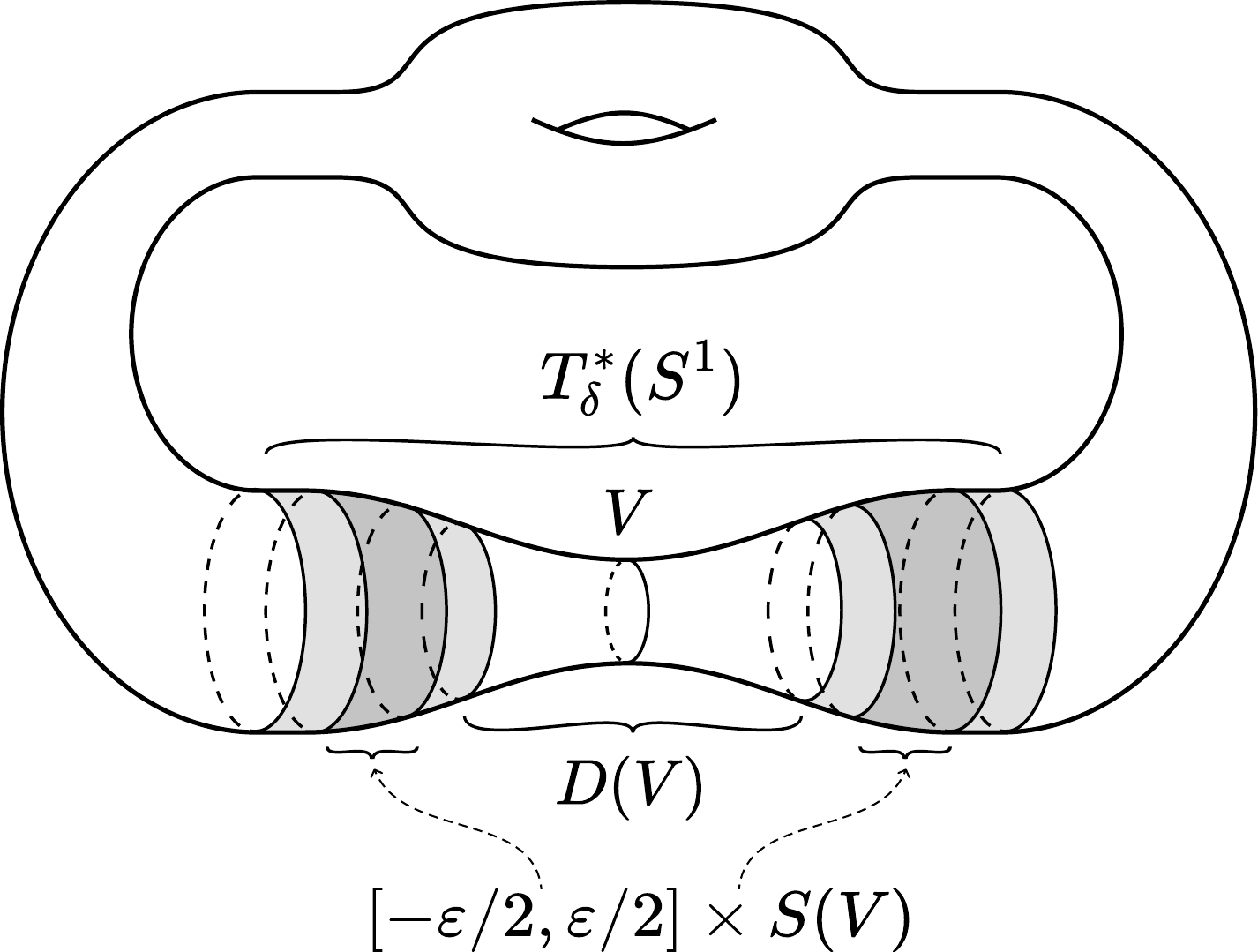}
\captionof{figure}{The setup for the neck-stretch in dimension $2$ for a single Lagrangian $V$} \label{fig:setup}
\end{center}
\end{figure}
Notice that, for each index $l$, we can consider the (radial) geodesic vector field $\dot{\gamma}_{\nu,l}(r)$ on a sufficiently small neighbourhood of $\partial M'_l$. This is due to the fact that, for each $l$, 
\[T^*_{\delta}(S^n)_l\cap M'\neq \emptyset.\]
The specific choice of parameter $\nu$, which a priori affects the metric according to which the geodesics are taken, is irrelevant: thanks to the properties of the function $\beta_{\nu}$, for different choice of neck-stretch parameter the metric is constant in a neighbourhood of $\partial M'_l$.

\begin{prop} \label{prop existence of Morse functions} Let $\rho>0$ be an arbitrary small real number.  Let $h': M' \to \R$ be an admissable Morse function (in the sense of \cite[Def. 7.1]{FHS95})  whose $C^2$-norm is bounded above by $\rho$ and whose gradient flow on $M'\cap T^*_{<\delta}(S^n)_l$ coincides with $\sigma_l\rho\dot{\gamma}_{\nu,l}(r)$ for some parameter $\nu$. We can find a family of admissable Hamiltonians $h^{\nu} : M \to \R$ such that,  for each $\nu > 0$ the following conditions are satisfied:
\begin{enumerate}
\item $\left. h^{\nu} \right|_{M'}\equiv h'$
\item Let $\nabla_{\nu}h^{\nu}$ be the gradient of $h^{\nu}$ with respect to the metric $g^{\nu}$. If $\sigma_l=+1$ (resp. $\sigma_l=-1$), on $T^*_{\delta}(S^n)_l\setminus V_l$ the gradient $\nabla_{\nu}h^{\nu}$ coincides with a positive (resp. negative) multiple of the Liouville vector field $\eta_l$. Moreover $\| h^{\nu}\|_{C^2}\leq \rho$.
\end{enumerate}
\end{prop}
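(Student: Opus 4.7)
The strategy is to extend $h'$ from $M'$ to all of $M$ by defining $h^{\nu}$ on each cotangent-neighborhood $T^*_{\delta}(S^n)_l$ as a smooth function that depends \emph{only on the radial Liouville coordinate} $\|p\|$. Such a function automatically has gradient proportional to the Liouville vector field $\eta_l$ in \emph{any} $g^{\nu}$, because $g^{\nu}$ is radially symmetric on $T^*_{\delta}(S^n)_l$: the stretching identifies the neck $(-\varepsilon,\varepsilon)\times S(V_l)$ with a rescaled $I_{\nu}\times S(V_l)$ via a diffeomorphism that fixes $\eta_l$ as the distinguished $\partial_q$-direction, so $\nabla_{\nu}$ of a radial function is always a scalar multiple of $\eta_l$.

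The first key observation is that the hypothesis $\nabla h' = \sigma_l \rho \dot{\gamma}_{\nu,l}$ near $\partial M'_l$ forces $h'$ to depend only on the arc-length coordinate along the radial geodesic in a collar of $\partial M'_l$; in particular $h'|_{\partial M'_l}\equiv c_l$ for some constant $c_l$ and $h'$ is linear of slope $\sigma_l \rho$ transversely to $\partial M'_l$. This makes matching across $\partial M'_l$ a one-dimensional problem: I only need to prescribe a smooth function $F_l:[0,\delta^2]\to\R$ whose value, first derivative, and second derivative at $t=\delta^2$ match those of $h'$ along the radial geodesic, and then set
\[
h^{\nu}(x):=\begin{cases} h'(x) & x\in M',\\ F_l(\|p\|^2) & x\in T^*_{\delta}(S^n)_l.\end{cases}
\]
I will choose $F_l$ so that $\sigma_l F_l'(t)>0$ on $(0,\delta^2]$ (forcing the gradient on $T^*_{\delta}(S^n)_l\setminus V_l$ to be a $\sigma_l$-signed multiple of $\eta_l$) and so that $F_l$ is smooth at $t=0$, which makes $h^{\nu}$ smooth across $V_l$ because $\|p\|^2$ is a smooth function on the total space.

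It remains to bound $\|h^{\nu}\|_{C^2}$ uniformly in $\nu$. On $M'$ the metric is $\nu$-independent and the bound is inherited from $\|h'\|_{C^2}\le \rho$. Inside the neck, the computations summarised in Example~\ref{ex stretched metric surface} give $\|\eta_l\|_{\nu}^2=e^q/\beta_{\nu}'$ and $\|\mathbf{R}\|_{\nu}^2=e^q\beta_{\nu}'$, so for a function $F_l(\|p\|^2)$ depending only on the Liouville coordinate the associated covector has squared norm
\[
|d h^{\nu}|_{g^{\nu}}^2 \;=\; \beta_{\nu}'(\beta_{\nu}^{-1}(q))\,e^{-q}\,(F_l'(\|p\|^2))^2\cdot(\text{bounded factor}),
\]
which is monotone non-increasing as $\nu$ grows because $\beta_{\nu}'\le 1$ in the stretched region; a similar computation handles the Hessian term. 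Consequently the worst case is $\nu=0$, where $g^0$ is the unmodified metric and the bound $\|h^{\nu}\|_{C^2(g^0)}\le\rho$ can be secured by choosing $F_l$ with $\|F_l\|_{C^2}$ sufficiently small (the admissible slope $\sigma_l\rho$ is of order $\rho$, and the interpolation can be performed on the fixed interval $[0,\delta^2]$ without enlarging the $C^2$-norm beyond a universal multiple of $\rho$, which we absorb by a harmless rescaling of $\rho$).

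The main obstacle is precisely this last step: one has to check that the metric-theoretic $C^2$-norm does not degenerate as $\nu\to\infty$, and to verify the sign and non-vanishing of the proportionality function between $\nabla_{\nu}h^{\nu}$ and $\eta_l$ throughout the entire neck. Both points follow from the fact that the dependence of $h^{\nu}$ on the angular $S(V_l)$-variables is trivial, so only the monotonicity properties of $\beta_{\nu}$ and of $F_l$ enter the estimates; the stretching only helps, never hurts.
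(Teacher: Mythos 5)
Your construction does satisfy the literal conditions 1.\ and 2.\ (your observation that a function of the radial coordinate alone has $g^{\nu}$-gradient proportional to $\eta_l$, because the adjusted metric is block-diagonal with respect to $\eta_l$, $\mathbf{R}$ and $\xi$, is correct and is also implicit in the paper). But it takes a genuinely different route from the paper, and the difference is fatal for what the proposition is actually used for. The paper's proof \emph{prescribes the gradient} on the neck to be $\sigma_l\rho\dot{\gamma}_{\nu,l}(r)$, i.e.\ a vector field of constant $g^{\nu}$-norm $\rho$ (the geodesics are arc-length parametrised, Remark \ref{rem arclength}), extends it over $T^*_{\delta}(S^n)_l$ by rescaling $\eta_l$, and then \emph{integrates} it to obtain $h^{\nu}$; the resulting family is genuinely $\nu$-dependent, being linear of slope $\rho$ in $g^{\nu}$-arclength across a neck whose $g^{\nu}$-length tends to infinity. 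Your $h^{\nu}=F_l(\|p\|^2)$ is a single function independent of $\nu$, and, as your own estimate shows, the $g^{\nu}$-norm of its gradient in the stretched region decays like $\beta_{\nu}'\sim \varepsilon/(2\nu+\varepsilon)$. This is precisely the opposite of the intended design: Lemma \ref{lem Hamiltonian flow of morse function} and Lemma \ref{claim min distance is lambda} quote this proposition for the stronger property $\nabla_{\nu}h^{\nu}=\sigma_l\rho\dot{\gamma}_{\nu,l}(r)$ on the neck, which gives the $\nu$-uniform displacement $d^{\nu}(x,\psi_1^{\nu}(x))=\rho$, and that uniformity is what makes the lower bound $E(u^i)\geq R_i\rho^2/C$ in Theorem \ref{thm lower bound on energy for strips} diverge as $i\to\infty$. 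With your $\nu$-independent Hamiltonian the displacement in the stretched metric tends to $0$, Lemma \ref{claim min distance is lambda} fails, and the neck-stretching energy estimate collapses. So the slogan ``the stretching only helps, never hurts'' is exactly the wrong principle here: the Hamiltonian must be re-adapted to each stretched metric, not held fixed while the metric degenerates its gradient.

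Two smaller points. First, your ``harmless rescaling of $\rho$'' is not available: $\rho$ is fixed by the hypothesis on $h'$, whose gradient already has norm exactly $\rho$ along the collar of $\partial M'_l$, so the slope of $F_l$ at the matching locus is forced and you may only shrink the second-derivative contribution by choosing the interpolation carefully, not by changing $\rho$. Second, once you adopt the paper's viewpoint the $C^2$-discussion simplifies: since the gradient is prescribed to have $g^{\nu}$-norm $\rho$ everywhere on the neck and the metric near $\{\pm\varepsilon\}\times S(V_l)$ is independent of $\nu$, the matching with $h'$ and the derivative bounds come for free from the construction, with no need for the case analysis in $\nu$ that your argument requires.
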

Since $S(V_l)$ is a contact-type submanifold of $M$, the Liouville vector field $\eta_l$ is transverse to it, which implies that the same is true for $\sigma_l\rho\dot{\gamma}_{\nu,l}(r)$. We are following the \textit{cohomology} convention of \cite[Chapter 4.1]{AuDa14} for outward/inward vector field in order to set up relative Morse cohomology.
\begin{proof}
We will prove the proposition for a single (positive) Dehn twist $\tau_V$. The general result will be obtained by iterating the proof for each Lagrangian sphere starting with a Morse function on $M'$ that satisfies the required conditions on each boundary component.\\ We begin by defining what the gradient should look like on the neck $(-\varepsilon,\varepsilon)\times S(V)$: following the computations done in Example \ref{ex stretched metric surface} and their obvious generalization in higher dimension, we impose the gradient of our candidate function to be equal to $\rho\dot{\gamma_{\nu}}(r)$ (we remind the reader of Remark \ref{rem arclength}).  Now since the Liouville vector field $\eta$ is defined on $ T^*_{\delta}(S^n)$,  hence in particular on $D(V)$, we can assume that after an appropriate smooth rescaling,  our candidate gradient is defined on the entire $T^*_{\delta}(S^n)$.  Without loss of generality,  the gradient of $h'$ on the boundary of $M'$ coincides with $\rho\dot{\gamma_{\nu}}(r)$.  Note that since the metric nearby $\{\pm\varepsilon\}\times S(V)$ is independent from the parameter $\nu$, we can now integrate the candidate gradient we constructed before and extend $h'$ to an Hamiltonian function $h^{\nu}$ on the entire $M$. It is immediate to see that properties 1. and 2. are then satisfied.
\end{proof}

\begin{lemma}\label{lem Hamiltonian flow of morse function} Let us denote by $\psi^{\nu}_t : \ M \times \R \to M$ the Hamiltonian flow of $h^{\nu}$ with respect to the symplectic form $\omega$. Then, depending on $\sigma_l$,  $\psi^{\nu}_t$ acts as a constant translation in the positive or negative Reeb direction in a neighbourhood of each Lagrangian sphere $V_l$
\end{lemma}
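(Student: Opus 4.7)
The plan is to reduce the statement to the pointwise identity $X_{h^\nu} = -J^\nu \nabla_\nu h^\nu$ and then apply Proposition~\ref{prop existence of Morse functions} together with the adjustedness of $J^\nu$ (Definition~\ref{def adjusted acs}).

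First I would establish the general identity between the Hamiltonian and Riemannian gradient. Recall that the metric $g^\nu$ on $M$ is defined by $g^\nu(u,v) = \omega(u, J^\nu v)$ from the $\omega$-compatibility of $J^\nu$, so that for any smooth $f : M \to \R$ and tangent vector $v$ one has
\[
\omega(X_f, v) = df(v) = g^\nu(\nabla_\nu f, v) = \omega(\nabla_\nu f, J^\nu v) = -\omega(J^\nu \nabla_\nu f, v),
\]
where the last step uses $\omega(a, J^\nu b) = -\omega(J^\nu a, b)$, itself a consequence of $\omega(J^\nu a, J^\nu b) = \omega(a, b)$. Nondegeneracy of $\omega$ then yields $X_{h^\nu} = -J^\nu \nabla_\nu h^\nu$ everywhere on $M$.

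Next I would restrict to a fixed Lagrangian sphere $V_l$ and work in the punctured tubular neighborhood $T^*_\delta(S^n)_l \setminus V_l$, where both the Liouville vector field $\eta_l$ and the Reeb vector field $\mathbf{R}_l$ are defined. By item~(2) of Proposition~\ref{prop existence of Morse functions}, we can write $\nabla_\nu h^\nu = c_l \, \eta_l$ on this region, with $c_l$ a positive (resp. negative) function when $\sigma_l = +1$ (resp. $\sigma_l = -1$). Since $J^\nu$ is $\eta_l$-adjusted on a neighborhood of $S(V_l)$ in the sense of Definition~\ref{def adjusted acs}, we have $J^\nu(\eta_l) = \mathbf{R}_l$. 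Substituting into the general identity gives
\[
X_{h^\nu} = -J^\nu(c_l \, \eta_l) = -c_l \, \mathbf{R}_l,
\]
so that $X_{h^\nu}$ is pointwise a nonzero scalar multiple of the Reeb vector field, with sign determined by $-\sigma_l$. Because $h^\nu$ is autonomous, $\psi^\nu_t$ is a one-parameter group whose orbit through any point travels at time-independent speed along the Reeb direction; this is the sense in which the flow is a "constant translation" in the $\pm\mathbf{R}_l$ direction.

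The content of this lemma is essentially bookkeeping once the previous proposition and the adjustedness convention are set up, and I do not anticipate any serious obstacle. The only point that requires a little care is making sure that the identification $\nabla_\nu h^\nu \propto \eta_l$ supplied by Proposition~\ref{prop existence of Morse functions} is a clean scalar (rather than scalar plus a Reeb component), which follows directly from the construction in the proof of that proposition, where the gradient is built from the radial geodesic field $\dot{\gamma}_{\nu,l}(r)$; and tracking the sign coming from the convention $X_h = -J \nabla h$, which is what produces the $\sigma_l$-dependence in the final statement.
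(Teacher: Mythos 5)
Your overall strategy coincides with the paper's: Proposition~\ref{prop existence of Morse functions} makes $\nabla_{\nu}h^{\nu}$ radial near each $V_l$, adjustedness gives $J^{\nu}\eta_l=\mathbf{R}$, and compatibility converts the gradient into a Hamiltonian vector field parallel to $\mathbf{R}$; the paper simply carries out this computation after pulling everything back through the isometric embedding $\Phi_{\nu}$ to the cylinder $I_{\nu}\times S(V_l)$, while you do it intrinsically on $M$. Two points in your write-up need tightening, though. First, the word \emph{constant} does not follow from what your main argument establishes. You only use that $\nabla_{\nu}h^{\nu}=c_l\,\eta_l$ with $c_l$ a function of fixed sign, which yields $X_{h^{\nu}}=\mp c_l\,\mathbf{R}$: the flow of such a field does move points along Reeb orbits, but at a speed $|c_l|$ that may vary from point to point, and autonomy of $h^{\nu}$ alone does not force this speed to be constant. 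The paper's proof uses the precise normalization built into Proposition~\ref{prop existence of Morse functions}, namely $\nabla_{\nu}h^{\nu}=\sigma_l\rho\,\dot{\gamma}_{\nu,l}$ with $\dot{\gamma}_{\nu,l}$ of unit speed, so that $X_{h^{\nu}}$ has constant norm $\rho$ along the neck; this quantitative constancy is exactly what Lemma~\ref{claim min distance is lambda} later needs to conclude $d^{\nu_i}\bigl(x,\psi_{1}^{\nu_i}(x)\bigr)=\rho$. Your closing caveat about the gradient being a ``clean scalar'' multiple of $\eta_l$ addresses the direction (no Reeb component) but not the constancy of the magnitude, so that is the piece to make explicit.

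Second, your sign is opposite to the paper's. With your convention $X_h=-J\nabla h$ you obtain the \emph{negative} Reeb direction when $\sigma_l=+1$, whereas the paper's computation gives $X_{\nu}=\sigma_l\rho\,\mathbf{R}$, and Remark~\ref{rem sign morse function} relies on $\sigma_l=+1$ pushing points in the same (positive) Reeb direction as the Dehn twist in order to rule out fixed points near $V_l$. Since Lemma~\ref{lem Hamiltonian flow of morse function} as stated only claims a translation in the positive or negative Reeb direction ``depending on $\sigma_l$'', this is a convention issue rather than an error, but you should fix the sign convention for the Hamiltonian vector field so that the correspondence between $\sigma_l$ and the Reeb direction matches its later use.
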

\begin{proof}
Remember that 
\[\Phi_{\nu} : I_{\nu} \times S(V_l) \to M\] 
is an isometric embedding if we equip $M$ with the metric induced by $\omega$ and $J^{\nu}$ since this identification sends the canonical $\eta$-adjusted almost complex structure on $I_{\nu} \times S(V_l)$ to $J^{\nu}$.  Thanks to Prop. \ref{prop existence of Morse functions} 
\[ \nabla_{\nu}h^{\nu}=  \sigma_l \rho\dot{\gamma_{\nu}}(r),\] 
hence if we pull it back via $\Phi_{\nu}$, we get \[ \nabla_{\nu}h^{\nu}= \sigma_l \rho\partial_s\]
The Hamiltonian vector field is easily then computed  as \[\left.X_{\nu}\right|_{I_{\nu} \times S(V_l)}= \sigma_l \rho\mathbf{R} \ \text{ on } I_{\nu} \times S(V_l)\] and its flow is clearly a constant translation in the Reeb direction.  Full details on the identifications can be found in  \cite[Proposition 1.21 and Lemma 1.30]{Pat99}.
\end{proof}

\begin{defi} Consider the composition of Dehn twists $\tau : M \to M$ around the disjoint Lagrangian spheres.  Following the convention of Eq.  \ref{eq floer perturbed symplectomorphism}
we define the \textit{perturbed} composition of Dehn twists as
\begin{equation*}
    \tau_p^{\nu}:=\tau \circ \psi_{1}^{\nu}.
\end{equation*}
\end{defi}

\begin{rem}\label{rem sign morse function}
For $\sigma_l=\pm 1$,  the perturbed Dehn twists $\tau_p^{\nu}$ does not have fixed points along the stretched necks and in general nearby each of the $V_l$.  In the case of a single Lagrangian sphere and  $\sigma_l=1$,  Lemma \ref{lem Hamiltonian flow of morse function} shows that the perturbation moves points in the positive Reeb direction and the Dehn twist moves points along the (positive) Reeb direction since it coincides with the rescaled geodesic flow on the cotangent bundle $T^*_{\delta}(S^n)_l$ (see \cite[Theorem 6.1]{Etn06}).  Following the same reasoning as in \cite[Lemma 20.1]{Sei01},  we observe that the perturbed Dehn twist does not have any fixed point on $T^*_{\delta}(S^n)_l$ and any other fixed point arises as a fixed point of $\psi_{1}^{\nu}$. The same reasoning works in the case of $\sigma_l=-1$,  since we are moving everything in the opposite Reeb direction. 
\end{rem}
 In the case $\nu=0$, i.e.  $M$ equipped with the original almost complex structure $J$,  we will denote with $\tau_p$ the perturbed composition of Dehn twists. 
 \subsection{Existence of suitable regular adjusted almost complex structures}
Let us work in the case of a single Dehn twist $\tau=\tau_V$, the general case is done in the exactly same way since the supports of the Dehn twists are disjoint.\\
Assume that the support of the Dehn twist $\tau_{V}$ is contained in the smaller neighborhood $D(V)$ defined previously.  By working in the vertical tangent bundle of $M_{\tau_V}$,  we can find a family of $\omega$-compatible almost complex structures $\{\widetilde{J}_t\}$ such that:
\begin{enumerate}
\item $\tau_V^*\widetilde{J}_{t+1}=\widetilde{J}_t$
\item On $(-\varepsilon,\varepsilon)\times S(V)$,  for each pair of indexes $t,t'$, we have $\widetilde{J}_t=\widetilde{J}_{t'}=:\widetilde{J}$ 
\item On $(-\varepsilon,\varepsilon)\times S(V)$, $\widetilde{J}$ is $\eta$-adjusted,  where $\eta$ is the (radial) Liouville vector field defined in the neighborhood of the Lagrangian sphere $V$.  
\end{enumerate}
Since the support of the Dehn twist is strictly contained in $D(V)$,  requests 1. and 2. are not contradicting each other. By perturbing the family $\{\widetilde{J}_t\}$ outside the subspace swept by $T_{\delta}^*(S^n)$,  there exists a family of regular almost complex structures $\{J'_t\}$ satisfying \cite[Theorem 9.1]{Sei97} and both condition 1. and 2.: the absence of fixed points for our perturbed Dehn twist in the neighborhood of $V$ implies that every (finite energy) solution of the above equation will eventually exit such neighborhood. The same is true for any (non-constant) pseudo-holomorphic sphere, due to the exactness of the symplectic form on the Weinstein neighbourhood $D(V)$. Therefore thanks to \cite[Remark 3.2.3]{McDS12} we can achieve regularity by perturbing our compatible a.c.s. on the complement of $T_{\delta}^*(S^n)$. We conclude this discussion by reminding the reader that a similar perturbation was used in \cite[Theorem 3.4]{OS04}.\\ Let 
\[h^0 : M \to \R\] 
be the Hamiltonian function constructed in Prop. \ref{prop existence of Morse functions} for $\nu=0$, we can now define 
\[J_t:=\left(\psi_t^{0}\right)^*J'_t. \] 
Notice that the family $\{J_t\}$ satisfies the relation 
\begin{equation}\label{eq acs is periodic}
    {\tau_p}^*J_{t+1}=J_t
\end{equation} 
and it is still $\eta$-adjusted on $(-\varepsilon,\varepsilon)\times S(V)$: by Lemma \ref{lem Hamiltonian flow of morse function} ${\psi_t^{0}}$ represents a constant translation along the Reeb orbits and hence preserves $J'_t$.\\

For the stretched almost complex structures, let $\{\nu_i\}_{i\in \N}$ be an increasing sequence of positive numbers such that:
\begin{itemize}
    \item $\nu_0 = 0$ \item $\lim\limits_{i\to +\infty}\nu_i =+\infty$
\end{itemize}
and let $\{\widetilde{J}^{\nu_i}_t\}$ be the associated family of a.c.s. realizing the neck-stretching. Notice that properties 1. and 2. are still satisfied by it. For each parameter $\nu_i$, we then repeat the procedure above starting from $\{\widetilde{J}^{\nu_i}_t\}$, obtaining a \textit{regular} almost complex structure $\{J_t^{\nu_i}\}$ such that 2. and Eq. \ref{eq acs is periodic} are still valid.

\begin{rem}\label{rem induced metric on neck}
    For any neck-stretch parameter $\nu_i$, each $\{J_t^{\nu_i}\}$ induces a compatible metric 
    \[g^{\nu_i}_t(-,-):=\omega(J^{\nu_i}_t - , - )\]
    on the mapping torus $M_{\tau^{\nu_i}_p}$, or - \textit{equivalently} - a family of metrics on $M$ satisfying an obvious periodicity condition. Thanks to property 2., which holds for any $\nu_i$, on $(-\varepsilon,\varepsilon)\times S(V)$ the metric is independent on $t$:
    \[g^{\nu_i}_t(-,-)=\omega(J^{\nu_i}_t - , - )= \omega(J^{\nu_i}_{t'} - , - ) =: g^{\nu_i}(-,-) \]
    so we can safely drop the dependency on $t$ as long as we are working on the neck of $V$. This will be important later for establishing certain estimates on the area of the curves.
\end{rem}


\section{Lower bounds on the energy of certain pseudoholomorphic curves}
Let us begin by setting up some notation: let $\mathcal{M}^{\nu_i}_k(x_-,x_+)$ be the moduli space of $J^{\nu_i}$-holomorphic maps $u^i : \R \times \R \to M$ satisfying the Floer equations with Maslov index $k$.
\begin{equation}\label{eq floer symplectomorphism stretched}
 \begin{cases} &\partial_r u^i + J_t^{\nu_i}\partial_t u^i =0 \\ & \tau_p^{\nu_i}u^i(r,t+1)= u^i(r,t)\\ &\lim\limits_{r\to \pm \infty} u^i(r,t)=x_{\pm} \in \text{Fix}(\tau_p^{\nu_i}).\end{cases} 
 \end{equation}
\begin{defi} Let $u^i \in \mathcal{M}^{\nu_i}_k(x_-,x_+)$, using the Cauchy-Riemann equations, we can define the \textit{energy} of $u^i$ as 
\[ E(u^i) :=\int_{\R \times [0,1]} {u^i}^*\omega.\]
\end{defi}
\begin{rem} The $t$-dependent almost complex structure $\{J^{\nu_i}_t\}$ induces a family of compatible metrics $\{g^{\nu_i}_t\}$ on $M$.  With that,  the energy of $u^i$ satisfies
\[E(u^i)= \int_{\R \times [0,1]}\left(\lvert du^i(r,s)\rvert_{g^{\nu_i}_s}^{op}\right)^2dsdr, \]
where $\lvert \hspace*{1mm}\cdot \hspace*{1mm} \rvert_{g^{\nu_i}_t}^{op}$ is the operator norm for linear maps induced by the family of metrics $\{g^{\nu_i}_t\}$ on $M$.\\
\noindent Thanks to Rem.  \ref{rem induced metric on neck},  on $(-\varepsilon,\varepsilon)\times S(V)$ the family of metrics is constant and coincides with $g^{\nu_i}$,  which implies that on the neck,  the energy of a $J^{\nu_i}$-holomorphic map coincides with its area.
\end{rem}
This entire section is devoted to proving the following:
\begin{theorem}\label{thm lower bound on energy for strips} Let $\{\nu^i\}_i$ be a sequence of neck-stretching parameters. Up to passing to a subsequence of them, which we will still denote with $\{\nu^i\}_i$, we can find a sequence of positive real numbers $\{\lambda_i \}_i$ satisfying \[ \lim_{i\to+\infty} \lambda_i= +\infty\] with the following property: for each index $i$, for any pair of fixed points $(x_-,x_+)\in \text{Fix}(\tau_p^{\nu_i})$, every solution $u^i$ of Eq. \ref{eq floer symplectomorphism stretched} intersecting $\{0\}\times S(V_l)$, has its energy bounded below by $\lambda_i$.
\end{theorem}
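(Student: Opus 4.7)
The central idea is to convert the divergent $g^{\nu_i}$-geometric size of the stretched neck into a divergent lower bound on the Floer energy. By Remark \ref{rem induced metric on neck}, on $(-\varepsilon,\varepsilon)\times S(V_l)$ the energy of a $J^{\nu_i}$-holomorphic map equals its $g^{\nu_i}$-area; and by Example \ref{ex stretched metric surface} (together with its straightforward higher-dimensional analogue) the $g^{\nu_i}$-distance from $\{0\}\times S(V_l)$ to the boundary of the stretched region grows of order $\sqrt{\nu_i}$. I would argue by contradiction using the SFT compactness theorem of \cite{BEHWZ03}.

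\textbf{Reduction.} Suppose the conclusion fails. After extracting a subsequence there exist $\Lambda>0$ and solutions $u^i\in \mathcal{M}^{\nu_i}_k(x^i_-,x^i_+)$ intersecting $\{0\}\times S(V_l)$ with $E(u^i)\le\Lambda$. By Remark \ref{rem sign morse function}, the fixed points $x_\pm^i$ all lie in the interior of $M'$; on that region $h^{\nu_i}\equiv h'$ by Prop. \ref{prop existence of Morse functions}, so $\tau_p^{\nu_i}$ coincides with $\tau_p$ there for every $i$. Hence there are only finitely many possibilities for the $x_\pm^i$ and, passing to a further subsequence, I may assume $x_\pm^i\equiv x_\pm$ are fixed.

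\textbf{SFT limit and contradiction.} Applying SFT compactness to $\{u^i\}$ under the neck-stretching along the contact-type hypersurface $S(V_l)$, one extracts a limit pseudo-holomorphic building consisting of a principal level in $(M',J)$, finitely many intermediate symplectization levels in $(\mathbb{R}\times S(V_l),d(e^q\alpha),\widetilde J)$ whose components have punctures asymptotic to closed Reeb orbits on $S(V_l)\cong ST^*S^n$ (i.e. closed geodesics on $S^n$), and a principal level in $(D(V_l),J)$. Because each $u^i$ meets $\{0\}\times S(V_l)$, at least one non-constant symplectization component persists in the limit. The exactness of $\omega$ on $T^*_\delta(S^n)_l$ together with the absence of fixed points of $\tau_p^{\nu_i}$ in a neighbourhood of $V_l$ (Remark \ref{rem sign morse function}) yields, via Stokes's theorem and the positivity of Reeb orbit actions, an action identity that obstructs matching the Reeb asymptotics of this symplectization component with those of the principal levels. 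This contradicts $E(u^i)\le\Lambda$, so $E(u^i)\to+\infty$. Setting $\lambda_i:=\tfrac12\inf\{E(u^i):u^i\text{ as in the statement}\}$, and passing to a subsequence of $\{\nu_i\}$ to guarantee that $\lambda_i$ is strictly increasing and divergent, gives the claim.

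\textbf{Main obstacle.} The delicate step is the classification of the admissible limit buildings and the verification that no such configuration actually exists in the presence of the perturbed Dehn twist. This requires combining the explicit structure of $\tau_p^{\nu_i}$ near $V_l$, the exactness of $\omega$ on the Weinstein disk bundle $T^*_\delta(S^n)_l$, and the Reeb geometry of $ST^*S^n$ (closed geodesics on the round sphere of uniform period $2\pi$). An essentially equivalent but more hands-on alternative is to replace the SFT argument by an iterated monotonicity estimate along a chain of $g^{\nu_i}$-balls of fixed small radius placed along the image of $u^i$ inside the middle portion of the neck, provided one verifies a uniform monotonicity constant using the specific linear profile of $\beta_{\nu_i}$ on the middle portion.
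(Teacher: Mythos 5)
Your reduction (argue by contradiction with a uniform bound $E(u^i)\le\Lambda$, fix the pair of limit points) matches the paper's Case 2, but the step where you actually derive the contradiction is not carried out, and neither of the two mechanisms you sketch can work as stated. The appeal to SFT compactness ends with the assertion that exactness of $\omega$ on the Weinstein neighbourhood plus ``positivity of Reeb orbit actions'' yields an ``action identity'' obstructing the limit building; no such identity is exhibited, and an obstruction of this kind, if it held independently of the Hamiltonian perturbation, would prove far too much: bounded-energy $J^{\nu_i}$-holomorphic curves traversing an arbitrarily stretched neck do exist (e.g.\ trivial cylinders over closed Reeb orbits of $S(V_l)$ have $\omega$-area bounded independently of $\nu_i$, since $\omega$ is unchanged by the stretching), and the formation of symplectization levels is exactly the phenomenon SFT compactness accommodates rather than forbids. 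The same example defeats your proposed alternative: the ``uniform monotonicity constant'' for $g^{\nu_i}$-balls of fixed radius cannot hold, because the metric is stretched while $\omega$ is fixed, so a holomorphic curve may pass through the centers of arbitrarily many disjoint unit $g^{\nu_i}$-balls while carrying bounded area. In addition, applying \cite{BEHWZ03} directly is delicate here since the solutions of Eq.~\ref{eq floer symplectomorphism stretched} carry a Hamiltonian perturbation and a twisted periodicity condition; the paper deliberately uses only the bubbling lemma \cite[Lemma 5.11]{BEHWZ03}, not the full compactness theorem.

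What is missing is the quantitative ingredient that the paper's proof is built on: the perturbation $\psi_1^{\nu_i}$ acts on the neck as the time-$\rho$ Reeb translation, so $d^{\nu_i}(x,\psi_1^{\nu_i}(x))=\rho$ for every $x$ in the neck (Lemma~\ref{claim min distance is lambda}); combined with the twisted periodicity this forces every slice $t\mapsto u^i(r,t)$ with $r\in L_i(t)$ to have $g^{\nu_i}$-length at least $\rho$ (Lemma~\ref{cor Ji subset Ii}). A uniform bound $C$ on $\lvert du^i\rvert$ over the middle neck, obtained from the bubbling lemma together with exactness of $\omega$ on the neck (this is where exactness actually enters), then gives $\mu(L_i(t))\ge R_i/C$ for the radial length $R_i\to\infty$ (Lemma~\ref{claim Ji non empty}), and Cauchy--Schwarz with Fubini yields
\begin{equation*}
E(u^i)\;\ge\;\int_{L_i(t_i)}\Bigl(\int_0^1\bigl(\lvert du^i(r,s)\rvert^{op}_{g^{\nu_i}_s}\bigr)^2\,ds\Bigr)dr\;\ge\;\frac{R_i}{C}\,\rho^2\;\longrightarrow\;+\infty,
\end{equation*}
contradicting $E(u^i)\le\Lambda$. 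Your sketch never uses $\rho$ or the absence of fixed points of the twisted periodicity on the neck in any quantitative way, so as written it does not rule out bounded-energy solutions through $\{0\}\times S(V_l)$; to repair it you would either have to prove a Floer-type SFT compactness statement adapted to the twisted, perturbed equation and then analyze the asymptotics of the middle levels (essentially re-deriving the displacement mechanism), or simply run the length--area estimate above.
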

\begin{rem} Our perturbed Dehn twists do not have fixed points in the neighbourhood of $V_l$ bounded by $\{0\}\times S(V_l)$, hence if a solution of Eq.  \ref{eq floer symplectomorphism stretched} goes through any of the $V_l's$ it must intersect $\{0\}\times S(V_l)$ as well.
\end{rem}
For simplicity,  we assume now that there is only one Lagrangian sphere $V$ together with the associated Dehn twist $\tau_V^{\sigma}$ and that there is only one pair of fixed points $x_-,x_+\in M$. By repeating the proof for each $V_l$ and for each pair of fixed points and taking the minimum among the lower-bounds found,  we obtain a global lower estimate on the energy and the theorem will be proved. \\ 
\begin{proof}
Let $(x_-,x_+)\in \text{Fix}(\tau_p)$ be our chosen pair of non-degenerate fixed points. For each neck-stretch parameter $\nu_i$, let $U_i:=\{u^i_q\}_{q}$ be a family of $J^{\nu_i}$-holomorphic curves, solution to Eq. \ref{eq floer symplectomorphism stretched}, connecting $x_-$ to $x_+$ and passing through $\{0\}\times S(V)$.\\ 
Let 
\begin{equation*}\label{eq definition set energy differences}
    ED_i:= \{ E \in \R \mid \exists \ u \in U_i \text{ s.t } E=E(u) \}
\end{equation*}
be the set of the energies of curves passing through the neck and connecting the two chosen fixed points. One can think of $ED_i$ as the set of differences \[E(u^i_q)=\mathcal{A}_{h^{\nu_i}}\left(\widetilde{x_+}^i_q\right)-\mathcal{A}_{h^{\nu_i}}\left(\widetilde{x_-}\right),\] where $\widetilde{x_-}$ is a chosen lift of $x_-$ and $\widetilde{x_+}^i_q$ is the induced lift of $x_+$ by the lift of $u^i_q \in U_i$ starting at $\widetilde{x_-}$.\\
Clearly each $ED_i$ is a subset of the real numbers (strictly) bounded below by $0$, hence it admits a non-negative infimum 
\[\overline{E}_i:=\inf ED_i\geq 0.\] 
We have two possible cases:
\begin{enumerate}
    \item $\left[\lim_{i \to \infty}\overline{E}_i=\infty\right]$ In this case there is nothing to prove. We can find a subsequence of neck-stretch parameters, which we will still denote by $\{\nu_i\}_i$, such that the energy required to go through the neck increases monotonically to $+\infty$. Therefore we can simply set
    \[ \lambda_i:=\overline{E}_i.\]
    \item $\left[ \exists \Upsilon \in \R \text{ s.t. } \overline{E}_i\leq \Upsilon \right]$ By the definition of infimum, after possibly choosing a slightly bigger constant which we will still denote by $\Upsilon$ for convenience, for each neck-stretch parameter $i$ we can find $u^i\in U_i$, such that $E(u^i)\leq \Upsilon$. From now on $\{u^i\}_i$ will denote a sequence of curves $u^i \in U_i$ satisfying \[\forall i, \ E(u^i) \leq \Upsilon.\]
    Let $\{u^i\}_i$ be our sequence of solutions to Eq. \ref{eq floer symplectomorphism stretched} with bounded energy constructed above. For each $t \in [0,1]$ we define
        \begin{align*}
            L_i(t) &:= \left\lbrace r \in \R \mid u^i(r,t) \in \left[-\varepsilon/2, \varepsilon/2 \right]\times S(V)\right\rbrace
        \end{align*}
    and we denote with $L_i$ their union
    \begin{align*}
        L_i &:= \bigcup_{t\in [0,1]} L_i(t)\times \{t\} \subset \R \times [0,1].
    \end{align*}
    It is immediate to see that for each $t \in [0,1]$ and for each $i$,  the set $L_i(t)$ is measurable with respect to the Lebesgue measure $\mu$ on $\R$.
    \begin{figure}[H]
    \begin{center}
    \includegraphics[scale=.7]{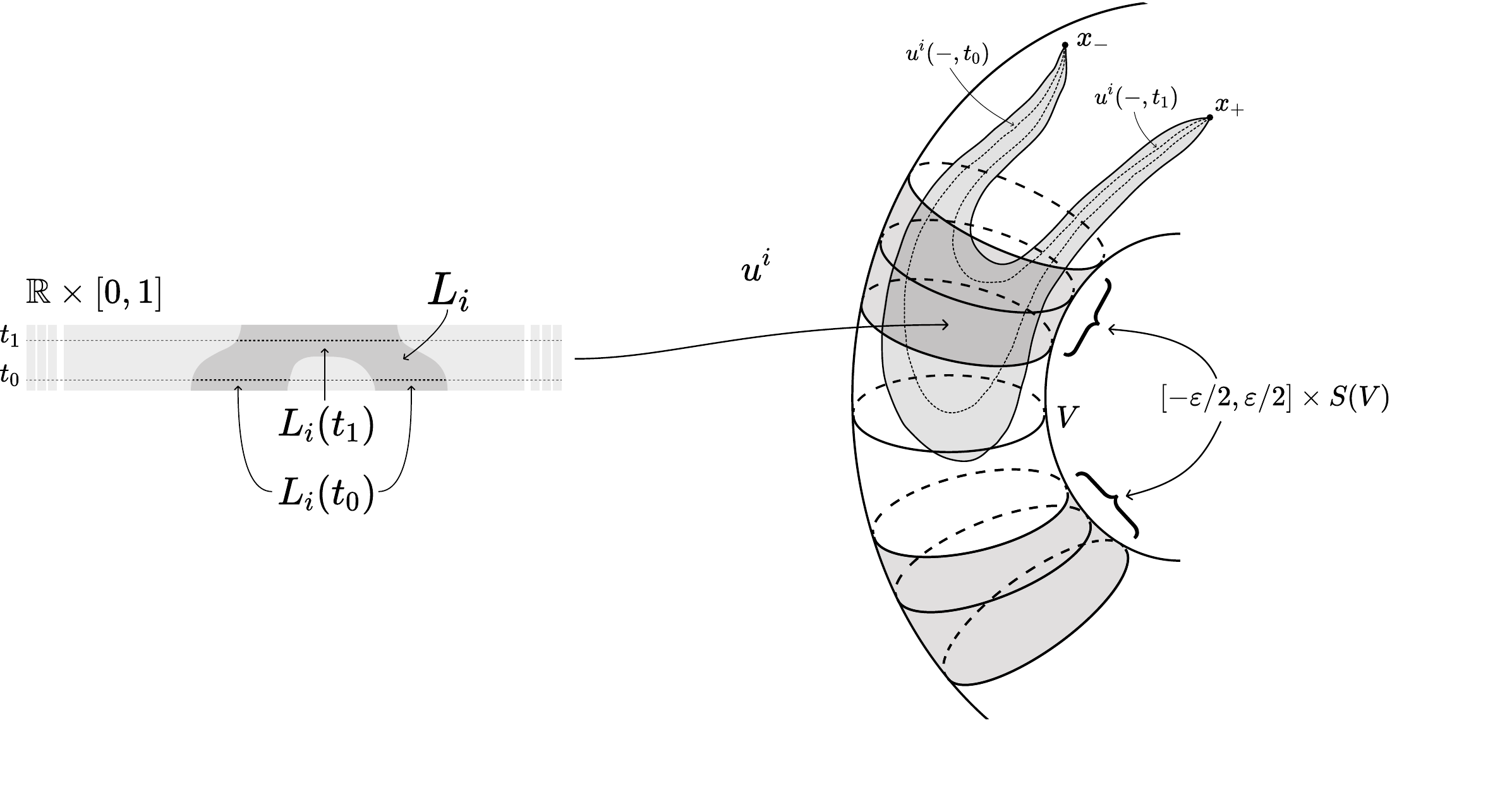}
    \captionof{figure}{The sets $L_i(t_0),L_i(t_1)$ and $L_i$ associated to the curve $u^i$} \label{fig:figLi}
    \end{center}
    \end{figure}
    
    \begin{prop} There exists a constant $C>0$ such that, $\forall i \in \N$, we have a uniform bound on the $L^{\infty}$-norm of the differentials $\{du^i\}_i$ over $L_i$. In other words,
    \begin{equation*}
     \| \restr{du^i}{L_i}\|^{\infty}_{g^{\nu_i}_t}:=\hspace{0.1cm}\sup_{(r,t)\in L_i}\lvert du^i(r,t)\rvert_{g^{\nu_i}_t}^{op} \leq C.
    \end{equation*}
    \end{prop}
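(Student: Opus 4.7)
The statement is a uniform $L^\infty$-gradient estimate for $J^{\nu_i}$-holomorphic strips with bounded energy, and my plan is to prove it by contradiction via the standard bubbling-off/rescaling scheme, with careful attention to the fact that both the almost complex structures $J^{\nu_i}$ and the induced metrics $g^{\nu_i}$ vary with $i$. The first step is to reduce the analysis to a fixed ambient model. Assume that no uniform $C$ exists; then, passing to a subsequence, there are points $(r_i,t_i)\in L_i$ with $K_i := \lvert du^i(r_i,t_i)\rvert^{op}_{g^{\nu_i}_{t_i}} \to \infty$. Via the symplectic embedding $\Phi_{\nu_i}$ of Remark \ref{rem exact form on neck}, I would pull back the portion of $u^i$ landing in the neck to
\[ \tilde u^i := \Phi_{\nu_i}^{-1}\circ u^i \colon \Omega^i \longrightarrow I_{\nu_i}\times S(V) \subset \R\times S(V). \]
By Remark \ref{rem exact form on neck} the map $\Phi_{\nu_i}$ is an isometry onto its image; by Remark \ref{rem induced metric on neck} the induced metric on the neck is $t$-independent; and by construction the pulled-back almost complex structure is the fixed, $\R$-translation-invariant, $\eta$-adjusted cylindrical almost complex structure $J_{\mathrm{cyl}}$ on the symplectization $(\R\times S(V),\, d(e^{s}\alpha))$. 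Hence $\{\tilde u^i\}$ is a sequence of $J_{\mathrm{cyl}}$-holomorphic maps into a fixed target, with $\omega$-energy bounded uniformly by $\Upsilon$ and with gradient blow-up at $(\tilde r_i,\tilde t_i):=\Phi_{\nu_i}^{-1}(r_i,t_i)$.

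Next I would apply Hofer's lemma to produce nearby points $(\tilde r_i',\tilde t_i')$ and adjusted values $K_i'\to+\infty$ so that $|d\tilde u^i|\leq 2K_i'$ on a disk of radius $\varepsilon_i/K_i'$ with $\varepsilon_iK_i'\to+\infty$, and then rescale:
\[ v_i(\zeta) := \tilde u^i\!\left(\tilde r_i'+\tfrac{\operatorname{Re}\zeta}{K_i'},\ \tilde t_i'+\tfrac{\operatorname{Im}\zeta}{K_i'}\right). \]
Composing with a translation of the target in the $\R$-direction of the symplectization, which is an isometry preserving $J_{\mathrm{cyl}}$, I normalize so that $v_i(0)\in\{0\}\times S(V)$. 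The maps $v_i$ are $J_{\mathrm{cyl}}$-holomorphic on disks $B_{R_i}$ with $R_i := \varepsilon_iK_i'\to+\infty$, satisfy $|dv_i(0)|=1$ and $|dv_i|\leq 2$, and have $\omega$-energy uniformly bounded by $\Upsilon$. Elliptic bootstrapping plus Arzelà-Ascoli then provide a subsequential $C^\infty_{\mathrm{loc}}$-limit $v\colon\C\to\R\times S(V)$, a non-constant $J_{\mathrm{cyl}}$-holomorphic plane of finite $\omega$-energy.

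The last step, and the main obstacle, is to rule out the existence of such a $v$. The exactness of $d(e^{s}\alpha)$ kills the easy alternative: if $v$ has precompact image, then removable singularity at infinity extends it to a pseudo-holomorphic sphere of zero $\omega$-area, hence a constant, contradicting $|dv(0)|=1$. The delicate case is the one covered by Hofer's theorem on finite-energy punctured curves in symplectizations, in which $v$ is asymptotic at infinity to a closed Reeb orbit $\gamma$ of $S(V)$. To exclude this I would exploit the fact that $\tau_p^{\nu_i}$ has no fixed points on the necks (Remark \ref{rem sign morse function}) and reduces, by Lemma \ref{lem Hamiltonian flow of morse function}, to a Reeb-direction translation in a neighborhood of $V$; together with an SFT-style action-energy accounting, the asymptotic orbit $\gamma$ would absorb a fixed positive action quantum from the strips $u^i$, and combining this with the uniform bound $E(u^i)\leq \Upsilon$ and the divergence $\nu_i\to+\infty$ of the neck length would contradict the presence of a non-trivial Reeb asymptotic in the limit. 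This last, Reeb-asymptotic step is where I expect the argument to require the most care.
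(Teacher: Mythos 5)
Your overall scheme (contradiction, rescaling at blow-up points, passing to a non-constant finite-area plane in the symplectization $\R\times S(V)$) is the same as the paper's, which runs the bubbling analysis via \cite[Lemma 5.11]{BEHWZ03} on balls $B_{\min(1,d_i/C_i)}$ that stay in the neck. The difference, and the genuine gap, is in the endgame. The paper closes the argument by observing that the limit plane has finite area with respect to the \emph{exact} symplectization form $d(e^s\alpha)$, so Sikorav's removable singularity theorem turns it into a pseudo-holomorphic sphere, which exactness forces to be constant, contradicting non-constancy. You instead split into two cases and leave the crucial one open: for the Reeb-asymptotic alternative you only sketch an ``SFT-style action-energy accounting'' in which the asymptotic orbit ``absorbs a fixed positive action quantum,'' combined with the divergence of the neck length, and you explicitly defer the details. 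As written this is not a proof, and the proposed mechanism is doubtful: the uniform bound $\Upsilon$ is on the $\omega$-area, and on the stretched neck $\omega=d(e^{\beta_{\nu}(s)}\alpha)$ with $\beta_{\nu}'$ of order $\varepsilon/(2\nu+\varepsilon)$, so crossing the neck costs arbitrarily \emph{little} symplectic area -- this is precisely why the paper's Theorem 3.2 has to extract its lower bound from the stretched metric (via the gradient bound and the measure/length estimates), not from an area quantum at the neck. So there is no obvious ``fixed action quantum'' available from the $u^i$.

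The case you call delicate can in fact be closed without any neck-length accounting, and this is implicitly what the paper's finite-area hypothesis does: the quantity that survives the rescaling is the area of the plane with respect to $d(e^s\alpha)$, bounded by $\Upsilon$. A non-constant finite-Hofer-energy plane in a symplectization must be asymptotic to a closed Reeb orbit through a \emph{positive} puncture (by Stokes, $0\le\int v^*d\alpha=\pm\int_{\gamma}\alpha$ forces the positive sign), and along a positive end the factor $e^s$ blows up, so $\int v^*d(e^s\alpha)=\lim_R\int_{\partial B_R}v^*(e^s\alpha)=+\infty$, contradicting the bound by $\Upsilon$. With that observation the Reeb-asymptotic case is excluded outright, and one is back to the precompact-image situation you already handle by removable singularity plus exactness -- i.e.\ the paper's argument. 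To make your write-up correct you must either insert this Stokes/positive-puncture argument (or an equivalent one) in place of the deferred ``action quantum'' step, or follow the paper and argue directly that finite $d(e^s\alpha)$-area permits the removable singularity theorem, after which exactness finishes the proof. A secondary point to tighten: when you rescale you should, as the paper does via the distances $d_i\to\infty$ to $M'\cup D(V)$, check that the balls on which you apply Hofer's lemma have images that remain in the neck region, so that the fixed cylindrical model $(\R\times S(V),J_{\mathrm{cyl}})$ is really the target of the limit.
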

    \begin{proof}
        We proceed by contradiction. Let us assume that there exists a sequence of points $\{z_i\}_i$, $z_i:=(r_i,t_i) \in L_i$ such that 
        \[\lim_{i\to \infty}\hspace{0.1cm} \lvert du^i(z_i)\rvert_{g^{\nu_i}_t}^{op} =+\infty,\]
        and let us denote with $C_i$ the value $\lvert du^i(z_i)\rvert_{g^{\nu_i}_t}^{op}\in \R_{\geq 0}$. After possibly passing to a subsequence we can assume that we have a monotone sequence of non-negative real numbers $\{C_i\}_i$ such that
    \begin{equation*}
        \lim_{i \to \infty} C_i=+\infty.
    \end{equation*} 
    We want to apply the bubbling analysis explained in \cite[Lemma 5.11]{BEHWZ03} for maps into a family of stretched manifolds. Notice that here we are dealing with strips (which have \textit{boundary}), a case not considered in \cite{BEHWZ03}. Thanks to the periodicity condition on the solutions of Eq. \ref{eq floer symplectomorphism stretched}, we can think of them as maps whose domain is $\R\times \R$ (as in \cite{DoSa94} or \cite{Sei96}). Moreover, the cylindrical interpretation of solutions to Eq. \ref{eq floer symplectomorphism stretched} as sections of $\R \times M_{\tau_p^{\nu_i}}$ should further convince the reader that there are no problems with the boundary of the strips.\\
    If we define
    \[d_i := d^{\nu_i}\left(u^i(z_i),M'\cup D(V)\right) \]
    then
    \[\lim_{i\to \infty} d_i=+\infty.\]
    We apply then \cite[Lemma 5.11]{BEHWZ03} to the family of functions
    \[\restr{u^i(z+z_i)}{B_{\min(1,d_i/C_i)}(0)} : B_{\min(1,d_i/C_i)}(0)\to \left([-\varepsilon,\varepsilon]\times S(V),J^{\nu_i}\right)\footnote{In the case $\lim_i d_i/C_i=0$, in the notation of the aforementioned lemma, we take $\delta_n :=d_n/C_n$ and the proof follows.}\]
    and get a \textit{non-constant} holomorphic map 
    \begin{equation*}
        \widetilde{u} : \C \to \R \times S(V) \text{ s.t. } E(\widetilde{u})\leq \Upsilon.
    \end{equation*}
    Since $\R \times S(V)$ with the induced symplectic form is the symplectisation of a closed contact ma\-ni\-fold, by the removable singularity theorem by J.-C. Sikorav (\cite[Theorem 4.5.1]{Aud94} together with example 4 pg. 179 of the same reference), we obtain a \textit{non-constant} pseudo-holomorphic sphere (still denoted with $\widetilde{u}$)
    \begin{equation*}
        \widetilde{u} : S^2 \to \R \times S(V).
    \end{equation*}
    On the other hand, $\widetilde{u}$ has to be constant due to the exactness of the symplectic form on the neck. We have then found the required contradiction and therefore we can conclude that there cannot be such a sequence of points $\{z_i\}_i$ over which the norm of the differentials keeps increasing.
    \end{proof}
    The next lemma provide us a crucial estimate on the distance of certain pairs of points in the neck of $S(V)$:
    \begin{lemma}\label{claim min distance is lambda} Let $\rho >0$ be the constant chosen in Prop. \ref{prop existence of Morse functions}.  For any $x\in(-\varepsilon,\varepsilon)\times S(V)$,  we have \[  d^{\nu_i}(x, \psi_{1}^{\nu_i}(x))= \rho,\] 
    where $d^{\nu_i}$ is the distance induced by the metric $g^{\nu_i}$.
    \end{lemma}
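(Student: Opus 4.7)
The plan is to establish the equality by bounding $d^{\nu_i}(x, \psi_1^{\nu_i}(x))$ from above and below by $\rho$.

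For the upper bound, the natural candidate path is the Hamiltonian trajectory itself, namely $\gamma(t) := \psi_t^{\nu_i}(x)$ for $t \in [0,1]$. By Lemma \ref{lem Hamiltonian flow of morse function}, $\gamma$ lies on a single Reeb orbit in the neck and has velocity $X_{\nu_i} = \sigma_l \rho \mathbf{R}$. Since $J^{\nu_i}$ is $g^{\nu_i}$-isometric (being compatible with $\omega$ and inducing $g^{\nu_i}$) and since $X_{\nu_i}$ differs from $\nabla_{\nu_i}h^{\nu_i}$ by $\pm J^{\nu_i}$, we have $\|X_{\nu_i}\|_{g^{\nu_i}} = \|\nabla_{\nu_i}h^{\nu_i}\|_{g^{\nu_i}}$. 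By Proposition \ref{prop existence of Morse functions} combined with Remark \ref{rem arclength}, the gradient is $\sigma_l\rho$ times the unit-speed tangent $\dot{\gamma}_{\nu_i}$, so $\|\nabla_{\nu_i}h^{\nu_i}\|_{g^{\nu_i}} \equiv \rho$ on the neck. Integrating gives $L(\gamma) = \int_0^1 \|X_{\nu_i}\|_{g^{\nu_i}}\,dt = \rho$, hence $d^{\nu_i}(x, \psi_1^{\nu_i}(x)) \le \rho$.

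For the lower bound, the strategy is to exhibit a $1$-Lipschitz function $f$ defined on a neighbourhood $\mathcal{U}$ of $\gamma$ in $M$ satisfying $f(\psi_1^{\nu_i}(x)) - f(x) = \rho$; then any path $\tilde{\gamma}$ from $x$ to $\psi_1^{\nu_i}(x)$ staying inside $\mathcal{U}$ will automatically satisfy $L(\tilde{\gamma}) \ge \rho$, while any path that exits $\mathcal{U}$ can be assumed not to be length-minimising (for $\rho$ small enough, which we are free to arrange). A natural candidate for $f$ is the ``Hamiltonian time'' coordinate: thicken $\gamma$ to a flow-box, use the Hamiltonian flow lines to define a projection onto the orbit, and take $f$ to be the arc-length parameter along the orbit. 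The orthogonal splitting of $g^{\nu_i}$ on the neck coming from the $\eta$-adjusted structure, namely $TM|_{\text{neck}} = \xi \oplus \text{span}(\partial_s,\mathbf{R})$ with the two summands $g^{\nu_i}$-orthogonal, guarantees that differentiating $f$ in any direction transverse to $\gamma$ gives zero, while differentiating in the Reeb direction gives a unit vector (after the normalisation $\|X_{\nu_i}\|=\rho$). Thus $|\nabla f|_{g^{\nu_i}} = 1$ and $f(\psi_1^{\nu_i}(x)) - f(x) = \rho$ by construction, giving the reverse inequality $d^{\nu_i}(x, \psi_1^{\nu_i}(x)) \ge \rho$.

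The main obstacle is the construction of the Lipschitz function $f$, and more specifically the verification that the Reeb orbits at a fixed $s$-level are geodesics of the restricted metric (so that the flow-box projection is well defined and length-nonincreasing). This reduces to an explicit computation of the Christoffel symbols of $g^{\nu_i}$ along the Reeb direction in the adjusted-acs coordinates, analogous to the one performed in Example \ref{ex stretched metric surface}; the orthogonality of $\xi$ and $\text{span}(\partial_s,\mathbf{R})$ together with the fact that $\mathcal{L}_{\mathbf{R}}\alpha = 0$ makes the required covariant derivative vanish along $\gamma$. Taking $\rho$ small enough so that $\gamma$ lies in an injectivity-radius ball of $g^{\nu_i}$ (uniformly in $\nu_i$, which is possible since the metric coefficients are controlled on the neck by the parameters of $\beta_{\nu_i}$) then allows us to disregard paths exiting the flow-box, completing the argument.
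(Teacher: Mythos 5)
Your upper bound is fine and coincides with what the paper uses implicitly: the Hamiltonian trajectory itself joins $x$ to $\psi_1^{\nu_i}(x)$ and has $g^{\nu_i}$-length $\rho$, since $\|X_{\nu_i}\|_{g^{\nu_i}}=\|\nabla_{\nu_i}h^{\nu_i}\|_{g^{\nu_i}}=\rho$ on the neck. For the lower bound, though, the paper takes a shorter route: it transports the problem through the isometry $\Phi_{\nu_i}$ to the cylindrical model $I_{\nu_i}\times S(V)$ and rests on the single geometric assertion that integral lines of the Reeb field are minimizing geodesics there, so that the two points are joined by a minimizing segment of length $\rho$. Your calibration scheme (a $1$-Lipschitz ``time along the orbit'' function) is a legitimate alternative in principle, but the way you close the argument has genuine gaps.

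Concretely: (i) the injectivity radius of $g^{\nu_i}$ is \emph{not} uniformly bounded below on the neck. From the formula in Example \ref{ex stretched metric surface}, $g^{\nu_i}(\mathbf{R},\mathbf{R})=e^{q}\,\beta_{\nu_i}'\bigl(\beta_{\nu_i}^{-1}(q)\bigr)$, which near $q=0$ is of order $\varepsilon/(2\nu_i+\varepsilon)$; since the Reeb flow on $S(V)$ has all orbits closed, the Reeb circles through such points have $g^{\nu_i}$-length tending to $0$ as $\nu_i\to\infty$ --- the stretched neck becomes long in the radial direction but \emph{thin} in the Reeb direction, so ``uniformly in $\nu_i$, since the metric coefficients are controlled by $\beta_{\nu_i}$'' is exactly what fails. (ii) You are not free to shrink $\rho$ with $\nu_i$: $\rho$ is fixed once and for all in Prop. \ref{prop existence of Morse functions}, before any stretching, and the final estimate $E(u^i)\geq \frac{R_i}{C}\rho^2$ in the proof of Theorem \ref{thm lower bound on energy for strips} needs it to stay fixed as $\nu_i\to\infty$. (iii) Independently of (i)--(ii), when the closed Reeb orbit through $x$ is shorter than $\rho$ the construction of $f$ breaks down: the orbit segment of length $\rho$ wraps around the closed orbit, the flow-box is no longer embedded, and no single-valued function on a neighbourhood of a closed orbit can satisfy $df(\text{unit Reeb})=1$ along it. This short-fiber regime, deep in the neck for large $\nu_i$, is precisely the delicate case; the paper's proof dispenses with it by the one-line appeal to minimizing Reeb geodesics in the cylindrical model, whereas your argument as written fails there, so if you want to pursue this route you must treat that regime explicitly rather than by an injectivity-radius/smallness-of-$\rho$ reduction.
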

    \begin{proof}
        For simplicity let us assume that $\sigma=1$: the proof in the negative case follows by replacing $x$ with $\big(\psi_{1}^{\nu_i}\big)^{-1}$.  On $(-\varepsilon,\varepsilon)\times S(V)$, Lemma \ref{lem Hamiltonian flow of morse function} shows that the Hamiltonian vector field $X_{\nu_i}$ of $h^{\nu_i}$ is the vector field $\rho\mathbf{R}$,  i.e.  a (positive) multiple of the Reeb vector field.  According to Rem.  \ref{rem induced metric on neck},  $M$ comes equipped with the metric $g^{\nu_i}$.  Since this metric is the one induced by $J^{\nu_i}$ on $(-\varepsilon,\varepsilon)\times S(V)$,  the embedding 
        \[\Phi_{\nu_i} : I_{\nu_i} \times S(V) \to M\] 
        is an isometry. \\ In $I_{\nu_i} \times S(V)$ the distance between a point $x$ and $\psi_{1}^{\nu_i}(x)$ must be at least $\rho$,  thanks to the fact that the integral lines of the Reeb vector field are minimising geodesics and the two points are clearly connected by one of those whose length is $\rho$.
    \end{proof}
    Since the constant $\rho>0$ can be arbitrarily small,  let us assume it is smaller than the minimum among the geodesic radial length of $\left(2/3\varepsilon, \varepsilon \right)\times S(V)$.  Since we are not changing the almost complex structure there,  the metric is preserved and such length is preserved for each neck-stretch parameter.\\
    \begin{lemma}\label{cor Ji subset Ii} For every $t \in [0,1]$ and for each $r\in L_i(t)$ we have 
    \[\rho \leq\int_0^1 \lvert du^i(r,s)\rvert_{g^{\nu_i}_s}^{op} ds.\]
    \end{lemma}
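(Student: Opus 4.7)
Since the operator norm of $du^i$ dominates the norm of any partial derivative, $\lvert du^i(r,s)\rvert^{op}_{g^{\nu_i}_s} \geq \lvert \partial_s u^i(r,s)\rvert_{g^{\nu_i}_s}$, so it suffices to show that the path $\gamma(s):=u^i(r,s)$, $s \in [0, 1]$, has length $\ell(\gamma)$ at least $\rho$. The hypothesis $r \in L_i(t)$ gives $\gamma(t) \in [-\varepsilon/2, \varepsilon/2] \times S(V)$, which lies strictly inside the open neck $U := (-\varepsilon, \varepsilon) \times S(V)$; on $U$ the metric is $s$-independent and equals $g^{\nu_i}$ by Remark \ref{rem induced metric on neck}. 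I split into two cases, according to whether $\gamma$ ever leaves $U$.

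\emph{Case A: $\gamma([0,1]) \subset U$.} Because the support of $\tau_V$ is contained in $D(V)$, the map $\tau_V$ restricts to the identity on $U$, so on $U$ one has $\tau_p^{\nu_i} = \psi_1^{\nu_i}$. By Lemma \ref{lem Hamiltonian flow of morse function} the latter is a translation in the Reeb direction and, in particular, preserves $U$. The Floer periodicity $\gamma(0) = \tau_p^{\nu_i}(\gamma(1))$ therefore reduces to $\gamma(0) = \psi_1^{\nu_i}(\gamma(1))$, and Lemma \ref{claim min distance is lambda} gives $d^{\nu_i}(\gamma(0), \gamma(1)) = \rho$. Hence $\ell(\gamma) \geq d^{\nu_i}(\gamma(0), \gamma(1)) = \rho$.

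\emph{Case B: there exists $s^* \in [0,1]$ with $\gamma(s^*) \notin U$.} Up to reflecting the argument, let $s_\star \in (t, 1]$ be the first time $\gamma$ meets $\partial U = \{\pm \varepsilon\} \times S(V)$, and assume $q(\gamma(s_\star)) = \varepsilon$. The intermediate value theorem applied to the continuous function $s \mapsto q(\gamma(s))$ on $[t, s_\star]$ shows that the $q$-coordinate sweeps through the interval $[2\varepsilon/3, \varepsilon]$, so $\gamma|_{[t, s_\star]}$ has $g^{\nu_i}$-length at least the radial geodesic distance from $\{2\varepsilon/3\} \times S(V)$ to $\{\varepsilon\} \times S(V)$. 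On the outer annulus, $\beta_{\nu_i}$ is a pure shift, so the metric coincides with the original one (and in particular is independent of $\nu_i$); the choice of $\rho$ made immediately before the statement ensures that $\rho$ is at most this radial distance. Thus $\ell(\gamma) \geq \rho$.

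Combining the two cases yields the desired inequality. The only delicate point is Case B: identifying the correct sub-arc of $\gamma$ on which to run the intermediate value theorem, and then exploiting the fact that the neck-stretch leaves the metric on $[2\varepsilon/3, \varepsilon] \times S(V)$ untouched, so that the chosen upper bound on $\rho$ is compatible with every index $i$.
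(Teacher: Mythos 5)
Your proof is correct and takes essentially the same route as the paper's: reduce the claim to a length bound for the path $s\mapsto u^i(r,s)$, handle paths confined to the neck via Lemma \ref{claim min distance is lambda} (after observing that $\tau_p^{\nu_i}$ agrees with $\psi_1^{\nu_i}$ there, since the twist is supported in $D(V)$), and handle paths that leave it by the forced crossing of the outer collar $\left[2\varepsilon/3,\varepsilon\right]\times S(V)$, whose metric is untouched by the stretching, combined with the standing smallness assumption on $\rho$. The only cosmetic difference is your case split (entire path inside the neck versus the paper's $u^i(r,0)$ inside the neck), which if anything is slightly cleaner because every length comparison then takes place where the metric is $s$-independent.
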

    \begin{proof}
        Let $r\in L_i(t)$: if $u^i(r,0)\in (-\varepsilon,\varepsilon)\times S(V)$ then the claim is an immediate consequence of Lemma \ref{claim min distance is lambda} since $\tau^{\nu_i}_pu^i(r,1)=u^i(r,0)$.  If $u^i(r,0)\notin (-\varepsilon,\varepsilon)\times S(V)$,  since there must be $t$ such that $u^i(r,t) \in \left[-\varepsilon/2, \varepsilon/2 \right]\times S(V)$ by definition of $L_i(t)$.  Let $[t_0,t_1]$ be a connected component contained in 
        \[u^i(r,-)^{-1}\left(\left(\left[-\varepsilon, -\frac{2}{3}\varepsilon\right]\cup \left[\frac{2}{3}\varepsilon, \varepsilon\right]  \right)\times S(V)\right). \] Then
        \[ \rho \leq \int_{t_0}^{t_1} \lvert du^i(r,s)\rvert_{g^{\nu_i}}^{op} ds \leq  \int_0^1 \lvert du^i(r,s)\rvert_{g^{\nu_i}_s}^{op} ds\]
        since the geodesic radial length of $\left(2/3\varepsilon, \varepsilon \right)\times S(V)$ is greater than $\rho$.  We remind the reader that we dropped the $s$-dependency in the first integral thanks to Rem. \ref{rem induced metric on neck}.
    \end{proof}
    Since by assumption our family of solutions passes through $\{0\}\times S(V)$, from now on we will only consider those $t\in [0,1]$ such that the path \[ u^i(-,t) : \R \to M\] intersects $ \{0\}\times S(V)$. 
    \begin{lemma}\label{claim Ji non empty} Let $\mu$ be the Lebesgue measure on $\R$, let $R_i$ be the geodesic (radial) length of the neck $[-\varepsilon/2,\varepsilon/2]\times S(V)$. For any $i$ and $t$ such that $L_i(t)$ is non-empty, we have
    \[ \ \mu(L_i(t)) \geq \frac{R_i}{C}.\]
    \end{lemma}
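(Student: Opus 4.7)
The plan is to combine the uniform $L^\infty$-bound on $du^i$ over $L_i$ from the preceding proposition with a lower bound on the arc-length of $r \mapsto u^i(r, t)$ coming from the geometry of the stretched neck.

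First, using the standing hypothesis, I would pick some $r_0 \in L_i(t)$ with $u^i(r_0, t) \in \{0\}\times S(V)$ and let $[a, b] \subset L_i(t)$ denote the connected component containing $r_0$. Because the fixed points $x_\pm$ of $\tau_p^{\nu_i}$ lie outside the slab $[-\varepsilon/2, \varepsilon/2]\times S(V)$ (cf.\ Rem.\ \ref{rem sign morse function}), the interval $[a, b]$ is compact, and by continuity $u^i(a, t), u^i(b, t)$ must lie on the boundary $\{\pm\varepsilon/2\}\times S(V)$ of the slab. I would then estimate from below the arc-length of the curve on $[a, b]$ via the triangle inequality in $(M, g^{\nu_i})$:
\[
\int_a^b \lvert \partial_r u^i(r, t)\rvert_{g^{\nu_i}_t}\, dr \;\geq\; d^{\nu_i}\bigl(u^i(a, t), u^i(r_0, t)\bigr) + d^{\nu_i}\bigl(u^i(r_0, t), u^i(b, t)\bigr).
\]
Thanks to the warped-product form of the stretched metric on the neck described in Ex.\ \ref{ex stretched metric surface} (and its obvious higher-dimensional analogue), the radial geodesic realizes the distance between any two level sets of the Liouville coordinate; each summand on the right is then bounded below by the radial length of the corresponding piece of the slab, and a short case analysis on whether $u^i$ exits at $a$ and $b$ through the same or through opposite sides of $\{0\}\times S(V)$ shows that their sum is at least the full geodesic radial length $R_i$ of the slab.

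Combining this with the uniform bound $C$ from the previous proposition, I would then conclude
\[R_i \;\leq\; \int_a^b \lvert \partial_r u^i(r, t)\rvert_{g^{\nu_i}_t}\, dr \;\leq\; C(b - a) \;\leq\; C\,\mu(L_i(t)),\]
which is exactly the claimed inequality. The main subtlety I anticipate is the metric step: one has to use the explicit warped-product description of $g^{\nu_i}$ (not just its qualitative properties) to ensure that both boundary configurations genuinely produce the lower bound $R_i$ and not merely a half of it, so the formulas in Ex.\ \ref{ex stretched metric surface} play an essential role. Everything else is a straightforward combination of the triangle inequality with the $C^0$-bound on the differential.
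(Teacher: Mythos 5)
Your proposal is correct and takes essentially the same route as the paper: both bound the length of $u^i(\cdot,t)$ inside the slab $[-\varepsilon/2,\varepsilon/2]\times S(V)$ below by $R_i$ via the same two-case analysis (exiting through opposite sides, or entering and returning through the same side after reaching $\{0\}\times S(V)$, each leg costing the radial distance $R_i/2$), and above by $C\,\mu(L_i(t))$ using the uniform bound on $du^i$ over $L_i$. The paper phrases this as a contradiction on all of $L_i(t)$ rather than a direct triangle-inequality estimate on the connected component containing $r_0$, but the content is identical.
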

    \begin{proof}
        We already observed that $L_i(t)$ is a Lebesgue-measurable, non-empty subset of $\R$. To prove the lower-bound on its measure, let us argue by contradiction. The smooth path \[ u^i(-,t) : L_i(t) \to M\] either intersects both connected components of \[\partial \left([-\varepsilon/2,\varepsilon/2]\times S(V)\right)=\{\pm\varepsilon/2\}\times S(V),\] or it starts at one of them, reaches $\{0\}\times S(V)$ and then ends at the same connected component.  In both cases,  its length will be bounded above by $C\mu(L_i(t)) < R_i$.  This contradicts the fact that the geodesic distance of $\{\pm\varepsilon/2\}\times S(V)$ from $\{0\}\times S(V)$ is $R_i/2$.
    \end{proof}
    We are now ready to conclude the proof of Theorem \ref{thm lower bound on energy for strips} by showing that Case $2$ cannot happen in our setting. Lemma \ref{cor Ji subset Ii} together with the Cauchy-Schwarz inequality implies for every $r \in L_i(t)$
    \begin{align*}
        \rho &\leq \int_0^1 \lvert du^i(r,s)\rvert_{g^{\nu_i}_s}^{op} ds \leq \sqrt{\int_0^1 \left(\lvert du^i(r,s)\rvert_{g^{\nu_i}_s}^{op}\right)^2ds}.
    \end{align*} 
    Let $t_i\in [0,1]$ be any value such that the path $u^i(-,t_i)$ intersects $\{0\}\times S(V)$.  Using Fubini-Tonelli we have:
    \begin{align*}
        E(u^i)&= \int_{\R \times [0,1]} \left(\lvert du^i(r,s)\rvert_{g^{\nu_i}_s}^{op}\right)^2dsdr \\
        &\geq \int_{L_i(t_i) \times [0,1]} \left(\lvert du^i(r,s)\rvert_{g^{\nu_i}_s}^{op}\right)^2dsdr\\ &=  \int_{L_i(t_i)}\left(\int_0^1\left(\lvert du^i(r,s)\rvert_{g^{\nu_i}_s}^{op}\right)^2ds\right)dr\\ 
        &\geq \frac{R_i}{C}\rho^2
    \end{align*}
    where in the last step we used Lemma \ref{claim Ji non empty}. Notice that the lower-bound does not depend on the chosen $u^i \in U_i$, and it clearly tends to infinity for increasing neck-stretch parameter $i$ since 
    \[ \lim_{i\to +\infty} R_i = +\infty.\]
    This is a contradiction with the assumption that the energy of the family $\{u_i\}_i$ was uniformly bounded above by a constant $\Upsilon$.
\end{enumerate}
\end{proof}

\section{The energy filtration and modified Floer cohomology}
In this section we adapt \cite[Section 3]{Ono95} to our setting. \\ \noindent Since $[\omega]$ is a rational cohomology class,  by multiplying it with a suitable scalar we can assume that $[\omega] \in H^2(M;\Z)$.  This implies that the action functional has a discrete action spectrum.  Since every discrete subset of the reals must be countable,  for each Hamiltonian $H$,  the action functional $\mathcal{A}_H$ will have at most countably many critical values.\\
\begin{lemma}\label{lem symplectic form rational is rational in mapping torus} Let $(M,\omega)$ be a symplectic manifold with integral symplectic form,  and let $\tau$ denote the composition of Dehn twists.  Then the class \[ [\omega_{\tau}] \in H^2(M_{\tau}; \R) \] induced by $\omega$ lifts to integer valued cohomology
\end{lemma}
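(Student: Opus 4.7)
The plan is to show $[\omega_\tau]$ is integral by verifying that it has integer periods against every integral $2$-cycle in $M_\tau$, which by the de Rham theorem is equivalent to lying in the image of $H^2(M_\tau;\Z)\to H^2(M_\tau;\R)$. The fibration $M \hookrightarrow M_\tau \to S^1$ yields the Wang sequence in integral homology
\[ H_2(M;\Z) \xrightarrow{\tau_* - \mathrm{id}} H_2(M;\Z) \xrightarrow{j_*} H_2(M_\tau;\Z) \xrightarrow{\partial} H_1(M;\Z) \xrightarrow{\tau_*-\mathrm{id}} H_1(M;\Z),\]
so every class in $H_2(M_\tau;\Z)$ is, modulo boundaries, the sum of one in the image of $j_*$ and one determined (via $\partial$) by an invariant class in $\ker(\tau_* - \mathrm{id}) \subseteq H_1(M;\Z)$. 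I would handle the two types of generator separately.

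For a ``fibre'' class $j_*\sigma$ with $\sigma \in H_2(M;\Z)$, one has $j^*\omega_\tau = \omega$, so
\[ \int_{j_*\sigma} \omega_\tau = \int_\sigma \omega \in \Z\]
by the integrality hypothesis on $[\omega]$, and there is nothing else to check for these classes.

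For the ``suspension'' classes, the crucial geometric step is this: since each Lagrangian sphere $V_l$ has dimension $n \geq 2$ (which holds because $\dim M \geq 4$), a generic $1$-cycle representative $\gamma$ of a class in $\ker(\tau_*-\mathrm{id})$ is disjoint from $\bigcup_l V_l$, and can then be pushed out of the disjoint Weinstein neighbourhoods $T^*_\delta V_l$ using the deformation retraction of $T^*_\delta V_l \setminus V_l$ onto its boundary. With such a $\gamma$, one has $\tau\gamma = \gamma$ pointwise since $\gamma$ avoids the support of every $\tau_{V_l}^{\sigma_l}$, so the suspension
\[ \Sigma_\gamma \colon S^1 \times S^1 \to M_\tau, \quad (t,s) \mapsto [(t,\gamma(s))]\]
is a well-defined smooth $2$-cycle representing a lift of $[\gamma]$ under $\partial$. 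Since $\omega_\tau$ is the descent of the pullback $\pi_M^*\omega$ from $\R \times M$, the computation $\Sigma_\gamma^*\omega_\tau(\partial_t,\partial_s) = \omega(0,\gamma'(s)) = 0$ shows $\int_{\Sigma_\gamma}\omega_\tau = 0 \in \Z$.

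Both types of $2$-cycle therefore pair with $\omega_\tau$ to an integer, so by de Rham $[\omega_\tau]$ lifts to $H^2(M_\tau;\Z)$. The only step that really uses structure beyond bookkeeping in the Wang sequence is the disjointification of invariant $1$-cycles from the supports of the $\tau_{V_l}^{\sigma_l}$; this is where the hypothesis $n\geq 2$ is essential, and it is precisely what allows the ``suspension'' contribution to the periods to vanish on the nose rather than merely modulo an uncontrolled flux term.
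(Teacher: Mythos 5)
Your proof is correct, but it takes a genuinely different route from the paper. The paper argues via a Mayer--Vietoris decomposition of $M_{\tau}$ into $U=(M\setminus\bigcup_l V_l)\times S^1$ and $W=$ the mapping torus of $\tau$ restricted to the spheres: it kills $H_2(W;\R)$ (using $2$-connectivity of $S^n$ when $\dim M\geq 6$, and the fact that $H_2$ of the mapping torus of the antipodal map on $S^2$ is torsion when $\dim M=4$), checks that $H_1(U\cap W;\R)\to H_1(U;\R)\oplus H_1(W;\R)$ is injective via a Gysin/K\"unneth computation, concludes that $H_2(U;\R)\to H_2(M_{\tau};\R)$ is onto, and then observes that $[\omega_{\tau}]$ vanishes on the $S^1$-factor classes and pairs integrally with classes from $H_2(M\setminus\bigcup_l V_l)$. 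You instead run the Wang sequence of $M\hookrightarrow M_{\tau}\to S^1$ and verify integrality of periods directly: fibre classes pair integrally because $[\omega]$ is integral, and the $\partial$-image of any class is represented by a loop which, since $1+n<2n$, can be pushed off the spheres and then radially out of the Weinstein neighbourhoods, so it is pointwise fixed by $\tau$ and its suspension torus has vanishing $\omega_{\tau}$-area; the de Rham/universal-coefficients criterion then gives the integral lift. What your approach buys is uniformity and directness: you avoid the dimension case split (no need to discuss $H_2$ of the sphere mapping tori at all), you work with integral cycles from the start rather than passing through real-coefficient surjectivity statements, and as a byproduct your push-off argument exhibits explicit geometric generators (fibre classes and suspension tori of loops missing the twist supports). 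What the paper's approach buys is that it localizes the computation to the complement $M\setminus\bigcup_l V_l$, which matches the $M'$-based setup used elsewhere in the argument. One small point to make explicit if you write this up: after pushing $\gamma$ off $V_l$ and retracting it to the sphere bundle, you should note that the Dehn twist supports are contained strictly inside the Weinstein neighbourhoods (as the paper arranges), so the retracted loop genuinely misses them and $\tau\gamma=\gamma$ holds pointwise, not just up to homotopy.
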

\begin{proof}
In order to apply the Mayer-Vietoris l.e.s. for $M_{\tau}$,  let us find two suitable open subsets $U$ and $W$ that cover the mapping torus.  Up to some obvious isotopies in order to ensure they are actually open subsets and have a nice enough intersection, we set
\begin{align*}
U &:= \left( M \setminus \bigcup_{l=1}^m V_l \right) \times S^1 \\ W &:= \bigcup_{l=1}^m\left( V_l\times [0,1] \right) / (x,0)\sim (\tau_{V_l}(x),1)
\end{align*}
and let us denote with $\iota_U$, (resp. $\iota_W$) the obvious inclusion $\iota_U : U \to M_{\tau}$ (resp. $\iota_W : W \to M_{\tau}$).  Notice that $U\cap W$ is homotopy equivalent to $\bigcup_{l=1}^m S(V_l)\times S^1$.  Since the Lagrangian spheres $V_l$'s are assumed to be disjoint,  we know that 
\[\tau(z)=\tau_{V_l}(z) \ \Leftrightarrow z \in V_l.\] 
In particular,  $W$ is homotopy equivalent to the mapping torus for the map $\tau : \bigcup_lV_l \to \bigcup_lV_l$
\begin{center}
\begin{tikzcd}[column sep = 1.2cm]
 H_2(U;\R)\oplus H_2(W;\R)\arrow{r}{{\iota_U}_*-{\iota_W}_*}   & H_2(M_{\tau};\R)  \arrow{r}{\partial} & H_1(U \cap W;\R)  \arrow{r}{j_U\oplus j_W} & H_1(U;\R)\oplus H_1(W;\R)
\end{tikzcd}
\end{center}
where $j_U : U\cap W \to U$ is the canonical inclusion, similarly $j_W$.\\
\noindent We first claim that $H_2(W;\R)=0$.  $W$ is the disjoint union of the mapping tori of each Dehn twist.  If $\dim M \geq 6$ this is a consequence of the fact that each Lagrangian sphere is at least $2$-connected.  In the case $\dim M = 4$,  the second homology group of the mapping torus of the antipodal map on $S^2$ is torsion,  hence its real-coefficient homology vanishes.\\
\noindent Notice that the map induced by the inclusion $H_1(U\cap W;\R) \to H_1(U;\R)\oplus H_1(W;\R)$ is injective: a quick application of real-coefficients Gysin sequence together with K\"unneth Theorem shows that \[H_1(U \cap W;\R)\cong \R\langle [S^1]\rangle\] Establishing injectivity at this point is easy,  hence the map $H_2(U;\R)\to H_2(M_{\tau};\R)$ has to be surjective, and as a consequence we can write any $\alpha \in H_2(M_{\tau})$ as ${\iota_U}_*\alpha'$ for $\alpha' \in H_2(U)$. \\
\noindent K\"unneth theorem applied to $H_2(U)$ reveals that 
\[H_2(U) \cong H_2(M \setminus \bigcup V_l ; \R )\oplus \left(H_1(M \setminus \bigcup V_l; \R)\otimes H_1( S^1 ; \R)\right).\]
By construction of $\omega_{\tau}$ this class vanishes on classes arising from the base of the mapping torus,  hence the only non-trivial contributions come from 
\[ H_2\left(M \setminus \bigcup V_l ; \R\right),\] 
where we know it pairs integrally with $\Z$-coefficients cycles.  
\end{proof}

\begin{lemma}\label{lemma action functional coincides between neck parameters} For each neck-stretch parameter $\nu$,  the set of critical values of $\mathcal{A}_{h^{\nu}}$ coincides with the one of $\mathcal{A}_{h^{\nu_0}}$
\end{lemma}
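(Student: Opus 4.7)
The plan is to exploit the fact that the Hamiltonians $h^{\nu}$ of Prop.~\ref{prop existence of Morse functions} all coincide with $h'$ on $M'$ independently of $\nu$, together with the fact that every fixed point of the perturbed Dehn twist $\tau_p^{\nu}$ already lies in that region. Concretely, I would try to show the stronger statement that the set of critical points of $\mathcal{A}_{h^{\nu}}$ in the Novikov cover $\widetilde{\Omega_{\tau}}$ is itself independent of $\nu$, and that the action takes the same value on each of them.

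First I would argue that the underlying fixed-point set of $\tau_p^{\nu}$ does not depend on $\nu$. By Rem.~\ref{rem sign morse function}, all such fixed points lie in $M\setminus \bigcup_l T^*_{\delta}(S^n)_l \subset M'$. Since $\tau$ is the identity outside the Weinstein neighbourhoods, on $M'$ the equation $\tau_p^{\nu}(p)=p$ reduces to $\psi_{1}^{\nu}(p)=p$, i.e.\ to a $1$-periodic orbit of the autonomous Hamiltonian $h^{\nu}|_{M'}=h'$. Keeping $\|h'\|_{C^2}\leq \rho$ sufficiently small, the standard argument (e.g.\ \cite[Theorem 7.3]{HS95}) identifies such orbits with $\mathrm{Crit}(h')$, a set $\mathcal{P}$ that is manifestly independent of $\nu$.

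Next, for each $p\in \mathcal{P}$ the associated critical path is $\gamma_p(t)=\psi_t^{\nu}(p)\equiv p$, since $X_{h^{\nu}}(p)=X_{h'}(p)=0$. Hence the critical paths of $\mathcal{A}_{h^{\nu}}$ in $\Omega_{\tau}$ coincide for every $\nu$, and so do their preimages in $\widetilde{\Omega_{\tau}}$, the cover itself being $\nu$-independent since it is built only out of $\tau$ and $[\omega_{\tau}]$. A direct evaluation of the action on any such lift then gives
\[
\mathcal{A}_{h^{\nu}}([\gamma_p,u]) = \int_{[0,1]^2} u^*\omega + \int_0^1 h^{\nu}(\gamma_p(t))\,dt = \int_{[0,1]^2} u^*\omega + h'(p),
\]
using $\gamma_p\equiv p\in M'$ and $h^{\nu}|_{M'}=h'$; both terms on the right are manifestly $\nu$-independent, so the set of critical values of $\mathcal{A}_{h^{\nu}}$ coincides with that of $\mathcal{A}_{h^{\nu_0}}$.

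I do not foresee a serious obstacle here: the only mildly non-trivial point is ruling out spurious $1$-periodic orbits of $\psi_1^{h^{\nu}}$ in $M'$ that are not critical points of $h'$, and this is exactly what the $\|h^{\nu}\|_{C^2}\leq \rho$ bound in property 2 of Prop.~\ref{prop existence of Morse functions} is designed to handle. The rest of the argument is essentially unpacking the definitions of the action functional and of the Novikov cover.
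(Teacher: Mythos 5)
Your proposal is correct and follows essentially the same route as the paper: both arguments hinge on the fact that the relevant twisted periodic orbits are confined to $M'$, where $h^{\nu}\equiv h^{\nu_0}=h'$, so the action spectra agree. The paper's proof is slightly more economical in that it only uses that the orbits lie in level sets of $h^{\nu}$ and hence stay in $M'$, without needing your stronger (but standard, and consistent with the $\|h^{\nu}\|_{C^2}\leq\rho$ setup via \cite[Theorem 7.3]{HS95}) identification of the fixed points with $\mathrm{Crit}(h')$ and of the orbits with constants.
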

\begin{proof}
We first observe that any periodic orbit of the Hamiltonian vector field $X_{h^{\nu}}$ is entirely contained in $M'$. This is because the fixed points of $\tau_p^{\nu}$ are entirely contained in $M'$ and integral lines of $X_{h^{\nu}}$ are contained in the level sets of $h^{\nu}$.  Since $h^{\nu}\equiv h^{\nu_0}$ on $M'$ by construction,  we conclude.
\end{proof}
Let $\{\nu_i\}_i$ be a sequence of neck-stretch parameters converging to $+\infty$ for $i \to +\infty$ and let $\{r_k\}_k$ be any increasing sequence of real numbers such that it satisfies the following conditions:
\begin{itemize}
\item $\lim\limits_{k\to +\infty} r_k = +\infty$\\
\item $\{r_k\}_k$ does not contain critical values of $\mathcal{A}_{h^{\nu_0}}.$
\end{itemize}
The second condition is achieved by observing we are avoiding countably many critical values.  Lemma \ref{lemma action functional coincides between neck parameters} shows that in this way we are avoiding all the critical points of all the action functionals associated with our neck-stretching.
\begin{defi} Let $F^k{C_i}^{\bullet}$ be the subcomplex of ${C_i}^{\bullet}:={C_i}^{\bullet}\left(\tau_p^{\nu_i}; \Lambda_{\omega}\right)$ defined as follows \[  F^k{C_i}^{\bullet} := \left\lbrace \sum \xi_{[\gamma,u]}\cdot [\gamma,u] \in {C_i}^{\bullet} \mid \xi_{[\gamma,u]}=0 \text{ if } \mathcal{A}_{h^{\nu_i}}([\gamma,u])< r_k\right\rbrace.\]
\end{defi}
\noindent Notice that, for $k\leq j$, we have $F^j{C_i}^{\bullet}\subseteq F^k{C_i}^{\bullet}$. This motivates the following lemma.
\begin{lemma} The filtration defined above is complete and exhaustive.  In other words we have the following isomorphisms of $\Lambda_{\omega}$-vector spaces \begin{align*}
F^k{C_i}^{\bullet} &\cong \varprojlim_{j\to +\infty}F^k{C_i}^{\bullet}/F^j{C_i}^{\bullet} \\
{C_i}^{\bullet}&\cong \varinjlim_{k\to -\infty} F^k{C_i}^{\bullet}.
\end{align*}
\end{lemma}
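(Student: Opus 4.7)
My plan is to unpack the definitions and verify the two isomorphisms directly. The key structural observation is that, for $j \geq k$, the quotient $F^k C_i^\bullet / F^j C_i^\bullet$ is canonically identified with the space of formal $\Z_2$-linear combinations of critical points $[\gamma, u]$ whose action $\mathcal{A}_{h^{\nu_i}}([\gamma, u])$ lies in the half-open window $[r_k, r_j)$, and the projection $F^k C_i^\bullet / F^{j+1} C_i^\bullet \to F^k C_i^\bullet / F^j C_i^\bullet$ is simply truncation of the support at $r_j$ from above.

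For the completeness isomorphism I verify injectivity and surjectivity of the tautological map $\pi : F^k C_i^\bullet \to \varprojlim_j F^k C_i^\bullet / F^j C_i^\bullet$. Injectivity is immediate, since an element of $\ker \pi$ lies in $\bigcap_{j \geq k} F^j C_i^\bullet$, and as $r_j \to +\infty$ its support is forced to be empty. For surjectivity, given a coherent family $(\bar\xi_j)_j$ I choose for each $j$ the canonical representative $\xi_j \in F^k C_i^\bullet$ with support in $[r_k, r_j)$; the coherence relations force the coefficients $(\xi_{j'})_{[\gamma,u]}$ to stabilize as soon as $r_{j'} > \mathcal{A}_{h^{\nu_i}}([\gamma,u])$, so I can consistently set $\xi_{[\gamma,u]}$ equal to this eventual common value. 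I then verify that $\xi$ satisfies the Novikov finiteness condition of $C_i^\bullet$ by reducing it, on each bounded action window, to the analogous finite statement for some $\xi_j$.

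For the exhaustiveness isomorphism, the natural map $\iota : \varinjlim_{k \to -\infty} F^k C_i^\bullet \to C_i^\bullet$ is injective as a direct limit of nested subspaces. For surjectivity, I appeal to the Novikov finiteness condition on any given $\xi \in C_i^\bullet$: this forces the support of $\xi$ to admit a lower bound $c_0 \in \R$ in action, and any $k$ with $r_k \leq c_0$ then places $\xi$ inside $F^k C_i^\bullet$.

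The main technical obstacle I anticipate is the verification, in the surjectivity step of the completeness part, that the assembled $\xi$ genuinely lies in $F^k C_i^\bullet$ rather than merely in the potentially larger space of locally-finite formal sums on $[r_k, +\infty)$. This step will use the rationality of $[\omega]$ via Lemma \ref{lem symplectic form rational is rational in mapping torus}, which makes the set of actions of critical points a discrete, countable subset of $\R$; consequently each bounded window $[r_k, r_j)$ contains only finitely many generators, so the local finiteness of the $\xi_j$'s automatically upgrades to the Novikov finiteness of $\xi$ on all of $[r_k, +\infty)$.
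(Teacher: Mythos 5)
Your argument is correct, but it takes a different route from the paper. You verify both isomorphisms by hand at the level of elements: injectivity via $\bigcap_j F^j{C_i}^{\bullet}=0$, surjectivity of the completeness map by assembling stabilizing coefficients from a coherent family, and exhaustiveness from a lower bound on the action of the support. The paper instead disposes of the lemma in one line by reducing it to the corresponding statement for $\Lambda_{\omega}$ itself: since there are finitely many fixed points, ${C_i}^{\bullet}$ is a finite-rank free $\Lambda_{\omega}$-module and the action filtration is (up to shifts on each generator) the valuation filtration of $\Lambda_{\omega}$, which is complete and exhaustive by construction of the Novikov field as a completion of the group ring of $N$. Your approach buys transparency and makes explicit exactly which finiteness condition is being used; the paper's buys brevity and makes clear that nothing beyond the definition of $\Lambda_{\omega}$ is involved. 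Two remarks on your technical step. First, the rationality of $[\omega]$ is not actually needed here: each quotient $F^k{C_i}^{\bullet}/F^j{C_i}^{\bullet}$ consists of finitely supported sums in the window $[r_k,r_j)$ because of the Novikov finiteness condition alone (applied with threshold $r_j$), so the assembled limit automatically has finite support in every bounded window without invoking discreteness of the action spectrum; discreteness is available and harmless, but it is a stronger ingredient than required, and the paper's proof does not use it. Second, be aware that both your exhaustiveness step (``the support admits a lower bound'') and the lemma itself are only true under the convention that, for every $c$, only finitely many nonzero coefficients have action $\leq c$ (infinite tails going \emph{up} in action, matching the ``upward completion'' defining $\Lambda_{\omega}$ and the fact that the differential increases action); the inequality as literally printed in the paper's remark on $CF^k$ points the other way, and your proof implicitly, and correctly, uses the intended convention.
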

\begin{proof}
The proof is a consequence of the fact that the same statements hold for the Novikov field: $\Lambda_{\omega}$ is the completion of the group ring of a suitable quotient of $\pi_1(\Omega_{\tau})$ with respect to the additive valuation given by the action functional. 
\end{proof}
\begin{defi}
For $k\leq j$, we define the so called \textit{relative} Floer cohomology of the pair $\left(F^k{C_i}^{\bullet},F^j{C_i}^{\bullet}\right)$ as follows: \begin{equation*}\label{def rel cohomology}
HF^{\ast}_{(k,j)}\left(\tau_p^{\nu_i}\right) := H^{\ast}\left(F^k{C_i}^{\bullet}/F^j{C_i}^{\bullet} \right).
\end{equation*}
\end{defi} 
\begin{defi}\label{def modified floer groups} The \textit{modified} fixed point Floer cohomology groups of $\tau_p^{\nu_i}$ are defined as follows:
\[ \widehat{HF^{\ast}}\left(\tau_p^{\nu_i}; \Lambda_{\omega}\right) := \varinjlim_{k\to -\infty} \varprojlim_{j\to +\infty}  HF^{\ast}_{(k,j)}\left(\tau_p^{\nu_i}\right).\]
\end{defi}

\begin{rem} The modified groups $ \widehat{HF^{\ast}}\left(\tau_p^{\nu_i}; \Lambda_{\omega}\right)$ have a natural $\Lambda_{\omega}$-module structure and do not depend on the choice of $\{r_k\}$  (See \cite[Lemma 3.2-3.3]{Ono95})
\end{rem} 

Since we assume that $[\omega]\in H^2(M;\Q)$, the spectrum of the action functional is discrete.  We can then additionally assume that our sequence of increasing real numbers $\{r_k\}_k$ satisfies the following additional condition:
\begin{itemize}
\item there exists $\theta > 0$ such that in the $\theta$-neighborhood of each $r_k$ there are no critical values of any $A_{h^\nu}$
\end{itemize}
By choosing the sequence of neck-stretch parameters $\{\nu_i\}_i$ appropriately,  we can assume that \begin{equation*}
\| h^{\nu_{i+1}}-h^{\nu_{i}}\|_{C^1}< \frac{\theta}{2}.
\end{equation*}
Sard - Smale then ensure we can find a regular path $h_s^{\nu_i}$ connecting $h^{\nu_{i}}$ to $h^{\nu_{i+1}}$ such that 
\begin{equation*}
\int_{-\infty}^{+\infty} \max_{x \in M} \left\lvert \frac{\partial h_s^{\nu_i}}{\partial s} \right\rvert ds < \theta
\end{equation*}
which we can use to define our continuation maps 
\[\chi_{i,i+1} \colon HF^{\ast}\left(\tau_p^{\nu_{i}}\right)\to HF^{\ast}\left(\tau_p^{\nu_{i+1}}\right). \] 
\noindent In our setting,  these continuation maps preserve the filtration:
\begin{theorem}[{\cite[Theorem 3.4]{Ono95}}]\label{thm 3.4 Ono continuation maps preserves filtration} Assume that $[\omega]$ can be represented by an integral class.  The family of continuation maps $\{\chi_{i,i+1}\}_i$ induces a family of (invertible) continuation maps between relative Floer cohomology groups with respect to the filtration $\{ r_k\}_k$ above.
\[\chi_{i,i+1}^{(k,j)} \colon HF^{\ast}_{(k,j)}\left(\tau_p^{\nu_{i}}\right)\to HF^{\ast}_{(k,j)}\left(\tau_p^{\nu_{i+1}}\right)\]
for any pair $(k,j)$ such that $k <j$.  These maps are compatible with the homomorphisms in the colimit and inverse limit defining the modified Floer groups.
\end{theorem}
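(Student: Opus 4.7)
The core ingredient is the standard action–energy identity for continuation strips, which in the present setting reads as follows. For a finite-energy solution $u(s,t)$ of the continuation Floer equation driven by the homotopy $h_s^{\nu_i}$, asymptotic to the lifted fixed points $\widetilde{x}_-$ of $\tau_p^{\nu_i}$ at $s=-\infty$ and $\widetilde{x}_+$ of $\tau_p^{\nu_{i+1}}$ at $s=+\infty$, differentiating $s\mapsto \mathcal{A}_{h_s^{\nu_i}}(u(s,\cdot))$ and integrating in $s$ yields
\begin{equation*}
\mathcal{A}_{h^{\nu_i}}(\widetilde{x}_-) - \mathcal{A}_{h^{\nu_{i+1}}}(\widetilde{x}_+) \;=\; E(u)\;-\;\int_{-\infty}^{+\infty}\!\!\int_0^1 \frac{\partial h_s^{\nu_i}}{\partial s}\bigl(u(s,t)\bigr)\,dt\,ds.
\end{equation*}
Since $E(u)\geq 0$ and the path $h_s^{\nu_i}$ was picked so that $\int_{-\infty}^{+\infty}\max_M|\partial_s h_s^{\nu_i}|\,ds<\theta$, this produces the uniform action-shift bound $\lvert\mathcal{A}_{h^{\nu_i}}(\widetilde{x}_-)-\mathcal{A}_{h^{\nu_{i+1}}}(\widetilde{x}_+)\rvert\leq \theta$ (the precise sign is dictated by the cohomology convention, in which the Floer differential raises action).

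The filtration-preservation step is then immediate from the spectral gap imposed on $\{r_k\}$. Because no critical value of any $\mathcal{A}_{h^\nu}$ lies in the open $\theta$-neighborhood of $r_k$, every generator $[\gamma,u]$ of $F^k C_i^\bullet$ actually satisfies $\mathcal{A}_{h^{\nu_i}}([\gamma,u])\geq r_k+\theta$. The action-shift bound then forces any generator appearing in $\chi_{i,i+1}([\gamma,u])$ to have action at least $r_k$, hence to lie in $F^k C_{i+1}^\bullet$. The identical argument with $r_j$ in place of $r_k$ shows that $F^j C_i^\bullet$ is sent into $F^j C_{i+1}^\bullet$, so $\chi_{i,i+1}$ descends to a chain map on the quotient complexes $F^k C_i^\bullet/F^j C_i^\bullet \to F^k C_{i+1}^\bullet/F^j C_{i+1}^\bullet$, producing the desired $\chi_{i,i+1}^{(k,j)}$.

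Invertibility comes from the standard ``reverse continuation plus chain homotopy'' package, refined so as to preserve the filtration. The assumed bound $\|h^{\nu_{i+1}}-h^{\nu_i}\|_{C^1}<\theta/2$ allows Sard–Smale to produce a regular backward homotopy from $h^{\nu_{i+1}}$ to $h^{\nu_i}$ with total variation $<\theta$, yielding $\chi_{i+1,i}^{(k,j)}$ by the same reasoning. The composition $\chi_{i+1,i}\circ \chi_{i,i+1}$ is chain-homotopic to $\mathrm{id}_{C_i^\bullet}$ via a two-parameter family of Floer equations interpolating between the concatenated homotopy and the constant homotopy at $h^{\nu_i}$. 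Choosing this two-parameter family inside a $C^1$-ball of radius $\theta$ around $h^{\nu_i}$ (possible thanks to the $\theta/2$ margin), the same action–energy identity shows that the chain homotopy preserves both $F^k$ and $F^j$, so it descends to the filtered quotients and witnesses that $\chi_{i,i+1}^{(k,j)}$ is an isomorphism. Compatibility with the structure maps of the colimit and inverse limit is automatic: the inclusions $F^{j'}\hookrightarrow F^j$ and the quotient maps $F^k/F^{j'}\twoheadrightarrow F^k/F^j$ act generator-by-generator and commute tautologically with the chain-level continuations. The only delicate point is ensuring that the two-parameter homotopy realising the chain homotopy can be kept simultaneously regular and within the $\theta$-budget — this is handled by a transversality-genericity argument strictly analogous to the one-parameter case.
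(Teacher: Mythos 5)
Your argument is correct and is essentially the one the paper relies on: the paper does not reprove this statement but cites Ono's Theorem 3.4, and its preparatory choices (the $\theta$-gap around each $r_k$, the bound $\|h^{\nu_{i+1}}-h^{\nu_i}\|_{C^1}<\theta/2$, the regular path of total variation $<\theta$) are exactly the input for the action--energy estimate plus filtered reverse-continuation and chain-homotopy argument you reconstruct. Two cosmetic caveats that do not affect your proof as used: the action--energy identity only yields a one-sided bound (the action along a continuation strip can drop by at most $\theta$, while it may increase arbitrarily, which is harmless for preserving $F^k$ and $F^j$), and what must be kept under the $\theta$-budget for the chain homotopy is the total variation $\int_{-\infty}^{+\infty}\max_M\left\lvert \partial_s h_s\right\rvert ds$ of each homotopy in the two-parameter family rather than the radius of a $C^1$-ball.
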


\section{Proof of Theorem \ref{theorem 1 case 2}}
\noindent Let \[CM^{\bullet} :=\bigoplus_{\ast\in \N} CM^{\ast}\left(h';\Z_2\right)\otimes \Lambda_{\omega}\] be the Morse cochain complex with coefficients in the Novikov field $\Lambda_{\omega}$ associated to the Morse function $h' : M'\to \R$.  We will denote with $HM^{\ast}\left(h';\Lambda_{\omega} \right)$ the cohomology of $\left(CM^{\bullet},\partial_{\text{Morse}}\otimes 1_{\Lambda_{\omega}}\right)$.  By the universal coefficient theorem, we immediately have:
\[HM^{\ast}\left(h';\Lambda_{\omega} \right)\cong HM^{\ast}\left(h';\Z_2 \right)\otimes \Lambda_{\omega}.\]
Without loss of generality we can assume that the increasing sequence of real numbers $\{r_k\}_k$ defined previously also misses the critical values of any $h'$.  We can now define a filtration for our Morse complex as follows:
\begin{defi} 
Let $F^k{CM}^{\bullet}$ be the subcomplex of ${CM}^{\bullet}$ defined as
\[  F^k{CM}^{\bullet} := {CM}^{\bullet}\otimes \Lambda^k_{\omega},\] 
where \[ \Lambda^k_{\omega} := \left\lbrace \sum_{A \in N}\xi_A\cdot A \in \Lambda_{\omega} \mid \xi_A=0 \text{ if } \text{ev}_{\omega}(A) < r_k \right\rbrace.\]
\end{defi}
\noindent For $k\leq j$, we can define a relative version of Morse cohomology:
\begin{align*}
HM_{(k,j)}^{\ast}\big(h'\big) &:= H^{\ast}\left(F^k{CM}^{\bullet}/F^j{CM}^{\bullet} \right)\\
&\cong H^{\ast}\left({CM}^{\bullet}\otimes \Lambda^k_{\omega}/\Lambda^j_{\omega} \right)\\
&\cong H^{\ast}\left({CM}^{\bullet}\right)\otimes \Lambda^k_{\omega}/\Lambda^j_{\omega}.
\end{align*}
A simple algebraic argument gives the following result
\begin{lemma}\label{lem modified morse homology is morse homology} There is an isomorphism of $\Lambda_{\omega}$-modules \[HM^{\ast}\left(h'; \Lambda_\omega\right) \cong \varinjlim_{k \to -\infty} \varprojlim_{j\to +\infty} HM^{\ast}_{(k,j)}\left(h'\right). \]
\end{lemma}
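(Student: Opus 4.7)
The result is essentially algebraic, since by the universal coefficient calculation preceding the statement we already know the exact shape of each relative group:
\[ HM^{\ast}_{(k,j)}(h') \cong H^{\ast}(CM^{\bullet})\otimes_{\Lambda_\omega} \left(\Lambda_\omega^k/\Lambda_\omega^j\right). \]
My plan is therefore to push this identification through the double limit and recover $H^{\ast}(CM^{\bullet})\otimes \Lambda_\omega = HM^{\ast}(h';\Lambda_\omega)$, exploiting the fact that $M'$ is a compact manifold so that $H^{\ast}(CM^{\bullet})$ is a \emph{finite-dimensional} $\Z_2$-vector space.

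First I would take the inverse limit in $j$. Fix $k$. The tower $\{\Lambda_\omega^k/\Lambda_\omega^j\}_{j\geq k}$ is a Mittag-Leffler tower of $\Z_2$-vector spaces whose inverse limit is, by construction, the completion of $\Lambda_\omega^k$ with respect to the valuation $\overline{\text{ev}_\omega}$; but $\Lambda_\omega^k$ is already complete, hence
\[ \varprojlim_{j\to +\infty} \Lambda_\omega^k/\Lambda_\omega^j \cong \Lambda_\omega^k. \]
Because $H^{\ast}(CM^{\bullet})$ is finite-dimensional over $\Z_2$, tensoring with it commutes with the inverse limit (an inverse limit of short exact sequences of vector spaces remains exact, and tensoring a finite-dimensional vector space preserves this), so
\[ \varprojlim_{j\to +\infty} HM^{\ast}_{(k,j)}(h') \cong H^{\ast}(CM^{\bullet})\otimes \Lambda_\omega^k. \]

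Next I would pass to the colimit in $k$. Tensor products commute with arbitrary colimits, so
\[ \varinjlim_{k\to -\infty} H^{\ast}(CM^{\bullet})\otimes \Lambda_\omega^k \cong H^{\ast}(CM^{\bullet})\otimes \varinjlim_{k\to -\infty}\Lambda_\omega^k. \]
Because $\{r_k\}$ is strictly decreasing to $-\infty$ as $k\to -\infty$ and every element of $\Lambda_\omega$ has bounded-below support with respect to $\overline{\text{ev}_\omega}$, every element lies in some $\Lambda_\omega^k$, so $\varinjlim_{k\to -\infty}\Lambda_\omega^k = \Lambda_\omega$. Combining both steps gives
\[ \varinjlim_{k\to -\infty}\varprojlim_{j\to +\infty} HM^{\ast}_{(k,j)}(h') \cong H^{\ast}(CM^{\bullet})\otimes \Lambda_\omega \cong HM^{\ast}(h';\Lambda_\omega), \]
where the last isomorphism is the universal coefficient identification spelled out just before the statement.

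The main (only) subtle point I expect is the commutation of tensor product with the inverse limit in the first step. This is why I would emphasise that $H^{\ast}(CM^{\bullet})$ is finite-dimensional over $\Z_2$: without this the $\varprojlim^1$ term could obstruct the argument and one would have to work with $\Lambda_\omega$-modules more carefully. Everything else (commutation of $\otimes$ with colimits, completeness of $\Lambda_\omega^k$, exhaustiveness of the filtration) is formal and the naturality of the identifications in $k$ and $j$ guarantees that the double limit is compatible with the structure maps as required.
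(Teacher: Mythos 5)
Your argument is correct and is precisely the ``simple algebraic argument'' the paper leaves implicit: it starts from the paper's own displayed identification $HM^{\ast}_{(k,j)}(h')\cong HM^{\ast}(h';\Z_2)\otimes_{\Z_2}\Lambda^k_{\omega}/\Lambda^j_{\omega}$ and then only uses completeness $\Lambda^k_{\omega}\cong\varprojlim_j\Lambda^k_{\omega}/\Lambda^j_{\omega}$, finite-dimensionality of the Morse cohomology (so the tensor, being a finite direct sum, commutes with $\varprojlim_j$), and exhaustiveness $\varinjlim_k\Lambda^k_{\omega}=\Lambda_{\omega}$. The only slip is notational: the relevant tensor is over $\Z_2$ with the finite-dimensional $\Z_2$-vector space $HM^{\ast}(h';\Z_2)$, not over $\Lambda_{\omega}$ (indeed $\Lambda^k_{\omega}/\Lambda^j_{\omega}$ is not a $\Lambda_{\omega}$-module); with that reading every step, including naturality in $k$ and $j$, goes through as you say.
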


\begin{prop}\label{prop existence filtration with gap less than lambda_i} Let $\{r_k\}_k$ be the filtration defined in the previous section. Let $\{\lambda_i\}_i$ be the increasing sequence of numbers from Theorem \ref{thm lower bound on energy for strips}. For each $i$ and $k$, let $a_{k,i}\in \N$ be the biggest index such that  
 \[ | r_k - r_{a_{k,i}}| < \lambda_i,\]
then:
\begin{enumerate}
\item $\lim\limits_{i \to \infty} r_{a_{k,i}} = +\infty$
\item For each index $i$,  there is a natural isomorphism of relatively graded $\Lambda_{\omega}$-vector spaces  \[ HF^{\ast}_{(k,a_{k,i})}\left(\tau_p^{\nu_i}\right) \cong HM^{\ast}_{(k,a_{k,i})}\big(h'\big).\]
\end{enumerate}
\end{prop}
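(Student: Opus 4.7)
The plan is to prove (1) as an immediate consequence of the definitions, then reduce (2) to a standard Floer-equals-Morse identification via the energy confinement provided by Theorem \ref{thm lower bound on energy for strips}.

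For (1), fix $k$ and observe that by maximality of $a_{k,i}$ we have $r_{a_{k,i}+1}\ge r_k+\lambda_i$. Since $\lambda_i\to+\infty$, this forces $a_{k,i}\to+\infty$, and the monotone divergence of $\{r_m\}_m$ then gives $r_{a_{k,i}}\to+\infty$.

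For (2), I would work in three steps. First, match the generators on both sides. By Remark \ref{rem sign morse function} together with Lemma \ref{lem Hamiltonian flow of morse function}, the fixed points of $\tau_p^{\nu_i}$ are precisely the critical points of $h^{\nu_i}|_{M'}=h'$: indeed, $\tau|_{M'}=\mathrm{Id}$ and for $\rho$ sufficiently small the periodic orbits of the autonomous $h^{\nu_i}$ on the compact $M$ are all constant at critical points. A generator of $F^k{C_i}^{\bullet}/F^{a_{k,i}}{C_i}^{\bullet}$ is then a pair $(x,[u])$ with $x\in\text{Crit}(h')$ and Floer action $\mathcal{A}_{h^{\nu_i}}([x,u])=\mathrm{ev}_\omega([u])+h'(x)$ in $[r_k,r_{a_{k,i}})$. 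Since $|h'(x)|\le\rho<\theta$, the $\theta$-gap condition on the sequence $\{r_k\}_k$ ensures that the Floer filtration window and the Novikov filtration window pick out the same set of generators.

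Second, show that any Floer trajectory $u^i$ contributing to the differential on this quotient is confined to $M'$. Its energy equals the action difference at the asymptotes, which is bounded by $r_{a_{k,i}}-r_k<\lambda_i$; Theorem \ref{thm lower bound on energy for strips} therefore excludes any intersection with the hypersurfaces $\{0\}\times S(V_l)$, and since both asymptotes already lie in $M'$ the image of $u^i$ is entirely contained in $M'$. Third, on $M'$ we have $\tau_p^{\nu_i}=\psi_1^{h'}$, so via the change of variables of Eq.~\ref{eq floer perturbed symplectomorphism} the Floer equation becomes the classical Cauchy-Riemann-Floer equation for the autonomous Hamiltonian $h'$. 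Invoking the standard small-Hamiltonian-equals-Morse identification of \cite[Section 7]{HS95} matches these Floer trajectories bijectively with Morse gradient flow lines of $h'$, compatibly with the Novikov/action filtration and with the relative $\Z_2$-grading up to a uniform shift. This yields the claimed chain-level isomorphism and hence the desired isomorphism on cohomology.

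The main technical obstacle will be the final step: the Floer-equals-Morse identification is classically formulated on a closed manifold, whereas $M'$ is a compact manifold with boundary. The obstacle is however largely cosmetic. Since the trajectories produced in step two sit at a definite positive distance from $\partial M'$ (they cannot even reach the hypersurfaces $\{0\}\times S(V_l)$ that lie in $\mathrm{int}(M')$), the usual $C^0$-estimates, transversality, and compactness arguments apply verbatim in a neighbourhood of each such trajectory, and sphere bubbling is ruled out by the same energy threshold used in the proof of Theorem \ref{thm lower bound on energy for strips}.
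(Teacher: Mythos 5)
Your part (1) and the energy-exclusion step of part (2) coincide with the paper's argument (the paper phrases it as: a trajectory through a sphere has energy at least $\lambda_i$, hence its output lies in $F^{a_{k,i}}{C_i}^{\bullet}$ and dies in the quotient; yours is the contrapositive), but your third step has a genuine gap. The quotient $F^{k}{C_i}^{\bullet}/F^{a_{k,i}}{C_i}^{\bullet}$ sees Floer trajectories of energy up to $r_{a_{k,i}}-r_k$, a quantity that tends to $+\infty$ with $i$. Excluding intersections with $\{0\}\times S(V_l)$ does \emph{not} exclude high-energy trajectories supported elsewhere in $M$: the small-Hamiltonian-equals-Morse identification of \cite[Section 7]{HS95} identifies Floer trajectories with gradient flow lines only below a fixed quantum energy threshold, whereas trajectories carrying a nontrivial class of positive $\mathrm{ev}_{\omega}$-weight (with Chern number compensating the index) may contribute inside the window once $\lambda_i$ is large, and such terms are absent from the Morse--Novikov differential $\partial_{\mathrm{Morse}}\otimes 1_{\Lambda_\omega}$. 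So there is no chain-level bijection of differentials, and your claimed identification fails as stated. The paper sidesteps exactly this by not comparing differentials directly: after the energy exclusion it invokes the \emph{low energy} PSS morphism --- a filtered chain map which is the identity plus strictly filtration-increasing corrections --- and the argument of \cite[Theorem 4.7]{Ono95} to conclude that it induces an isomorphism on each relative group $HF^{\ast}_{(k,a_{k,i})}\cong HM^{\ast}_{(k,a_{k,i})}$.

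Two further points. First, your intermediate claim that the image of $u^i$ is entirely contained in $M'$ does not follow from avoiding $\{0\}\times S(V_l)$: a trajectory can enter the outer half $(0,\varepsilon)\times S(V_l)$ of the neck and return without crossing the middle hypersurface; luckily confinement to $M'$ is not actually needed, only avoidance of the spheres. Second, the proposition asserts a \emph{natural} isomorphism, and naturality with respect to the continuation maps $\chi_{i,i+1}$ is precisely what is used later in the proof of Theorem \ref{theorem 1 case 2} to identify the inverse limits; your proposal never addresses it, while the paper obtains it from the fact that the PSS map intertwines Floer and Morse continuation maps (\cite[Theorem 2]{Dju14}). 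Your bookkeeping of the mismatch between the action filtration (which contains the Hamiltonian term) and the pure Novikov filtration on the Morse side, via the $\theta$-gap and $\rho<\theta$, is a sensible detail the paper leaves implicit, but it does not repair the main gap above.
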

\begin{proof}
The first condition is a consequence of Theorem \ref{thm lower bound on energy for strips} which shows that $\lambda_i \to +\infty$.  Let $[x_-,v]$ be a representative of a non-zero element in $F^{k}{C_i}^{\bullet}/F^{a_{k,i}}{C_i}^{\bullet}$.  Let us assume there is a $J^{\nu_i}$-holomorphic curve $u^i$ realizing $\partial \left([x_-,v]\right)=[x_+,v \# u^i]$ and passing through any of the Lagrangian sphere $V_l$.  By Theorem  \ref{thm lower bound on energy for strips}
 \[ E(u^i) = \mathcal{A}_{h^{\nu_i}}([x_+, v\  \# \ u^i])-\mathcal{A}_{h^{\nu_i}}([x_-, v])\geq \lambda_i,\] 
 we see that $[x_+, v \ \# \ u]\in F^{a_{k,i}}{C_i}^{\bullet}$ and hence the differential defined by $u^i$ is the zero map in $F^{k}{C_i}^{\bullet}/F^{a_{k,i}}{C_i}^{\bullet}$. Put differently, the pseudo-holomorphic curves realising non-trivial differentials in $F^{k}{C_i}^{\bullet}/F^{a_{k,i}}{C_i}^{\bullet}$ cannot pass through any Lagrangian sphere $V_l$.  By the same reasoning in \cite[Theorem 4.7]{Ono95},  the \textit{low energy} PSS isomorphism (\cite[Theorem 6.1]{HS95}) immediately gives us an isomorphism of relatively graded vector spaces  
 \[ HF^{\ast}_{(k,a_{k,i})}\left(\tau_p^{\nu_i}\right) \cong HM_{(k,a_{k,i})}^{\ast}\left(h'\right).\] 
 Naturality with respect to continuation maps is a consequence of the fact that the general PSS isomorphism intertwines continuation map in Floer theory and Morse theory (in the sense of \cite{Sch93}).  It is explicitly proved in \cite[Theorem 2]{Dju14}. 
\end{proof}

\begin{rem} In \cite{Dju14} the naturality of the PSS isomorphism is proved by analysing the boundary of the relevant moduli spaces. Therefore the same reasoning applies in our setting.  Moreover,  as observed in \cite[Chapter 12.1]{McDS12},  when working with small enough autonomous Hamiltonians then the PSS maps coincide with the classical isomorphism due to Floer and explained in \cite{HS95}. For this reason, the term \textit{low energy} PSS isomorphism seemed quite evocative to us.
\end{rem}

\begin{proof}[Proof of Theorem \ref{theorem 1 case 2}]
For an index $k$,
\begin{align}
\varprojlim_{j\to +\infty}HF^{\ast}_{(k,j)}\left(\tau_p^{\nu_0}\right) &\cong \varprojlim_{j\to +\infty}\varprojlim_{i\to +\infty}  HF^{\ast}_{(k,j)}\left(\tau_p^{\nu_i}\right) \label{line 1 final proof} \\
&\cong \varprojlim_{i\to +\infty} HF^{\ast}_{(k,a_{k,i})}\left(\tau_p^{\nu_i}\right) \label{line 2 final proof}\\
&\cong \varprojlim_{i\to +\infty} HM^{\ast}_{(k,a_{k,i})}\left(h'\right) \label{line 3 final proof} \\
&\cong \varprojlim_{j\to +\infty}\varprojlim_{i\to +\infty} HM^{\ast}_{(k,j)}\left(h'\right) \\
&\cong \varprojlim_{j\to +\infty} HM^{\ast}_{(k,j)}\left(h'\right) \label{line 6 final proof}
\end{align}
Line \ref{line 1 final proof} is justified by Theorem \ref{thm 3.4 Ono continuation maps preserves filtration} which shows that every continuation map is an isomorphism.  Line \ref{line 2 final proof} can be proved as follows: a limit of a bigraded inverse system is canonically isomorphic to the limit along the diagonal.  More abstractly,  for any small category $\mathcal{C}$, the diagonal functor $\Delta : \mathcal{C} \to \mathcal{C}\times \mathcal{C}$ is initial, hence pulling back along it preserves limits. Now it is immediate to see we can arrange the diagonal to coincide with the indexes $a_{k,i}$'s.  Line \ref{line 3 final proof} is a direct application of Prop. \ref{prop existence filtration with gap less than lambda_i}. Since the Morse function $h'$ is independent on $i$, the Morse continuation maps are simply the identity maps, in particular the inverse limit is taken over isomorphisms, proving line \ref{line 6 final proof}.  
\noindent We have therefore showed that  
\begin{equation}\label{eq isomorphism floer and morse novikov ring}
\varprojlim_{j\to +\infty}HF^{\ast}_{(k,j)}\left(\tau_p^{\nu_0}\right) \cong \varprojlim_{j\to +\infty} HM^{\ast}_{(k,j)}\left(h'\right).
\end{equation}
We now take the colimit for $k \to -\infty$ in Eq. \ref{eq isomorphism floer and morse novikov ring} and obtain the following isomorphism:
\begin{equation*}
\widehat{HF^{\ast}}\left(\tau_p^{\nu_0}; \Lambda_{\omega}\right) \cong \varinjlim_{k \to -\infty} \varprojlim_{j\to +\infty} HM^{\ast}_{(k,j)}\left(h'\right) \cong HM^{\ast}\left(h'; \Lambda_\omega\right)
\end{equation*}
By Prop. \ref{prop appendix isomorphism modified and standard floer cohomology} the leftmost term is canonically isomorphic to the \textit{standard} Floer cohomology group $HF^{\ast}\left(\tau_p^{\nu_0}; \Lambda_{\omega}\right)$. The rightmost term, being the usual Morse cohomology for the Morse function $h'$, is canonically isomorphic to $H^{\ast}\left(M,V;\Lambda_{\omega}\right)$.\\ The case with more than one Dehn twists follows analogously, with the only difference that the Morse function $h'$ under consideration will have its gradient either \textit{inward} or \textit{outward} along the connected components of the boundary of $M'$ depending on the sign of $\sigma_l$. The resulting Morse cohomology groups, thanks to our conventions, will then be isomorphic to $H^{\ast}\left(M \setminus C_-, C_+; \Lambda_{\omega}\right)$.
\end{proof}
\begin{rem} The assumption on $[\omega] \in H_2(M;\Q)$ for a symplectic manifold of dimension at least $6$ can be dropped as follows.  Let $\eta$ be a closed $2$-form realizing a perturbation such that \[ [\omega + \eta] \in H^2_{dR}(M; \Q). \]
The form $\eta$ can be taken small enough in order to ensure $\omega + \eta$ is still non-degenerate.  We want to show now that we can tweak $\eta$ inside its cohomology class to make it vanish on each Lagrangian sphere $V_l$.  Since $\dim M \geq 6$,  the Lagrangian spheres are $2$-connected, hence for each index $l$,  we can find $1$-forms $\alpha_l \in \Omega^1_{dR}(V_l; \R)$ such that \[ \left.\eta\right|_{V_l} = d\alpha_l.\] 
Using smooth bump functions $\phi_l$ supported around each Lagrangian sphere,  we can define the $2$-form
\[ \eta' := \eta -\sum_{l} d(\phi_l\alpha_l).\]
Clearly we have $[\eta]=[\eta']$ and by construction $\eta'$ vanishes on each Lagrangian sphere. This implies that each $V_l$ remains Lagrangian under this perturbation.  Since Floer cohomology is invariant under this procedure we can run the proof of the theorem for $(M,\omega + \eta')$.
\end{rem}

\section{Proof of Theorems \ref{theorem vanishing c} and \ref{theorem vanishing c for surfaces}}
This section is entirely devoted to prove Theorem \ref{theorem vanishing c},  and its adaption to the surface case.
For simplicity we will prove the case $\sigma=1$.  The general case is treated in the same way.\\
\noindent We start by making precise the assumptions on the Lagrangian spheres.  Let $w: (D^2,\partial D^2) \to (M, V)$ be a smooth map.  We can find a symplectic trivialization,  unique up to homotopy,  of 
\[ \psi : w^*TM \cong D^2\times \C^n.\] 
Using $\psi$ we can define a map from the boundary of the disk into the Lagrangian Grassmannian where there is a preferred class, called the \textit{Maslov class} 
\[ \mu \in H^1(\Lambda(\C^n);\Z).\] 
The Maslov index homomorphism is then given by the following evaluation: 
\[ I_{\mu,  V_l}(w) := \mu(\partial D^2).\]
In our setting, i.e.  with the (small) autonomous Hamiltonian $h^{\nu}$ and symplectomorphism $\phi=\tau_V$,  a pseudo-holomorphic half-strip is a function \[u : \R^+ \times \R \to M\] which is a solution of 
\begin{equation}\label{eq floer symplectomorphism half strips} 
\begin{cases} &\partial_r u + J_t^{\nu}\partial_t u - \nabla_{\nu} h^{\nu}=0 \\ 
&\tau_V\circ u(r,t+1)= u(r,t)\\
 &u(0, t) \in V \ \forall t \\
  &\lim\limits_{r\to + \infty} u(r,t)=x_{+}(t) \in \text{Crit}(\mathcal{A}_{h^{\nu},\gamma_0}).\end{cases}
 \end{equation}
As we did for the full strip case,  we can always consider solutions of the equivalent system:
\begin{equation}\label{eq floer perturbed symplectomorphism half strips}
 \begin{cases} &\partial_r v + {J'}^{\nu}_t\partial_t v =0 \\ &\tau_p^{\nu}\circ v(r,t+1)= v(r,t)\\ &v(0, t) \in V \ \forall t\\ &\lim\limits_{r\to + \infty}  v(r,t)=x_{+}(0) \in \text{Fix}(\tau_p^{\nu})\end{cases} 
 \end{equation}
via the identification \[ v(s,t):= (\psi_t^{{\nu}})^{-1}u(s,t)\]
Since the path $x_{+}(t)$ is completely determined by the dynamics of the vector field $X_{h^{\nu}}$ and the starting point $x_+(0)$, we will suppress the dependency on $t$  from the notation of $x_+(t)$.\\
Let us denote with \[\mathcal{M}^{\nu}_k(V,x_+)\] the space of solution of Eq. \ref{eq floer symplectomorphism half strips}, or equivalently Eq. \ref{eq floer perturbed symplectomorphism half strips}, such that $\mu_{CZ}\left(x_+\right)=k$.
\begin{lemma}\label{lem Lagrangian sphere is monotone} In the same setting as Theorems \ref{theorem vanishing c} or Theorem \ref{theorem vanishing c for surfaces},  the Lagrangian sphere $V$ is monotone,  i.e.  the Maslov index of a pseudo-holomorphic disk $w$ with boundary on $V$ is proportional to its area: 
\[ I_{\mu,  V}(w) = \lambda \int_{D^2}w^*\omega \ \ \text{for some } \lambda > 0.\]
\end{lemma}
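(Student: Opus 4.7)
The plan is to derive monotonicity of $V$ directly from monotonicity of the ambient $(M,\omega)$ by the classical capping-off trick, handling separately the case $\dim M \geq 4$ (Theorem \ref{theorem vanishing c}) and the surface case (Theorem \ref{theorem vanishing c for surfaces}).

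In the case $\dim M = 2n \geq 4$, I would use that $V \cong S^n$ with $n \geq 2$ is simply connected. Given $w:(D^2, \partial D^2) \to (M, V)$, its boundary loop is null-homotopic in $V$, so I can cap it off with a smooth disk $\bar w : D^2 \to V$ sharing the same boundary to obtain a $2$-sphere $W := w \# \bar w : S^2 \to M$. Two observations then reduce everything to monotonicity of $M$. First, because $V$ is Lagrangian, $\omega$ vanishes on $\bar w$, hence
\[
\int_{S^2} W^*\omega \;=\; \int_{D^2} w^*\omega.
\]
Second, there is the standard identity $I_{\mu,V}(w) = 2\,\langle c_1(TM),[W]\rangle$, which holds precisely because $V$ is simply connected: a symplectic trivialization $\psi$ of $w^*TM$ restricts on $\partial D^2$ to a loop in $\Lambda(\C^n)$ whose class is $I_{\mu,V}(w)$; on the capping disk $\bar w$ the Lagrangian planes $T_{\bar w(p)}V$ define an extension of this loop (unique up to homotopy since $\pi_1(V)=0$), so comparing the two trivializations of $W^*TM$ over the two hemispheres measures the Chern number of $W$, with the factor $2$ coming from the standard relation between the Maslov generator of $\pi_1(\Lambda(\C^n))$ and $\pi_1(U(n))$. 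Monotonicity of $(M,\omega)$ gives $\langle c_1(TM),[W]\rangle = \kappa\int_{S^2} W^*\omega$ for some $\kappa > 0$, so $I_{\mu,V}(w) = 2\kappa \int_{D^2}w^*\omega$ and $V$ is monotone with $\lambda = 2\kappa$.

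In the surface case $M = \Sigma$, $V$ is a non-contractible simple closed curve in a genus $\geq 2$ surface, so $\pi_1(V) \hookrightarrow \pi_1(\Sigma)$ is injective and $\pi_2(\Sigma) = 0$. The boundary $\partial w$ is null-homotopic in $\Sigma$ because it bounds the disk $w$, hence null-homotopic in $V$ by injectivity, so I can again cap off $w$ by a disk $\bar w$ in $V$ to obtain a sphere $W : S^2 \to \Sigma$, which is automatically null-homotopic. Both $\int_{D^2}w^*\omega = \int_{S^2}W^*\omega$ and $I_{\mu,V}(w) = 2\langle c_1(T\Sigma),[W]\rangle$ then vanish, and the monotonicity relation holds trivially for any $\lambda > 0$.

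The only non-routine point is the identity $I_{\mu,V}(w) = 2\langle c_1(TM),[W]\rangle$, which is classical (Viterbo, Oh) but deserves a brief recall of how the two trivializations of $W^*TM$ are compared; I would package that as an explicit reference rather than re-deriving it. Everything else is bookkeeping on the capping operation and invocation of the monotonicity hypothesis on $M$.
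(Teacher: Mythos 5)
Your proposal is correct and follows essentially the same approach as the paper. In the higher-dimensional case, your explicit capping-off construction is exactly what the paper expresses via the surjection $\pi_2(M) \twoheadrightarrow \pi_2(M,V)$ from the long exact sequence of the pair, and the two key identities you state (area preservation under capping thanks to $V$ being Lagrangian, and $I_{\mu,V}(w) = 2\langle c_1(TM),[W]\rangle$) are the same ones invoked there. For the surface case, the paper phrases it as $\pi_2(\Sigma,V)=0$ (so there are no non-constant disks at all), while you cap off and observe both sides vanish since $\pi_2(\Sigma)=0$; these lead to the same trivial conclusion.
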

\begin{proof}
Let us start with the case $\Sigma$ is a Riemann surface of genus at least $2$.  Remember that $V$ is assumed to be essential, i.e. the map induced by inclusion \[i_* : \pi_1(V,\{pt.\}) \to \pi_1(\Sigma,\{pt.\}) \] is non-trivial for each $\{pt.\} \in V$. This immediately implies the absence of non-constant disks whose boundary lie in $V$ since $\pi_2(\Sigma,V)=0$. This proves the lemma in the surface case.\\ If $\dim M \geq 4$, any Lagrangian sphere is simply connected.  Therefore an application of the long exact sequence in homotopy for the pair $(M,V)$ gives us a surjection 
\[j_* : \pi_2(M) \twoheadrightarrow \pi_2(M,V)\] 
induced by the canonical map $j : (M,\emptyset) \to (M,V)$.  This means that each disk, up to homotopy, admits a spherical representative in $M$. Let $a \in \pi_2(M)$ be a representative for any such disk. Since
\[ \int_{S^2} a^*\omega = \int_{D^2}(j_*a)^*\omega \] and 
\[ I_{\mu,  V}(a)=2c_1(j_*a),\] 
(see \cite[Rem 2.3.ii]{Oh93}) the monotonicity on $M$ implies the monotonicity of $V$
\end{proof}
By \cite[Lemma 3]{Sei96},  in the surface with genus at least $2$ case,  the Novikov field $\Lambda_{\omega}$ is isomorphic to $\Z_2$. \\ 
The next lemma explains how any half-cylinder defines an element in the Novikov cover of $\Omega_{\tau_V}$.  Since we have to choose a reference path  $\gamma_0 \in \Omega_{\tau_V}$,  let us assume that $\gamma_0$ is entirely contained in $V$.  
\begin{lemma}\label{lem def c in the case dim M is 4} A solution $u : \R^+ \times \R \to M$ of Eq.  \ref{eq floer symplectomorphism half strips} defines a preferred lift $\widetilde{x_+}$ of $x_+$ in the Novikov cover $\widetilde{\Omega_{\tau_V}}$
\end{lemma}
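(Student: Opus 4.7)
The plan is to reparametrize the half-strip into a map on the right half of the unit square, then cap off the complementary left half by a map into $V$; the Lagrangian condition on $V$ will then force independence of the choice of cap.

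First, I would fix a smooth orientation-preserving diffeomorphism $\phi : \R^+ \to [1/2, 1)$ (standard at $0$ and with appropriate decay at $\infty$) and set $\bar{u}(s, t) := u(\phi^{-1}(s), t)$ for $(s, t) \in [1/2, 1) \times \R$. The exponential convergence of the half-strip $u$ to $x_+$ as $r \to +\infty$ allows $\bar{u}$ to extend continuously (indeed smoothly, after choosing $\phi$ carefully) to $s = 1$ by $\bar{u}(1, t) := x_+(t)$. Restricting to $t \in [0, 1]$ yields a map $\bar u : [1/2, 1] \times [0, 1] \to M$ satisfying
\[\bar u(1/2, t) = u(0, t) \in V, \quad \bar u(1, t) = x_+(t), \quad \tau_V \bar u(s, 1) = \bar u(s, 0).\]

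Next, I would produce a map $v : [0, 1/2] \times [0, 1] \to V$ with $v(0, t) = \gamma_0(t)$, $v(1/2, t) = u(0, t)$, and $\tau_V v(s, 1) = v(s, 0)$, so that the concatenation $\hat{u} := v \# \bar{u} : [0,1]^2 \to M$ (after reparametrizing the $s$-interval) satisfies the boundary conditions of the Novikov cover definition with $\gamma = x_+$. Existence of such a $v$ amounts to the statement that $\gamma_0$ and $u(0, \cdot)$ lie in the same path-component of the $\tau_V$-twisted path space of $V$; in the higher-dimensional setting this is automatic because $V \cong S^n$ with $n \geq 2$ is simply connected and $\tau_V|_V$ is the antipodal map, while in the surface case one exploits the freedom in choosing the reference path $\gamma_0$ to ensure compatibility.

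The decisive point is independence of $v$: if $v'$ is any other cap into $V$ with the same boundary data, then $v \cup v'$ defines a singular 2-cycle mapping into $V$, and since $V$ is Lagrangian we have
\[\int v^*\omega \;=\; \int v'^*\omega \;=\; 0.\]
Consequently the $\omega$-areas of $v \# \bar u$ and $v' \# \bar u$ coincide, and the two concatenations represent the same class in $\widetilde{\Omega_{\tau_V}}$. We therefore obtain a well-defined element $\widetilde{x_+} := [x_+,\, v \# \bar u]$ depending only on $u$.

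The main obstacle will be the construction of the cap $v$ in the surface case (where $\dim V = 1$ and the twisted path space of $V$ can have multiple components); this is handled by appropriate choice of $\gamma_0$ within the connected component of $\Omega_{\tau_V}$ under consideration, with the independence argument then ensuring the lift is canonical.
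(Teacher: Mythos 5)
Your argument for the case $\dim M \geq 4$ is essentially the paper's: cap off the boundary path $u(0,\cdot)$ with a homotopy through $V$ back to $\gamma_0$ (using simple-connectivity of $V$ when $n\geq 2$), and obtain independence of the cap because the image lies in a Lagrangian, so the two caps glue to a cycle of zero $\omega$-area. The detail you add about reparametrizing $\R^+$ to a half-square and using exponential decay at $+\infty$ is correct but is routine and suppressed in the paper.

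The point of divergence is the surface case, and here your proposed fix does not quite work. You correctly identify that when $V\cong S^1$ the $\tau_V$-twisted path space of $V$ has multiple components, so a cap $v$ into $V$ may fail to exist, and you suggest resolving this by ``appropriate choice of $\gamma_0$.'' But $\gamma_0$ is fixed once and for all when setting up $\widetilde{\Omega_{\tau_V}}$, whereas different solutions $u$ of Eq.~\ref{eq floer symplectomorphism half strips} may have boundary paths $u(0,\cdot)$ lying in different components of the twisted path space of $V$; no single choice of $\gamma_0$ can chase all of them. The paper avoids this entirely: by \cite[Lemma 3]{Sei96} the Novikov field for a genus $\geq 2$ surface is $\Z_2$, so $\ker\text{ev}_\omega = \pi_1(\Omega_{\tau_V},\gamma_0)$ and the Novikov cover is trivial. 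In that case the lift of $x_+$ is just $x_+$ itself, no cap into $V$ is needed, and the lemma holds vacuously. This is both shorter and closes the hole in your surface-case argument.
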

\begin{proof}
In the surface of genus at least $2$ case,  this is true since the Novikov cover is the trivial cover of $\Omega_{\tau_V}$.  Let us assume $\dim M \geq 4$.  Since $V$ is simply-connected,  it is easy to see that we can find an homotopy \[W: [0,1]\times [0,1] \to V\] from $\gamma_0$ to $u(0,t)$ such that $W(s,1)=\tau_VW(s,0)$.  Using it we can define the lift \[ [(x_+,u\# W)] \in \widetilde{\Omega_{\tau_V}} \] of $x_+$.  Any two possible choice of the homotopies $W_1,W_2$ will define the same lift in the Novikov cover since they will have vanishing symplectic area due to the fact that their images lie in the Lagrangian sphere $V$.
\end{proof}
\noindent For each lift $\widetilde{x_+}$ of $x_+$ in the Novikov cover $\widetilde{\Omega_{\tau_V}}$,  let us set \[ \mathcal{M}^{\nu}_k(V,\widetilde{x_+})\subseteq \mathcal{M}^{\nu}_k(V,x_+)\] as the subset of half-strips whose preferred lift of $x_+$ coincides with $\widetilde{x_+}$.
\begin{lemma}\label{lem existence regular almost complex structure for definining c} For each neck stretch parameter $\nu_i$,  we can find generic suitable almost complex structures $J^{\nu_i}$ and regular Hamiltonians $h^{\nu_i}$ such that the following are simultaneously satisfied: 
\begin{enumerate}
\item We can define the groups  $HF^{\ast}(\tau_p^{\nu_i}; \Lambda_{\omega})$
\item $\mathcal{M}^{\nu}_k(V,\widetilde{x_+})$ is a smooth manifold of dimension $k$.
\end{enumerate}
\end{lemma}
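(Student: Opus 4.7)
The plan is to achieve both conditions by a standard Sard--Smale / generic transversality argument, carried out in a class of almost complex structures and Hamiltonians that is constrained to respect the neck-stretching setup of the previous section. Concretely, for a fixed neck-stretch parameter $\nu_i$ I would fix the almost complex structure on a tubular neighborhood $(-\varepsilon,\varepsilon)\times S(V)$ of the neck to be the $\eta$-adjusted structure used in the neck-stretching construction, and allow perturbations only in the complement. Similarly, by Proposition \ref{prop existence of Morse functions} the Hamiltonian $h^{\nu_i}$ is pinned down on the neck, but can be chosen freely (subject to $\|h^{\nu_i}\|_{C^2}\leq \rho$) on $M'$. The two conditions are then generic in the Baire sense, and one shows their intersection is nonempty (hence dense).

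First, for condition (1) I would invoke the standard regularity result for fixed-point Floer cohomology in the $w^+$-monotone setting (essentially \cite[Theorem 9.1]{Sei97}, or the argument recalled in \cite[Theorem 7.3]{HS95} for small autonomous Hamiltonians). The key input is that any non-constant Floer strip, and any non-constant pseudo-holomorphic sphere, must exit any Weinstein neighborhood of $V$ in which $\omega$ is exact, so regularity can be obtained by perturbing $J$ off of this neighborhood. This gives a comeager subset $\mathcal{J}_1$ of admissible almost complex structures for which the full-strip moduli spaces are cut out transversely.

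For condition (2), I would set up the universal moduli space of parametrized half-strip solutions of Eq.\ \ref{eq floer perturbed symplectomorphism half strips} fibering over the parameter space of admissible $(J,h)$, and linearize the Cauchy--Riemann--Floer operator along a solution with Lagrangian boundary condition on $V$. Fredholm theory for strips with a Lagrangian boundary on one side and a Hamiltonian periodicity condition on the other gives the virtual dimension $k = \mu_{CZ}(x_+)$. To conclude surjectivity of the linearization at every half-strip, I would argue that any finite-energy half-strip has a somewhere-injective interior point lying outside the neck (using Lemma \ref{lem Lagrangian sphere is monotone}, monotonicity of $V$, and the same exactness argument as before to exclude solutions trapped in the Weinstein neighborhood), and then perturb $J$ at such a point in the standard way (cf.\ \cite[Remark 3.2.3]{McDS12}). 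This yields a comeager subset $\mathcal{J}_2$ on which all $\mathcal{M}^{\nu_i}_k(V,\widetilde{x_+})$ are smooth manifolds of the expected dimension $k$. Taking $J^{\nu_i} \in \mathcal{J}_1 \cap \mathcal{J}_2$ proves the lemma.

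The main obstacle will be the somewhere-injectivity step for half-strips, since the usual Floer argument is phrased for closed curves or full strips: one must check that no finite-energy half-strip can be entirely multiply covered. Here I would use the breakdown-and-descend argument of Oh, combined with the fact that the asymptotic orbit $x_+$ is non-degenerate and the boundary arc lies in $V$, to produce a somewhere-injective point lying in a region where $J$ is free to vary. Once this is in hand, the rest of the argument is a direct application of the standard universal moduli space / Sard--Smale machinery and poses no additional difficulty.
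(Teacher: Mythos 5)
Your proposal is correct and follows essentially the same route as the paper: condition (1) comes from the regularity already established for the full-strip theory with perturbations supported away from $V$, condition (2) comes from standard transversality for half-strips with boundary on $V$ (which the paper simply cites, in the form of \cite[Thm 4.1]{Alb05}, while you unpack the universal-moduli-space and somewhere-injectivity details), with both perturbation arguments justified by the fact that finite-energy solutions must leave the exact Weinstein neighborhood, and the conclusion obtained by intersecting the two comeager sets. No substantive difference from the paper's argument.
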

\begin{proof}
We already proved the existence of generic almost complex structure which are standard around $V$ and regular Hamiltonians that satisfies 1.  Standard results in Floer theory show that we can find almost complex structures standard on $V$ and regular Hamiltonians such that $\mathcal{M}^{\nu}_k(V,x_+)$ (compare with \cite[Thm 4.1]{Alb05} and subsequent remark) is a smooth manifold. We observe that we can tweak our almost complex structure far from $V$ since every $J^{\nu}$-holomorphic half-cylinder has to leave the chosen tubular neighborhood of the Lagrangian sphere.  Since the two sets of generic almost complex structures are comeager, they must intersect and hence we can use an almost complex structure $J^{\nu}$ which ensures that both 1. and 2. hold simultaneously.
\end{proof}
From now on we will always work with an almost complex structure $J^{\nu_i}$,  for the appropriate neck-stretch parameter $\nu_i$, coming from Lemma \ref{lem existence regular almost complex structure for definining c}.
\begin{defi}
For each neck-stretch parameter $\nu_i$ we define the cochain $c_{\nu_i}\in CF(\tau_{\nu_i}; \Lambda_{\omega})$ as follows:
\begin{equation*}
c_{\nu_i} = \sum_{\widetilde{x_+} \in \widetilde{\Omega_{\tau}}} \left\lvert \mathcal{M}^{\nu}_0(V,\widetilde{x_+}) \right\rvert_2 \cdot \widetilde{x_+}
\end{equation*}
where $\lvert \hspace*{.5mm} \cdot \hspace*{.5mm} \rvert_2$ denotes the cardinality of the set modulo $2$.
\end{defi}

\begin{lemma}\label{lem class c is a cycle} For each neck-stretch parameter $\nu_i$,  the class $c_{\nu_i}$ defined above is well-defined and it is a cycle,  i.e.  we have 
\[ \partial c_{\nu_i} = 0. \]
\end{lemma}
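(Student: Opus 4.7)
The proof of Lemma \ref{lem class c is a cycle} naturally splits into the well-definedness of $c_{\nu_i}$ as an element of $CF^\ast(\tau_p^{\nu_i}; \Lambda_\omega)$ and the vanishing of $\partial c_{\nu_i}$. For well-definedness, Lemma \ref{lem existence regular almost complex structure for definining c} makes $\mathcal{M}^{\nu_i}_0(V, \widetilde{x_+})$ a smooth $0$-manifold, and Gromov compactness together with the monotonicity of $M$ (used to rule out sphere bubbles below) implies that each such moduli space is compact and hence finite, so the count $\lvert \mathcal{M}^{\nu_i}_0(V, \widetilde{x_+}) \rvert_2$ is well-defined in $\Z_2$. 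The Novikov finiteness condition on the formal series defining $c_{\nu_i}$ is in fact trivially satisfied: the constraint $\mu_{CZ}(\widetilde{x_+}) = 0$, combined with the monotonicity of $V$ from Lemma \ref{lem Lagrangian sphere is monotone}, pins down the Maslov class of the covering and, via monotonicity, the action $\mathcal{A}_{h^{\nu_i}}(\widetilde{x_+})$ uniquely in terms of $x_+$. Since $\mathrm{Fix}(\tau_p^{\nu_i})$ is finite, only finitely many lifts $\widetilde{x_+}$ contribute nontrivially, and $c_{\nu_i}$ is in fact a finite sum.

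For $\partial c_{\nu_i} = 0$, the plan is to fix a lift $\widetilde{y}$ and study the Gromov compactification $\overline{\mathcal{M}^{\nu_i}_1(V, \widetilde{y})}$ of the $1$-dimensional moduli space of index-$1$ half-strips asymptotic to $\widetilde{y}$. Standard Gromov--Floer gluing identifies the codimension-$1$ boundary stratum coming from breaking at $r = +\infty$ with the disjoint union over intermediate $\widetilde{x_+}$ of $\mathcal{M}^{\nu_i}_0(V, \widetilde{x_+}) \times \big(\mathcal{M}^{\nu_i}_1(\widetilde{x_+}, \widetilde{y})/\R\big)$, whose mod-$2$ count is exactly the $\widetilde{y}$-coefficient of $\partial c_{\nu_i}$. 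Since $\overline{\mathcal{M}^{\nu_i}_1(V, \widetilde{y})}$ is a compact $1$-manifold, its boundary has an even number of points, and it only remains to rule out all other boundary degenerations.

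The remaining potential boundary phenomena are sphere bubbling at interior points, disk bubbling on $V$, and breaking at $r = 0$; the last is impossible since the half-strip carries a fixed boundary condition at $r = 0$ and no $r$-translation symmetry. Sphere bubbling is suppressed by the monotonicity of $M$: any non-constant bubble has $c_1 \geq 1$ and Maslov $\geq 2$, and a direct dimension count places the bubbled stratum in virtual dimension $-1$, hence empty for generic $J^{\nu_i}$. For disk bubbles on $V$, I would model the stratum as a fiber product over $V$ of a half-strip of index $1 - \mu$ (with one marked boundary point) with a Maslov-$\mu$ disk (with one marked boundary point); by Lemma \ref{lem Lagrangian sphere is monotone} the minimum Maslov number of $V$ is at least $2$, so the half-strip factor has negative index and is empty under the generic choice of Lemma \ref{lem existence regular almost complex structure for definining c}. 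The main technical obstacle lies precisely here when $M$ has minimum Chern number $1$, so that disk bubbles of Maslov exactly $2$ would ordinarily appear as codimension-$1$ boundary (the $m_0$-phenomenon of Lagrangian Floer cohomology); the argument then relies on checking that the regularity of Lemma \ref{lem existence regular almost complex structure for definining c} can be arranged simultaneously for the index-$1$ half-strip moduli and for the auxiliary negative-index half-strip moduli appearing in the bubbled strata, which is a standard but delicate Sard--Smale argument.
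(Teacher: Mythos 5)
Your proposal follows essentially the same route as the paper: well-definedness via compactness of the $0$-dimensional half-strip moduli spaces, and $\partial c_{\nu_i}=0$ by identifying the codimension-one boundary of the compactified $1$-dimensional moduli $\mathcal{M}^{\nu_i}_1(V,\widetilde{y})$ with broken configurations (index-$0$ half-strip plus index-$1$ strip) while excluding disk and sphere bubbling through the monotonicity of $V$ (Lemma \ref{lem Lagrangian sphere is monotone}) and the transversality of Lemma \ref{lem existence regular almost complex structure for definining c}. Your additional observations (finiteness of the sum from the action being pinned by the index via monotonicity, the explicit dimension counts, and the caveat about simultaneous regularity for the auxiliary negative-index moduli) only elaborate on points the paper treats implicitly, so the argument is correct and not a genuinely different approach.
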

\begin{proof}
Well-definedness in this context means that the $0$-dimensional manifold $\mathcal{M}^{\nu}_0(V,\widetilde{x_+})$ is compact for each $\widetilde{x_+} \in \widetilde{\Omega_{\tau}}$.  This is a consequence of Lemma \ref{lem Lagrangian sphere is monotone}.  By looking at the compactification of $\mathcal{M}^{\nu}_1(V,\widetilde{x_+})$,  the lack of disk or sphere bubbling (thanks to transversality and Lemma \ref{lem Lagrangian sphere is monotone}) implies that the boundary of the $1$-dimensional manifold is comprised of index $0$ half-cylinder together with an index $1$ $J^{\nu_i}$-holomorphic strip.  These elements represent exactly $\partial c_{\nu_i}$,  therefore we can conclude that $c_{\nu_i}$ is a cycle.
\end{proof}
\begin{lemma}\label{lem c is natural} Let 
\[\chi_{i,i+1} : CF^{\ast}\left(\tau_p^{\nu_i}; \Lambda_{\omega}\right) \to CF^{\ast}\left(\tau_p^{\nu_{i+1}}; \Lambda_{\omega}\right)\] 
be a continuation map associated with a regular path connecting $(J^{\nu_i},h^{\nu_i})$ to $(J^{\nu_{i+1}},h^{\nu_{i+1}})$. Then 
\[ {\chi_{i,i+1}}_*[c_{\nu_i}] = [c_{\nu_{i+1}}]\in HF^{\ast}\left(\tau_p^{\nu_{i+1}}; \Lambda_{\omega}\right).\]
\end{lemma}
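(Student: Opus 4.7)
The proof is a standard continuation-type argument adapted to the half-strip setting. Fix the regular homotopy $\{(J_s, h_s)\}_{s \in \R}$ of Floer data interpolating from $(J^{\nu_i}, h^{\nu_i})$ at $s = -\infty$ to $(J^{\nu_{i+1}}, h^{\nu_{i+1}})$ at $s = +\infty$ that defines $\chi_{i,i+1}$. For each parameter $T \geq 0$, build an $r$-dependent Floer datum on the half-plane $\R^+ \times \R$ by placing the $s$-interpolation in the window $r \in [T, T+1]$: the data equals $(J^{\nu_i}, h^{\nu_i})$ for $r \leq T$, smoothly interpolates through $(J_s, h_s)$ for $r \in [T, T+1]$, and equals $(J^{\nu_{i+1}}, h^{\nu_{i+1}})$ for $r \geq T+1$. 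Both endpoint data are standard near $V$ by construction, and their Hamiltonians agree there by Prop. \ref{prop existence of Morse functions}, so the Lagrangian boundary condition at $r = 0$ is well-posed uniformly in $T$. Define $\xi_T \in CF^*(\tau_p^{\nu_{i+1}}; \Lambda_\omega)$ by counting mod $2$ the index-$0$ half-strips for this data with prescribed asymptotic lift $\widetilde{y_+}$.

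For generic choices, the relevant moduli spaces are smooth manifolds of the expected dimension by the transversality of Lemma \ref{lem existence regular almost complex structure for definining c}. The boundary analysis of the index-$1$ moduli space proceeds exactly as in Lemma \ref{lem class c is a cycle}: monotonicity of $M$ and of $V$ (Lemma \ref{lem Lagrangian sphere is monotone}) together with dimension counts rule out disk and sphere bubbling, so the boundary consists only of two-level broken configurations, yielding $\partial^{\nu_{i+1}} \xi_T = 0$. A parametric cobordism in $T$ then gives $[\xi_{T_0}] = [\xi_{T_1}]$ in $HF^*(\tau_p^{\nu_{i+1}}; \Lambda_\omega)$ for all $T_0, T_1 \geq 0$.

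It remains to identify $[\xi_T]$ at the two extremes. As $T \to 0^+$, the same cobordism argument, extended by a further homotopy of the datum in the window $[0,1]$ back to the constant $\nu_{i+1}$ datum, gives $[\xi_0] = [c_{\nu_{i+1}}]$. As $T \to +\infty$, a gluing/breaking analysis (stretching the long region $[0,T]$ where the data is $(J^{\nu_i}, h^{\nu_i})$) identifies the limit of $\xi_T$ with the cochain counting broken configurations consisting of an index-$0$ half-strip for $(J^{\nu_i}, h^{\nu_i})$ from $V$ to some lifted fixed point $\widetilde{x_+}$, concatenated with an index-$0$ continuation strip from $\widetilde{x_+}$ to $\widetilde{y_+}$. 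The count of these broken configurations is exactly $\chi_{i,i+1}(c_{\nu_i})$, so $[\xi_\infty] = [\chi_{i,i+1}(c_{\nu_i})]$. Combining, $[c_{\nu_{i+1}}] = [\chi_{i,i+1}(c_{\nu_i})]$ as required.

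The main technical obstacle is the $T \to \infty$ gluing/breaking analysis: one must verify that every sequence of index-$0$ half-strips $u_{T_n}$ with $T_n \to \infty$ subconverges, after appropriate reparametrization of the long region, to a broken configuration as described, and that conversely every such broken configuration is the limit of a unique family. The compactness uses the intrinsic energy bounds from the index-$0$ condition together with monotonicity to control bubbling; the gluing step is a direct adaptation of the classical continuation gluing in Hamiltonian Floer theory (cf. \cite{Sch93}, \cite{Dju14}) to half-strips with Lagrangian boundary.
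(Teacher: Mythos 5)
Your proof is correct and follows essentially the same route as the paper: both establish the claim by (i) using gluing to express $\chi_{i,i+1}(c_{\nu_i})$ as a count of half-strips for a glued Floer datum, (ii) running a parametric cobordism to the $(J^{\nu_{i+1}},h^{\nu_{i+1}})$ datum, and (iii) ruling out bubbling on the boundary of the $1$-dimensional parametric moduli space via monotonicity (Lemma~\ref{lem Lagrangian sphere is monotone}) together with the transversality of Lemma~\ref{lem existence regular almost complex structure for definining c}. The only cosmetic difference is that your $T$-parametrized family packages the paper's two separate steps---gluing at the chain level, then a homotopy from $(\overline{J},\overline{h})$ to $(J^{\nu_{i+1}},h^{\nu_{i+1}})$---into a single one-parameter cobordism whose $T\to\infty$ end realizes the gluing identification and whose $T\to 0$ end yields $c_{\nu_{i+1}}$.
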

\begin{proof}
At the cochain level,  the cycle $\chi_{i,i+1}c_{\nu_i}$ is given by an appropriate count of the following con\-fi\-gu\-ra\-tions: 
\begin{center}
\includegraphics[scale=1]{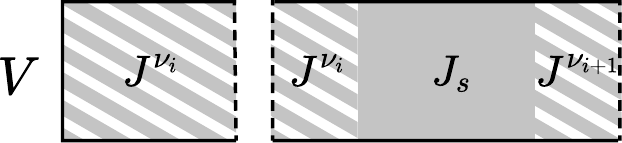} 
\end{center}
where the strips are the index $0$ strips defining the continuation map $\chi_{i,i+1}$. By the gluing theorem we can find gluing parameters such that the count of these configurations is equal to counting $\overline{J}$-holomorphic half-strips where $\overline{J}$ is the (generic) almost complex structure represented in the following picture:
\begin{center}
\includegraphics[scale=1]{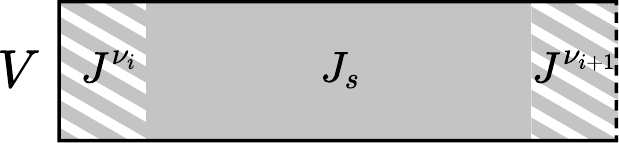} 
\end{center}
Similarly we obtain the regular Hamiltonian $\overline{h}$. Now in order to prove that ${\chi_{i,i+1}}_*[c_{\nu_i}]=[c_{\nu_{i+1}}]$ we fix a regular path 
\[\overline{J}_s : \left(\overline{J},  \overline{h} \right) \rightsquigarrow \left(J^{\nu_{i+1}},h^{\nu_{i+1}} \right).\]
For each $\widetilde{x_+} \in \widetilde{\Omega_{\tau}}$ consider the parametric moduli space:
\begin{center}
\begin{tikzcd} \mathcal{M}_1^{\text{par}}(V,\overline{J}_s,\widetilde{x_+}) \arrow{d}{\pi} & \\
\left[0,1\right] & 
\end{tikzcd}
\end{center}
by Lemma \ref{lem Lagrangian sphere is monotone}, there are no disk or sphere bubbling in this dimension. After taking the Gromov-Floer compactification,  its boundary consists of the following three pieces: 
\[\partial {\mathcal{M}_1^{\text{par}}(V,\overline{J}_s,\widetilde{x_+})}= - \mathcal{M}_0(V,\overline{J},\widetilde{x_+}) \cup \mathcal{M}_0(V,J^{\nu_{i+1}},\widetilde{x_+}) \cup \left( \text{ strip breaking at some values of the parameter } s\right).\] 
Notice that the first boundary component defines $-\chi_{i,i+1}c_{\nu_i}$, the second $c_{\nu_{i+1}}$ and the strip breaking instead amounts to $\partial(\beta)$ for some $\beta \in CF^*(\tau_p^{\nu_{i+1}},\Lambda_{\omega})$. Therefore when passing to cohomology the two cochains define the same class proving our claim.
\end{proof}
We now need to build suitable continuation maps which induces continuation maps for relative Floer cohomology, \textit{without} appealing to Theorem \ref{thm 3.4 Ono continuation maps preserves filtration} since we do not want to assume rationality of the symplectic class $[\omega]$. Following the discussion in \cite[Page 10]{Wen16}, this is still possible by working with an increasing family of Hamiltonians $\{\bar{h}^{\nu}\}_{\nu}$. The following Corollary of Prop. \ref{prop existence of Morse functions} ensures we can build such family.

\begin{cor}\label{cor we can make Hamiltonians bigger} Let $\{h^{\nu}\}_{\nu}$ be a family of Hamiltonians build in Prop. \ref{prop existence of Morse functions}, let $\{h^{\nu_i}\}_i$ be any countable subfamily of it. Since $M$ is compact,  there is a sequence of real numbers $\{d_i\}_i$ such that for each $i \in \N$ and each $x \in M$,  
\[ h^{\nu_i}(x)+d_i \leq h^{\nu_{i+1}}(x) + d_{i+1}\] 
In particular,  we can always assume that our countable family of Hamiltonians is increasing.  
The family \[ \overline{h^{\nu_i}}(x):=h^{\nu_i}(x)+d_i \]
is therefore an increasing family of Hamiltonians which satisfies property 2. of Prop. \ref{prop existence of Morse functions}, it restricts to a family of Morse functions on $M'$ and whose critical points of each $\overline{h^{\nu_i}}$ are independent of the index $i$. Up to some translation,  we can also assume that, for each index $l$,  \[ \overline{h^{\nu_i}}(V_l)=d_i.\]
\end{cor}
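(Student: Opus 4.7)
The plan is to construct the sequence $\{d_i\}_i$ by induction on $i$, using compactness of $M$ and the fact that adding a constant to a Hamiltonian is essentially invisible to the structures we care about.

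First, I set $d_0 := 0$ and define $d_{i+1}$ inductively. Since $M$ is compact and each $h^{\nu_j}$ is continuous, the quantity
\[ \Delta_i := \max_{x \in M}\bigl(h^{\nu_i}(x) - h^{\nu_{i+1}}(x)\bigr) \]
is a finite real number, so I may put $d_{i+1} := d_i + \Delta_i$. Then for every $x \in M$,
\[ \overline{h^{\nu_{i+1}}}(x) - \overline{h^{\nu_i}}(x) = d_{i+1} - d_i - \bigl(h^{\nu_i}(x) - h^{\nu_{i+1}}(x)\bigr) \geq 0, \]
which is the required monotonicity. One could replace $\Delta_i$ by $\Delta_i + 1$ to obtain strict monotonicity if desired.

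Next, I verify that the constant shifts preserve everything the Corollary claims. Since $d(h^{\nu_i} + d_i) = dh^{\nu_i}$, the Hamiltonian vector field, the gradient with respect to $g^{\nu_i}$, and the critical set are all unchanged. Consequently property 2 of Prop. \ref{prop existence of Morse functions} continues to hold for $\overline{h^{\nu_i}}$, and $\overline{h^{\nu_i}}|_{M'} = h' + d_i$ remains a Morse function on $M'$ with the same critical points as $h'$, independent of $i$.

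Finally, I address the normalization $\overline{h^{\nu_i}}(V_l) = d_i$, which amounts to $h^{\nu_i}(V_l) = 0$ for every $l$ and $i$. By construction in Prop. \ref{prop existence of Morse functions}, the gradient $\nabla_{\nu}h^{\nu}$ on $T^*_{\delta}(S^n)_l \setminus V_l$ is a positive multiple of $\sigma_l \eta_l$, which is radial in the Weinstein neighborhood and vanishes on the zero section $V_l$; integrating outward shows $h^{\nu}$ is constant along each $V_l$. The freedom in the construction of $h'$ (the free integration constants used when extending $h'$ across each Weinstein neighborhood, together with the freedom to shift the boundary values of $h'$ on each $\partial M'_l$) can be used to arrange that this constant is zero on every $V_l$ simultaneously. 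The main point requiring a little care is that a \emph{single} scalar shift $d_i$ must work for all $l$ at once, which I would handle by first renormalizing $h'$ so that for some convenient parameter $\nu_0$ all the values $h^{\nu_0}(V_l)$ coincide, and then absorbing the common value into the definition of $d_i$; once this is done, monotonicity of the $h^{\nu_i}(V_l)$ in $i$ is automatic from the shape of the radial profile forced by property 2, so the same $d_i$ normalizes every $V_l$ to $d_i$. This bookkeeping is the only mildly delicate part of an otherwise elementary argument.
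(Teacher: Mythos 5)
The paper states this corollary without an explicit proof, so I am checking the proposal on its own merits. Your inductive construction of $d_i$ via $\Delta_i := \max_{x\in M}\bigl(h^{\nu_i}(x)-h^{\nu_{i+1}}(x)\bigr)$ is exactly right, as is the observation that adding constants leaves $dh^{\nu_i}$, the gradient, the Hamiltonian vector field, and the critical set unchanged, so that property 2 of Prop.\ \ref{prop existence of Morse functions} and independence of critical points from $i$ follow immediately.

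The gap is in the final normalization step. You correctly reduce $\overline{h^{\nu_i}}(V_l)=d_i$ to $h^{\nu_i}(V_l)=0$ for every $i$ and $l$, but your proposed remedy --- renormalize $h'$ once so that $h^{\nu_0}(V_l)$ is a common constant, then appeal to "monotonicity of $h^{\nu_i}(V_l)$ in $i$" --- does not get you there. The value $h^{\nu_i}(V_l)$ genuinely depends on $i$: on the neck the gradient has constant $g^{\nu_i}$-magnitude $\rho$, while the geodesic radial length of the neck grows without bound as $\nu_i\to\infty$, so integrating the gradient from $\partial M'_l$ (where $h^{\nu_i}\equiv h'$ is fixed) inward to $V_l$ produces values that drift with $i$. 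A one-time renormalization of $h'$ therefore cannot force $h^{\nu_i}(V_l)=0$ for all $i$, and $d_i$ does not enter the condition $h^{\nu_i}(V_l)=0$ at all, so no clever choice of $d_i$ can rescue it. Monotonicity of $h^{\nu_i}(V_l)$ in $i$ (even if established) is simply not the statement you need. The intended reading of "up to some translation" is that one first replaces $h^{\nu_i}$ by the $i$-dependent translate $h^{\nu_i}-h^{\nu_i}(V_l)$ (which requires, and tacitly assumes, that $h^{\nu_i}(V_l)$ does not depend on $l$ --- automatic in the single-sphere case relevant to Theorem \ref{theorem vanishing c}), and only then runs your compactness argument to produce $d_i$. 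With that order of operations, $\overline{h^{\nu_i}}(V_l)=0+d_i=d_i$ holds by construction and your earlier verifications go through unchanged.
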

Thanks to the corollary, and \cite[Page 10]{Wen16}, by using the family of Hamiltonians $\{\overline{h^{\nu_i}}\}_i$, there are suitable continuation maps  \[\chi_{i,i+1} : CF^{\ast}\left(\tau_p^{\nu_i}; \Lambda_{\omega}\right) \to CF^{\ast}\left(\tau_p^{\nu_{i+1}}; \Lambda_{\omega}\right).\] 
which increase the value of the action functional of the image:
\[ \mathcal{A}_{\overline{h^{\nu_i}}}(\widetilde{x_+}) \leq \mathcal{A}_{\overline{h^{\nu_{i+1}}}}(\chi_{i,i+1}\widetilde{x_+}).\]
\noindent The last ingredient we need in order to prove Theorem \ref{theorem vanishing c} is the following adaption of Theorem \ref{thm lower bound on energy for strips} which we will not prove since the proof is \textit{mutatis mutandis} the same as the original one.
\begin{theorem}\label{thm lower bound on energy for half-cylinders} Let $\{\nu_i\}$ be a sequence of neck-stretch parameters, then there exists a sequence of positive real numbers $\{\lambda_i \}$ satisfying \[ \lim_{i\to+\infty} \lambda_i= +\infty,\] such that for every index $i$, every solution $u^i$ of Eq. \ref{eq floer perturbed symplectomorphism half strips} for the stretched manifold $(M,J^{\nu_i})$ with Hamiltonian function $\overline{h^{\nu_i}}$ has its energy bounded below by $\lambda_i$.
\end{theorem}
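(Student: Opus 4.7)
The plan is to reproduce the two-case argument used in Theorem \ref{thm lower bound on energy for strips}, checking at each step that the presence of a Lagrangian boundary condition at $r=0$ and the absence of a negative asymptotic end do not break anything. First I would observe that every half-strip $u^i$ solving Eq. \ref{eq floer perturbed symplectomorphism half strips} is automatically forced to intersect $\{0\}\times S(V)$: indeed $u^i(0,\cdot)\subset V\subset D(V)$, which lies on the side of the Weinstein neighborhood opposite to $M'$, while $x_+\in \text{Fix}(\tau_p^{\nu_i})\subset M'$ by Remark \ref{rem sign morse function} and Lemma \ref{lemma action functional coincides between neck parameters}. So the family $U_i:=\bigcup_{\widetilde{x_+}}\mathcal{M}^{\nu_i}_0(V,\widetilde{x_+})$ can be fed into the same machinery: define the energy spectrum $ED_i$, set $\overline{E}_i:=\inf ED_i$, and split into Case $1$ where $\overline{E}_i\to\infty$ (set $\lambda_i:=\overline{E}_i$ and conclude) and Case $2$ where one has a uniform upper bound $\overline{E}_i\leq\Upsilon$, which must be ruled out.

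In Case $2$, I would define $L_i(t)$ and $L_i\subset\R^+\times[0,1]$ exactly as before, and establish the uniform $L^{\infty}$-bound on $\restr{du^i}{L_i}$ via the bubbling analysis of \cite[Lemma 5.11]{BEHWZ03}: since $L_i\subset(-\varepsilon/2,\varepsilon/2)\times S(V)$ lies a uniformly positive geodesic distance away from the Lagrangian boundary $\{0\}\times\R$ (because $V\subset D(V)$ is separated from the neck by construction), any blow-up sequence $\{z_i\}$ sits in the interior of the domain, so disk bubbles on $V$ cannot occur and one recovers a non-constant $J$-holomorphic plane in $\R\times S(V)$. Sikorav's removable-singularity theorem extends it to a non-constant pseudo-holomorphic sphere in the symplectization, contradicting the exactness of $\omega$ there. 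Lemmas \ref{claim min distance is lambda}, \ref{cor Ji subset Ii} and \ref{claim Ji non empty} then transfer unchanged because they rely only on the twisted-periodicity $\tau_p^{\nu_i}\circ u^i(r,t+1)=u^i(r,t)$ and the geometry of the neck. For any $t$-slice intersecting $\{0\}\times S(V)$, the restricted curve still enters the neck across $\{\pm\varepsilon/2\}\times S(V)$ and reaches $\{0\}\times S(V)$, forcing $\mu(L_i(t))\geq R_i/(2C)$ (the factor of $2$ absorbed into the constant). Combining Cauchy--Schwarz with Fubini then yields
\[ E(u^i)\ \geq\ \int_{L_i(t_i)}\!\int_0^1 \bigl(\lvert du^i(r,s)\rvert_{g^{\nu_i}_s}^{op}\bigr)^2\, ds\,dr\ \geq\ \rho^2\,\mu(L_i(t_i))\ \geq\ \rho^2\,\frac{R_i}{2C}\ \longrightarrow\ +\infty, \]
which contradicts the uniform bound $E(u^i)\leq \Upsilon$ and completes the proof.

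The main potential obstacle, which is largely a bookkeeping issue rather than a conceptual one, is verifying that the bubbling analysis of \cite[Lemma 5.11]{BEHWZ03} genuinely applies on the half-plane $\R^+\times\R$: the uniform separation between $L_i$ and the Lagrangian boundary places any bubble point in the interior of the domain, ruling out disk bubbles with boundary on $V$ and leaving the sphere bubble as the only limiting configuration, which is then excluded by exactness. Once this is secured the rest of the argument is a direct transcription of the full-strip proof with $\R$ replaced by $\R^+$ in the domain of integration, which is presumably why the author omits the details.
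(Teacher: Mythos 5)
Your proposal is correct and takes exactly the route the paper intends: the paper omits the proof, declaring it \emph{mutatis mutandis} the same as Theorem \ref{thm lower bound on energy for strips}, and your adaptation (automatic intersection of every half-strip with $\{0\}\times S(V)$, the two-case dichotomy on $\overline{E}_i$, the gradient bound via bubbling in the symplectization, and the unchanged neck-crossing Lemmas \ref{claim min distance is lambda}--\ref{claim Ji non empty}) is precisely that argument. The one step to phrase a bit more carefully is the exclusion of boundary (disk) bubbles: the separation you invoke lives in the \emph{target} (blow-up images lie deep in the neck, at distance $d_i\to\infty$ from $M'\cup D(V)\supset V$), and it is transferred to the domain through the Hofer-lemma radius $\min(1,d_i/C_i)$, which forces the rescaling balls to avoid the boundary $\{0\}\times\R$ and yields a plane in $\R\times S(V)$, exactly as in the paper's interior analysis.
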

Let $\{d_i\}_i$ be the increasing sequence of numbers from Cor. \ref{cor we can make Hamiltonians bigger}. Without loss of generality, we can assume that the sequence of numbers $\{r_k\}_k$ from Prop. \ref{prop existence filtration with gap less than lambda_i} satisfies the additional condition:
\[ r_k < \lambda_k + d_k.\]
This is possible since the sequence $\{\lambda_k + d_k\}_k$ tends to $+\infty$. The set of critical values of $h^{\nu_i}$ is a discrete subset of $\R$ thanks to Lemma \ref{lemma action functional coincides between neck parameters}. Hence, for any $\nu_i$, the set of critical values of $\overline{h^{\nu_i}}$ is a translated copy of the one of $h^{\nu_i}$. Since a countable union of discrete subsets is countable, we can ensure that $\{r_k\}_k$ still avoids all the critical values.\\
We use this sequence $\{r_k\}_k$ to define an energy filtration on ${C_i}^{\bullet}$.
\begin{cor}\label{cor c-i lives in higher filtration steps} For each $i$,  
\[ c_{\nu_i} \in F^i{C_i}^{\bullet}.\]
\end{cor}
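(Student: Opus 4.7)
The plan is to verify, for every lift $\widetilde{x_+}$ appearing with non-zero coefficient in $c_{\nu_i}$, the inequality $\mathcal{A}_{\overline{h^{\nu_i}}}(\widetilde{x_+}) \geq r_i$, which is precisely the defining condition for membership in $F^i C_i^\bullet$. Such a $\widetilde{x_+}$ must come from at least one half-strip $u \in \mathcal{M}_0^{\nu_i}(V, \widetilde{x_+})$, and Lemma \ref{lem def c in the case dim M is 4} realises it as $[x_+, u \# W]$, where $W : [0,1]^2 \to V$ is a homotopy inside the Lagrangian sphere from the reference path $\gamma_0 \subset V$ to the boundary $u(0, \cdot)$. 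Plugging this capping into the action functional yields a natural three-term decomposition
\[
\mathcal{A}_{\overline{h^{\nu_i}}}(\widetilde{x_+}) = \int W^*\omega + \int u^*\omega + \int_0^1 \overline{h^{\nu_i}}(x_+(t))\, dt.
\]

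I would then bound each piece in turn. The first integral vanishes since $W$ maps into the Lagrangian sphere $V$. The second, which is the symplectic area of the half-strip, is bounded below by $\lambda_i$ thanks to Theorem \ref{thm lower bound on energy for half-cylinders}: its hypotheses are satisfied because Remark \ref{rem sign morse function} places the fixed point $x_+$ outside the neighbourhood of $V$ bounded by $\{0\}\times S(V)$, so any such half-strip is forced to traverse the stretched neck. For the Hamiltonian term, Corollary \ref{cor we can make Hamiltonians bigger} writes $\overline{h^{\nu_i}} = h^{\nu_i} + d_i$, and since $\|h^{\nu_i}\|_{C^2} \leq \rho$ by Proposition \ref{prop existence of Morse functions}, the integral lies in $[d_i-\rho,\, d_i+\rho]$.

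Combining these three bounds gives $\mathcal{A}_{\overline{h^{\nu_i}}}(\widetilde{x_+}) \geq \lambda_i + d_i - \rho$. Because $\rho$ is a fixed small constant and $\lambda_k + d_k \to +\infty$, we may (without loss of generality) replace the existing condition $r_k < \lambda_k + d_k$ with the slightly stronger $r_k + \rho \leq \lambda_k + d_k$; this again only amounts to selecting $\{r_k\}$ avoiding a countable set of forbidden values. With this adjustment $\mathcal{A}_{\overline{h^{\nu_i}}}(\widetilde{x_+}) \geq r_i$, proving $c_{\nu_i} \in F^i C_i^\bullet$. The only genuine input is the half-strip energy bound of Theorem \ref{thm lower bound on energy for half-cylinders}; everything else is algebraic bookkeeping, so I do not expect any significant obstacle in executing this plan.
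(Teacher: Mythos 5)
Your plan follows the same route as the paper: decompose the action of each generator $[x_+,\,u\#W]$ appearing in $c_{\nu_i}$, observe that the Lagrangian capping $W$ contributes nothing, invoke the energy lower bound from Theorem \ref{thm lower bound on energy for half-cylinders}, and account for the Hamiltonian term via Corollary \ref{cor we can make Hamiltonians bigger}. The one place you diverge is in the treatment of the middle term of your three-term decomposition. You assert that $\int u^*\omega \geq \lambda_i$, but Theorem \ref{thm lower bound on energy for half-cylinders} bounds the \emph{Floer energy} $E(u)$, not the raw symplectic area of the capping. For a half-strip solving Eq.~\ref{eq floer symplectomorphism half strips} one has the identity
\[
\int u^*\omega = E(u) - \int_0^1 \overline{h^{\nu_i}}(x_+(t))\,dt + \int_0^1 \overline{h^{\nu_i}}(u(0,t))\,dt = E(u) - \int_0^1 \overline{h^{\nu_i}}(x_+(t))\,dt + d_i,
\]
so $\int u^*\omega \geq \lambda_i - \rho$ rather than $\geq \lambda_i$. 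The paper sidesteps this by immediately writing $\mathcal{A}\bigl([x_+, u\#W]\bigr) = E(u) + \mathcal{A}\bigl([\gamma_0,\gamma_0]\bigr)$, i.e.\ the energy--action identity, which exploits the cancellation between the Hamiltonian correction to the area and the Hamiltonian term of the action functional; the result is the sharp bound $\lambda_i + d_i$ with no $\rho$ slack at all and hence no need to tighten the choice of $\{r_k\}$. Your argument still delivers the conclusion once you replace the condition $r_k < \lambda_k + d_k$ by $r_k + 2\rho \leq \lambda_k + d_k$ (with the more careful accounting you actually only get $\mathcal{A} \geq \lambda_i + d_i - 2\rho$, not $\lambda_i + d_i - \rho$), and since $\rho$ is a fixed constant this amendment is harmless; but notice that if you instead substitute the energy--area relation into your own decomposition, the Hamiltonian terms cancel exactly and you recover the paper's cleaner estimate.
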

\begin{proof}
Let $u^i$ be a $J^{\nu_i}$-holomorphic half-strip limiting to $x_+ \in \text{Crit}\left(\bar{h}^{\nu_i}\right)$.  Following the notation of Lemma \ref{lem def c in the case dim M is 4},  $[x_+, u^i \# W] \in c_{\nu_i} $.  By definition of the action functional \begin{align*}
\mathcal{A}_{\bar{h}^{\nu_i}}\left([x_+, u^i \# W] \right) &= E(u^i) + \mathcal{A}_{\bar{h}^{\nu_i}}\left([\gamma_0, \gamma_0] \right)\\
&\geq \lambda_i + \int_{[0,1]^2}\gamma_0^*\omega + \int_0^1 \bar{h}^{\nu_i}(\gamma_0(t))dt\\
&\geq \lambda_i + d_i,
\end{align*}
where we used the facts that for the reference path we took the "constant" disk at $\gamma_0$ and that by construction $\bar{h}^{\nu_i}(V)=d_i$.  By definition of $F^i{C_i}^{\bullet}$ we conclude.  
\end{proof}
Before proceeding with the actual proof of the theorem, let us remind the reader that unless some additional assumptions are made, continuation maps do not necessarily descends to maps between \textit{relative} Floer cohomology groups and even if they do (like in the case of increasing Hamiltonians, as in Cor. \ref{cor we can make Hamiltonians bigger}) they are not necessarily invertible (as maps between \textit{relative} Floer cohomology groups).
\begin{proof}[Proof of Theorem \ref{theorem vanishing c}]
Consider the sequence of classes 
\[[c] := \left([c_{\nu_0}],[c_{\nu_1}], \dots , [c_{\nu_i}], \dots \right).\]
Lemma \ref{lem c is natural} implies
\[[c] \in \varinjlim_{k \to -\infty} \varprojlim_{i \to +\infty} HF^*_{(k,i)}\left(\tau_p^{\nu_i}\right)\]
and Cor. \ref{cor c-i lives in higher filtration steps} shows that
\begin{equation}\label{eq c=0 in inv limit}
    [c]=0 \in \varinjlim_{k \to -\infty} \varprojlim_{i \to +\infty} HF^*_{(k,i)}\left(\tau_p^{\nu_i}\right)
\end{equation}
since we showed that each cochain $c_{\nu_i}\in CF(\tau_{\nu_i}; \Lambda_{\omega})$ lives in the step of the filtration that we are quotienting out when computing the \textit{relative} Floer cohomology groups $HF^*_{(k,i)}\left(\tau_p^{\nu_i}\right)$.\\
Let $\iota$ be the canonical map coming from the universal property of the inverse limit $\varprojlim_{i}$ composed with the isomorphism provided by our usual diagonal argument:
\begin{equation*}
\iota : \varinjlim_{k \to -\infty} \varprojlim_{i \to +\infty} HF^*_{(k,i)}\left(\tau_p^{\nu_i}\right)\cong \varinjlim_{k \to -\infty} \varprojlim_{i \to +\infty} \varprojlim_{j \to +\infty} HF^*_{(k,j)}\left(\tau_p^{\nu_i}\right) \to \varprojlim_{i \to +\infty}\varinjlim_{k \to -\infty} \varprojlim_{j \to +\infty} HF^*_{(k,j)}\left(\tau_p^{\nu_i}\right).
\end{equation*} 
By Prop. \ref{prop appendix isomorphism modified and standard floer cohomology} the map $\iota$ lands in $\varprojlim_{i}HF^*\left(\tau_p^{\nu_i}\right)$ and it easy to see that 
\begin{equation*}
    \iota\left([c]\right)=\left([c_{\nu_0}],[c_{\nu_1}], \dots , [c_{\nu_i}], \dots \right) \in \varprojlim_{i \to +\infty}HF^*\left(\tau_p^{\nu_i}\right).
\end{equation*}
Continuation maps in \textit{standard} Floer cohomology are isomorphisms, so by Lemma \ref{lem c is natural}
\begin{equation}\label{eq iota of c is c}
    \iota\left([c]\right)= [c_{\nu_0}] \in HF^*\left(\tau_p^{\nu_0}\right).
\end{equation}
Combining Eq. \ref{eq c=0 in inv limit} with Eq. \ref{eq iota of c is c} gives
\begin{equation*}
    0=\iota(0)=\iota\left([c]\right)= [c_{\nu_0}] \in HF^*\left(\tau_p^{\nu_0}\right)
\end{equation*}
which concludes the proof.
\end{proof}
 
\begin{rem} In the case of \textit{strongly} monotone manifolds, i.e.  symplectic manifold whose symplectic class is a multiple of the first Chern class,  the proof can be considerably simplified and the conclusion can be strengthened.  First of all, the strong monotonicity is carried over in the mapping torus of the Dehn twist thanks to a variation of Lemma \ref{lem symplectic form rational is rational in mapping torus}.  An analogous reasoning as in \cite[Lemma 9]{Sei02} implies a uniform bound on the energy of each index $k$ half-strips which is independent of the chosen almost complex structure.  This implies that for a big enough neck-stretch parameter the moduli space \[ \bigsqcup_{ \widetilde{x_+} \in c_{\nu_i}} \mathcal{M}^{\nu}_k(V,\widetilde{x_+}) \] has to be empty,  hence in general it must be null-cobordant.
\end{rem}
\newpage
\appendix

 \section{Modified Floer cohomology coincides with the standard one} \label{app:foobar}
In this appendix we give a brief description of the isomorphism between modified fixed point Floer cohomology and standard fixed point Floer cohomology. The argument is purely algebraic and was kindly explained to us by K. Ono in the Hamiltonian case.  We do not claim any originality in this reasoning and all the errors in translating it to our specific setting are ours. \\
We will suppress the index $i$ that keeps track of the almost complex structure $J^{\nu_i}$,  since it is irrelevant for our purposes here.  We start by briefly recalling the following definition and properties of the filtration.
\begin{defi} Let $F^k{C}^{\bullet}$ be the subcomplex of ${C}^{\bullet}:={C}^{\bullet}\left(\tau_p; \Lambda_{\omega}\right)$ defined as follows \[  F^k{C}^{\bullet} := \left\lbrace \sum \xi_{\tilde{x}}\cdot \tilde{x} \in {C}^{\bullet} \mid \xi_{\tilde{x}}=0 \text{ if } \mathcal{A}_{h}(\tilde{x})< r_k\right\rbrace.\]
\end{defi}
\noindent For $k\leq j$, we have $F^j{C}^{\bullet}\subseteq F^k{C}^{\bullet}$. 
\begin{lemma} The filtration defined above is complete and exhaustive.  In other words we have the following isomorphisms of $\Lambda_{\omega}$-vector spaces \begin{align*}
F^k{C}^{\bullet} &\cong \varprojlim_{j\to +\infty}F^k{C}^{\bullet}/F^j{C}^{\bullet} \\
{C}^{\bullet}&\cong \varinjlim_{k\to -\infty} F^k{C}^{\bullet}.
\end{align*}
\end{lemma}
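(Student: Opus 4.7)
The plan is to deduce both isomorphisms from the corresponding completeness and exhaustiveness properties of the Novikov field $\Lambda_{\omega}$ itself, which are built into its construction as the upward Novikov completion of $\Z_2[N]$ with respect to the valuation $\overline{\text{ev}_{\omega}}$.

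First, I would fix one preferred lift $\widetilde{x} \in \widetilde{\Omega_{\tau_p}}$ of each critical point $x$ of $\mathcal{A}_{h}$ and exploit the free, action-shifting action of $N$ on the Novikov cover to obtain, compatibly with the Novikov-type finiteness condition on chains recalled in the earlier remark, identifications of $\Lambda_{\omega}$-modules
\[
C^{\bullet} \;\cong\; \bigoplus_{x \in \text{Fix}(\tau_p)} \Lambda_{\omega} \cdot \widetilde{x},
\qquad
F^{k} C^{\bullet} \;\cong\; \bigoplus_{x \in \text{Fix}(\tau_p)} \Lambda^{k(\widetilde{x})}_{\omega} \cdot \widetilde{x},
\]
where the affine shift $k(\widetilde{x}) \in \Z$ records the action $\mathcal{A}_{h}(\widetilde{x})$ relative to the cutoff $r_{k}$, using that multiplication by $A \in N$ shifts the action by $\overline{\text{ev}_{\omega}}(A)$. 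Under this decomposition, both claimed isomorphisms split summand-by-summand, so it is enough to prove them in the rank-one case $C^{\bullet} = \Lambda_{\omega}$ with $F^{k}C^{\bullet} = \Lambda^{k}_{\omega}$.

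The summand-wise statements are then tautological consequences of the definition of $\Lambda_{\omega}$. Completeness $\Lambda^{k}_{\omega} \cong \varprojlim_{j \to +\infty} \Lambda^{k}_{\omega}/\Lambda^{j}_{\omega}$ amounts to saying that a Novikov series supported in $\{\overline{\text{ev}_{\omega}} \geq r_{k}\}$ is uniquely determined by, and can be reassembled from, its compatible family of truncations to the windows $[r_{k}, r_{j})$, which is precisely how an element of the upward completion is specified. Exhaustiveness $\Lambda_{\omega} \cong \varinjlim_{k \to -\infty} \Lambda^{k}_{\omega}$ records the fact that every Novikov series lies in $\Lambda^{k}_{\omega}$ as soon as $r_{k}$ is chosen below the bound witnessing the Novikov finiteness condition for that series. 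Following the same summand-wise bookkeeping for $C^{\bullet}$ then yields both isomorphisms of the lemma.

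The only point that deserves a careful word is the interaction between the direct sum $\bigoplus_{x}$ and the limits in the two statements: $\bigoplus$ commutes with $\varinjlim$ automatically, and for $\varprojlim_{j}$ one observes that on each fixed summand the inverse system is pro-stable in the sense that, together with the decomposition of $F^{k}C^{\bullet}$, it again gives a direct sum of summand-wise Novikov completions. Beyond this formal check there is no real obstacle; the geometric content of the problem (the action functional, the fixed points and the Novikov cover) has already been absorbed into the $\Lambda_{\omega}$-module structure, and the lemma is ultimately a restatement of the universal property of the upward Novikov completion.
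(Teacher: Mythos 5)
Your approach is essentially the paper's: the paper's proof is a one-liner deferring to the fact that completeness and exhaustiveness hold for $\Lambda_{\omega}$ itself, and you are simply spelling out why that suffices, by decomposing $C^{\bullet}$ as a direct sum of shifted copies of $\Lambda_{\omega}$ indexed by $\text{Fix}(\tau_p)$. The one place where your write-up is imprecise is the commutation of $\bigoplus_x$ with $\varprojlim_j$: the phrase ``pro-stable'' does not really apply here (the bonding maps in the tower $\Lambda^{k}_{\omega}/\Lambda^{j}_{\omega}$ are not eventually isomorphisms), and the actual reason the interchange is harmless is simply that $\text{Fix}(\tau_p)$ is \emph{finite} --- the perturbed symplectomorphism is nondegenerate on a compact manifold, so $C^{p}$ is finitely generated over $\Lambda_{\omega}$ (a fact the paper also uses in Lemma \ref{lem appendix technical}) --- and finite direct sums commute with all limits. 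With that substitution the argument is clean and matches the paper's intent.
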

Before stating and proving the main result of this appendix,  let us recall the following main technical lemma from \cite{FOOO10}.
\begin{lemma}\label{lem appendix technical} There exists $\delta >0$ such that \[ F^{\lambda}C^p \cap \partial \left(C^{p-1} \right) \subset \partial \left(F^{\lambda-\delta}C^{p-1} \right).\]
\end{lemma}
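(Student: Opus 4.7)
The plan rests squarely on the rationality assumption on $[\omega]$, which has already been invoked to force the spectrum of $\mathcal{A}_{h}$ to be a discrete subset of $\R$. More importantly, rationality forces every non-constant Floer trajectory contributing to $\partial$ to carry energy in a fixed positive lattice $\delta_0 \N_{>0}$ for some $\delta_0 > 0$. Consequently the differential strictly raises the action filtration by at least $\delta_0$:
\[
\partial F^{\mu} C^{\bullet} \subseteq F^{\mu + \delta_0} C^{\bullet + 1} \qquad \text{for all } \mu \in \R.
\]
The natural candidate for the constant is $\delta := \delta_0$ (or any strictly smaller positive number).

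Given $z = \partial y \in F^{\lambda} C^p$ with $y \in C^{p-1}$, I would first split $y = y_H + y_L$ according to the projections onto generators of action $\geq \lambda - \delta$ and $< \lambda - \delta$ respectively. The energy-gap property yields $\partial y_H \in F^{\lambda - \delta + \delta_0} C^p = F^{\lambda} C^p$, and subtracting from $z$ gives $\partial y_L = z - \partial y_H \in F^{\lambda} C^p$ as well. What remains is to absorb $y_L$ into a cochain whose action is $\geq \lambda - \delta$ without altering its boundary.

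Thanks to rationality, $y_L$ decomposes as a \emph{discrete} sum $y_L = \sum_\alpha (y_L)_\alpha$ indexed by its action levels, with a well-defined minimum $\alpha_0 < \lambda - \delta$. The condition $\partial y_L \in F^\lambda C^p$ forces the component of $\partial y_L$ at every action level in $[\alpha_0 + \delta_0, \lambda)$ to vanish; at the bottom level $\alpha_0 + \delta_0$ only $(y_L)_{\alpha_0}$ can contribute, yielding an algebraic relation in the associated graded complex that permits $(y_L)_{\alpha_0}$ to be traded for higher-action terms without disturbing $\partial y_L$. Iterating, the minimum action of the leftover ``low'' part strictly rises by at least $\delta_0$ at each step, so by completeness of the filtration we obtain a Cauchy sequence whose limit $y' \in F^{\lambda-\delta} C^{p-1}$ satisfies $\partial y' = z$.

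The main obstacle I expect is the algebraic peel-off step, which requires careful bookkeeping of the Novikov coefficients of generators under $\partial$ in order to guarantee that swapping $(y_L)_{\alpha_0}$ against higher-action pieces really produces an element of $C^{p-1}$ (rather than a formal series failing the support condition of $\Lambda_\omega$) and that the limit taken in the complete filtered complex is well-defined. The finiteness of $C^\bullet$ as a $\Lambda_\omega$-vector space, the discreteness of the action spectrum, and the uniform gap $\delta_0$ are precisely what make this combinatorial bookkeeping tractable and justify the convergence of the iteration.
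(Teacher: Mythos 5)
Your approach is genuinely different from the paper's: the paper simply invokes \cite[Prop.~6.3.9]{FOOO10}, whose content is a Smith-normal-form (Gaussian elimination) argument over the valuation ring, and the paper stresses that this requires $\Lambda_\omega$ to be a PID and each $C^p$ to be \emph{finitely generated over} $\Lambda_\omega$. Your proposal ignores the $\Lambda_\omega$-module structure entirely and tries a chain-level ``peel off the lowest action piece'' induction, and it is precisely the absence of the module-theoretic input that creates the gap.

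The fatal step is the assertion that $\partial_0 (y_L)_{\alpha_0} = 0$ ``permits $(y_L)_{\alpha_0}$ to be traded for higher-action terms without disturbing $\partial y_L$.'' To do this you need a $\partial$-cycle in $C^{p-1}$ whose leading term is $(y_L)_{\alpha_0}$, i.e.\ you need the $E_0$-cocycle to be closable to an honest cocycle. That does not follow from $\partial_0 (y_L)_{\alpha_0}=0$; the higher pages of the action spectral sequence obstruct it. A concrete obstruction: let $C^{p-1}=\Lambda_\omega\langle e\rangle$ and $C^p=\Lambda_\omega\langle f\rangle$ with both generators at action $0$, and suppose $\partial e = c\, f$ where $c\in\Lambda_\omega$ has valuation $2\delta_0$ (no energy-$\delta_0$ trajectories, only higher ones). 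Then $\partial_0 e=0$, yet $\ker\partial=0$ so there is \emph{no} cycle to subtract; your iteration halts after one step without producing a $y'$. The lemma is nonetheless true here with $\delta=2\delta_0$, but that constant is read off from the valuation of the elementary divisor $c$ of the matrix of $\partial$, not from the minimal energy gap $\delta_0$. This is exactly what the PID/finite-generation argument delivers and your chain-level induction does not: the required $\delta$ is controlled by the Smith normal form of $\partial$ over the valuation ring, which can involve valuations much larger than the minimal energy. You identify the ``peel-off'' as a bookkeeping obstacle, but it is a genuine mathematical one. As a secondary remark, your opening claim that rationality forces trajectory energies into a lattice $\delta_0\mathbb{N}_{>0}$ is also not quite right---the action spectrum is a finite union of cosets of a discrete subgroup, so differences have a positive minimum but need not lie in a common lattice---though this weaker statement is all you actually use.
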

\begin{proof}
See \cite[Prop.  6.3.9]{FOOO10}.  Here we need that the Novikov field $\Lambda_{\omega}$ is a PID and that for each $p$, $C^p$ is finitely generated.
\end{proof}

\begin{prop}\label{prop appendix isomorphism modified and standard floer cohomology} There is a canonical isomorphism of graded $\Lambda_{\omega}$-vector spaces \[ HF^{\ast}\left(\tau_p; \Lambda_{\omega} \right)\cong \widehat{HF^{\ast}}\left( \tau_p; \Lambda_{\omega}\right).\]
\end{prop}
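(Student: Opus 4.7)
The plan is to establish the canonical isomorphism via the chain
\[
HF^{\ast}(\tau_p; \Lambda_{\omega}) = H^{\ast}(C^{\bullet}) \cong \varinjlim_{k \to -\infty} H^{\ast}(F^k C^{\bullet}) \cong \varinjlim_{k \to -\infty} \varprojlim_{j \to +\infty} HF^{\ast}_{(k,j)}(\tau_p) = \widehat{HF^{\ast}}(\tau_p; \Lambda_{\omega}).
\]
The leftmost isomorphism is a formal consequence of the exhaustiveness $C^{\bullet} \cong \varinjlim_k F^k C^{\bullet}$ together with the exactness of filtered colimits in the category of $\Lambda_{\omega}$-vector spaces, which lets cohomology commute past $\varinjlim_k$. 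The substantive content is the middle isomorphism: for each fixed $k$, one must show that the natural comparison
\[
\Psi_k^p : H^p(F^k C^{\bullet}) \longrightarrow \varprojlim_{j \to +\infty} H^p(F^k C^{\bullet}/F^j C^{\bullet}) = \varprojlim_{j} HF^p_{(k,j)}(\tau_p)
\]
is an isomorphism.

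To prove bijectivity of $\Psi_k^p$ I would exploit completeness: $F^k C^{\bullet} \cong \varprojlim_j F^k C^{\bullet}/F^j C^{\bullet}$, with surjective transition maps, so the Milnor short exact sequence applies and reads
\[
0 \to \varprojlim_{j}{}^{1}\, H^{p-1}(F^k/F^j) \longrightarrow H^p(F^k C^{\bullet}) \xrightarrow{\Psi_k^p} \varprojlim_{j} H^p(F^k/F^j) \to 0.
\]
The question therefore reduces to showing that the inverse system $\{H^{p-1}(F^k/F^j)\}_j$ satisfies the Mittag--Leffler condition. This is precisely what Lemma \ref{lem appendix technical} supplies: given the uniform gap $\delta>0$ of that lemma and any pair of indices $j' > j$ with $r_{j'} \geq r_j + \delta$, every representative $x \in F^k C^{p-1}$ with $\partial x \in F^{j'} C^p$ satisfies $\partial x = \partial x'$ for some $x' \in F^{\, r_{j'}-\delta}\, C^{p-1} \subseteq F^j C^{p-1}$, so $x - x'$ is a genuine cocycle in $F^k C^{p-1}$ representing the same class as $x$ in $H^{p-1}(F^k/F^j)$. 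Hence for every $j$ the image of $H^{p-1}(F^k/F^{j'}) \to H^{p-1}(F^k/F^j)$ stabilises (equalling the image of $H^{p-1}(F^k C^{\bullet})$) for $j'$ sufficiently large, so Mittag--Leffler holds and $\varprojlim^{1}$ vanishes. Alternatively, the inverse of $\Psi_k^p$ can be built directly: given a coherent family $\{[x_j]\}_j$, iteratively adjust the representatives so that the differences $x_{j+1}-x_j$ lie in ever deeper filtration steps, and use completeness to extract a limit cocycle in $F^k C^p$.

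The main obstacle is the convergence control embodied in Lemma \ref{lem appendix technical}; the key point, inherited from \cite[Prop.~6.3.9]{FOOO10}, is that the constant $\delta$ depends only on the cochain degree $p$ and not on the filtration level, and this in turn relies on the Novikov field $\Lambda_{\omega}$ being a PID together with the finite generation of each $C^p$ over $\Lambda_{\omega}$. Once the isomorphisms $\Psi_k^p$ are in place, naturality with respect to the inclusions $F^k C^{\bullet} \supseteq F^{k+1} C^{\bullet}$ is automatic from the construction, so the two isomorphisms assemble into the canonical graded $\Lambda_{\omega}$-linear identification $HF^{\ast}(\tau_p; \Lambda_{\omega}) \cong \widehat{HF^{\ast}}(\tau_p; \Lambda_{\omega})$ asserted by the proposition.
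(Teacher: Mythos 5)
Your proposal is correct and reaches the same conclusion as the paper, but by a genuinely different homological route. The paper proves surjectivity and injectivity of the comparison map $H^p(F^k C^\bullet) \to \varprojlim_j H^p(F^k C^\bullet/F^j C^\bullet)$ by hand: surjectivity via an inductive telescope argument that successively corrects representatives so that the differences $\widetilde{\xi}_{l+1}-\widetilde{\xi}_l$ sit in ever-deeper filtration steps and then passes to the limit using completeness, and injectivity via a second inductive argument that uses Lemma \ref{lem appendix technical} to build a convergent sequence of primitives. You instead appeal to the standard $\varprojlim^1$ (Milnor) exact sequence for a tower of cochain complexes with surjective transitions, which disposes of surjectivity immediately, and then reduce injectivity to the Mittag--Leffler condition for the inverse system $\{H^{p-1}(F^k/F^j)\}_j$. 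Your verification of Mittag--Leffler is exactly where Lemma \ref{lem appendix technical} enters and it is correct: given $x \in F^k C^{p-1}$ with $\partial x \in F^{j'}C^p$, the lemma produces $x' \in F^{r_{j'}-\delta}C^{p-1} \subseteq F^j C^{p-1}$ with $\partial x = \partial x'$, so $[x] = [x-x']$ in $H^{p-1}(F^k/F^j)$ comes from $H^{p-1}(F^k C^\bullet)$, hence the images stabilise. The two arguments are logically equivalent --- the paper's inductive surjectivity proof is the proof of the Milnor sequence unwound, and its injectivity proof is the vanishing of $\varprojlim^1$ unwound --- but your version packages both steps using standard homological algebra, making the role of the two inputs (completeness of the filtration, and the uniform gap $\delta$ from Lemma \ref{lem appendix technical}) conceptually cleaner, at the cost of invoking the $\varprojlim^1$ machinery rather than remaining self-contained. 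You also correctly note, as a side remark, that the inverse of $\Psi_k^p$ can be built by the direct telescope argument, which is precisely what the paper does.
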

\begin{proof}
By definition \[ \widehat{HF^{\ast}}\left(\tau_p; \Lambda_{\omega}\right) := \varinjlim_{k\to -\infty} \varprojlim_{j\to +\infty}  H^{\ast}\left(F^k{C}^{\bullet}/F^j{C}^{\bullet} \right),\] hence we need to prove 
\[ HF^{\ast}\left(\tau_p; \Lambda_{\omega} \right)\cong\varinjlim_{k\to -\infty} \varprojlim_{j\to +\infty}  H^{\ast}\left(F^k{C}^{\bullet}/F^j{C}^{\bullet} \right).\] 
Exactness of the colimit functor implies that \[ HF^{\ast}\left(\tau_p; \Lambda_{\omega} \right)\cong\varinjlim_{k\to -\infty} H^{\ast}\left(F^k{C}^{\bullet}\right),\] so it suffices to show that the canonical map 
\begin{equation}\label{eq map into inverse limit floer cohomology}
H^{\ast}\left(F^k{C}^{\bullet}\right) \to \varprojlim_{j\to +\infty}H^{\ast}\left(F^k{C}^{\bullet}/F^j{C}^{\bullet}\right)
\end{equation}
induced by the chain maps $F^k{C}^{\bullet} \to F^k{C}^{\bullet}/F^j{C}^{\bullet}$ is an isomorphism.  We start by showing that the map in Eq. \ref{eq map into inverse limit floer cohomology} is surjective. Pick an increasing sequence $k=k_0 < k_1 < \cdots < k_i < \cdots $ such that $\lim_{i \to \infty} k_i = +\infty$.  For each index $i$,  let $[\xi_i] \in H^{\ast}\left(F^k{C}^{\bullet}/F^{k_i}{C}^{\bullet}\right)$ be a representative of an element in the inverse limit of Eq. \ref{eq map into inverse limit floer cohomology}. This means that 
\begin{itemize}
\item $\xi_i \in F^k{C}^{\bullet}$
\item $\partial \xi_i \in F^{k_i}{C}^{\bullet+1}$
\item the natural maps forming the inverse limits map $[\xi_{i+1}] \mapsto [\xi_i].$ 
\end{itemize}
By induction on $i$, we will replace $\xi_i$ with $\widetilde{\xi}_i \in F^k{C}^{\bullet}$ satisfying the following properties
\begin{itemize}
\item $[\xi_i]=[\widetilde{\xi}_i] \in H^{\ast}\left(F^k{C}^{\bullet}/F^{k_i}{C}^{\bullet}\right)$
\item $\partial \widetilde{\xi_i} = 0 \pmod{F^{k_i}{C}^{\bullet+1}}$
\item $\widetilde{\xi}_i=\widetilde{\xi}_{i-1} \pmod{F^{k_{i-1}}{C}^{\bullet}}.$
\end{itemize}
Set $\widetilde{\xi}_1:=\xi_1$. We proceed by induction and assume we constructed $\widetilde{\xi}_i$ for $i\leq l$.  Since \[[\xi_{l+1} \pmod{F^{k_l}C^{\bullet}}]=[\xi_l]=[\widetilde{\xi}_l],\] there exists $\zeta_l \in F^kC^{\bullet}$ such that \[ \xi_{l+1}-\widetilde{\xi}_l = \partial \zeta_l \pmod{F^{k_l}C^{\bullet}}.\] 
Set $\widetilde{\xi}_{l+1}:=\xi_{l+1}-\partial \zeta_l$. Notice 
\begin{align*}
\widetilde{\xi}_{l+1}-\widetilde{\xi}_{l} &=\xi_{l+1}-\partial \zeta_l-\widetilde{\xi}_{l} \pmod{F^{k_l}C^{\bullet}}\\ &=\partial \zeta_l-\partial \zeta_l \pmod{F^{k_l}C^{\bullet}}.
\end{align*}
Therefore $\widetilde{\xi}_{l+1}-\widetilde{\xi}_{l} \in F^{k_l}C^{\bullet}$.  It is easy to see that $\widetilde{\xi}_{l+1}$ enjoys the other properties we stated above.  By definition of the topology induced by the filtration,  the sequence $\{\widetilde{\xi}_i\}$ converges to an element $\widetilde{\xi}$ in $F^kC^{\bullet}$ such that $\partial \widetilde{\xi}=0$. Clearly $[\widetilde{\xi}] \in H^{\ast}\left(F^kC^{\bullet}\right)$ represents $[\xi_i] \in H^{\ast}\left(F^kC^{\bullet}/F^{k_i}C^{\bullet}\right)$.  Hence the map in \ref{eq map into inverse limit floer cohomology} is surjective.\\ We now prove injectivity. Suppose $\xi \in F^kC^{\bullet}$ is a cycle and for every $j \geq k$ there exist $\theta_j \in F^kC^{\bullet-1}$ such that 
\[ \xi = \partial \theta_j \pmod{F^jC^{\bullet}}.\] 
Pick an increasing sequence $k=k_0 < k_1 < \cdots < k_i < \cdots $ as before but with the additional property that for each index $i$, $k_{i+1}-k_i > \delta$, where $\delta>0$ is the constant coming from Lemma \ref{lem appendix technical}.  Set $\zeta_1 :=\theta_{k_1}$,  then $\xi - \partial \zeta_1 \in F^{k_1}C^{\bullet}$.  Observe that  \[[\xi]=[\xi - \partial \zeta_1] \in H^{\ast}\left(F^kC^{\bullet}/F^{k_2}C^{\bullet}\right) \]
implies 
 \[ \xi - \partial \zeta_1\in \partial \left( F^{k}C^{\bullet-1}\right)+ F^{k_2}C^{\bullet}.\]
By Lemma \ref{lem appendix technical},   
\[ \xi - \partial \zeta_1\in \partial \left( F^{k_1-\delta}C^{\bullet-1}\right)+ F^{k_2}C^{\bullet}\subset \partial \left( F^{k}C^{\bullet-1}\right)+ F^{k_2}C^{\bullet}, \] therefore there exists $\eta_1 \in F^{k}C^{\bullet-1}$ such that 
\[ \xi - \partial \zeta_1 = \partial \eta_1 \pmod{F^{k_2}C^{\bullet}}.\] 
Set $\zeta_2 := \zeta_1 + \eta_1$,  then 
\begin{itemize}
\item $\xi= \partial \zeta_2 \pmod{F^{k_2}C^{\bullet}}$
\item $ \zeta_2-\zeta_1 \in F^{k_0}C^{\bullet-1}$.
\end{itemize}
We proceed by induction now.  Assume there are $\zeta_1, \dots , \zeta_l$ such that 
\begin{itemize}
\item $\xi = \partial \zeta_i \pmod{F^{k_i}C^{\bullet}}$
\item $\zeta_i-\zeta_{i-1} \in F^{k_{i-2}}C^{\bullet}$ for $i=2, \dots , l$. 
\end{itemize}
We want to construct $\zeta_{l+1}$. As before 
\[ \xi - \partial \zeta_l \in \partial \left( F^{k}C^{\bullet-1}\right)+ F^{k_{l+1}}C^{\bullet}\] which, together with Lemma \ref{lem appendix technical}, implies there exists $\eta_l \in F^{k_{l-1}}C^{\bullet-1}$ such that \[ \xi - \partial \zeta_l = \partial \eta_l \pmod{F^{k_{l+1}}C^{\bullet}}.\] 
Set $\zeta_{l+1} := \zeta_l + \eta_l$.  We easily see that it satisfies the required properties, hence by induction we obtain a sequence $\{\zeta_l\}$ which is convergent to an element 
\[\zeta \in \varprojlim_{l\to +\infty}F^k{C}^{\bullet-1}/F^{k_l}{C}^{\bullet-1} \cong F^k{C}^{\bullet-1}. \] Then for any index $l$,  \[ \xi =\partial \zeta \pmod{F^{k_l}{C}^{\bullet}},\] which implies $\xi = \partial \zeta$ in $F^k{C}^{\bullet}$ concluding the proof. 
\end{proof}

\newpage
\bibliographystyle{alpha}
\bibliography{bibliography}
\end{document}